\DeclareSymbolFontAlphabet{\mathbbm}{bbold}
\DeclareSymbolFontAlphabet{\mathbb}{AMSb} 
\newlength\bshft
\def\fakebold#1{\ThisStyle{\ooalign{$\SavedStyle#1$\cr%
  \kern-\bshft$\SavedStyle#1$\cr%
  \kern\bshft$\SavedStyle#1$}}}
\newtheorem{definition}{Definition}[section]
\newtheorem{theorem}[definition]{Theorem}
\newtheorem*{theorem*}{Theorem}
\newtheorem{lemma}[definition]{Lemma}
\newtheorem{proposition}[definition]{Proposition}
\newtheorem{corollary}[definition]{Corollary}
\newtheorem{remark}[definition]{Remark}
\newtheorem{example}[definition]{Example}
\newtheorem{model}[definition]{Model}
\renewcommand{\theequation}{\arabic{section}.\arabic{equation}}
\numberwithin{equation}{section}
\numberwithin{table}{section}
\numberwithin{figure}{section}
\newcommand{\wen}{\color{black}}
\newcommand{\ma}[1]{\textcolor{magenta}{#1}}
\newcommand{\C}{\mathbb{C}}
\newcommand{\N}{\mathbb{N}}
\newcommand{\R}{\mathbb{R}}
\newcommand{\Z}{\mathbb{Z}}
\newcommand{\B}{\mathcal{B}}
\newcommand{\eps}{\varepsilon}
\newcommand{\E}{\mathbb{E}}
\renewcommand{\Pr}{P}
\renewcommand{\d}{ \mathrm{d}}	
\bmdefine\h{h}
\bmdefine\P{P}     
\bmdefine\r{r}
\bmdefine\u{u} 
\bmdefine\v{v} 
\bmdefine\x{x}   
\bmdefine\y{y}
\bmdefine\z{z}
\bmdefine\w{w} 
\bmdefine\balpha{\alpha} 
\bmdefine\kap{\kappa}
\bmdefine\btheta{\theta} 
\bmdefine\bxi{\xi}  
\bmdefine\bom{\omega}
\newcommand{\nkap}{\|\kap\|}
\newcommand{\Pk}{\P(\kap)}
\newcommand{\Pt} {\P(\btheta)}
\bmdefine\bzeta{\zeta}
\bmdefine\etab{\eta}
\bmdefine\bzeta{\zeta}
\bmdefine\bphi{\varphi}
\bmdefine\Z{\mathcal Z}
\bmdefine\bZ{Z}
\newcommand{\xalt}{\tilde{\x}}
\newcommand{\talt}{\tilde{t}}
\newcommand{\xbb}{\mathbbm{x}}
\newcommand{\xbbalt}{\tilde{\mathbbm{x}}}
\newcommand{\tbb}{\mathbbm{t}}
\newcommand{\tbbalt}{\tilde{\mathbbm{t}}}
\newcommand{\fxbb}{\mathbbm{f}^2}
\newcommand{\fxsbb}{\mathbbm{f}}
\newcommand{\etabb}{\bbeta}
\newcommand{\etad}{\bigl(\tfrac{\d}{\d s} \eta\bigr)}
\newcommand{\U}{{U}}
\newcommand{\Pbb}{\mathbbm{P}}
\newcommand{\wrt}{w.r.t.\ }
\newcommand{\Wlog}{w.l.o.g.\ }
\newcommand{\rf}{\bm{w}}
\newcommand{\bdot}{\,\bm{\cdot}\,}
\newcommand{\trace}{\mathrm{tr\,}}
\newcommand{\ind}{\mathds{1}}
\newcommand{\indicator}[1]{\mathbbm{1}_{\smash{#1}}}
\newcommand{\st}{\sigma_{\mathrm{t}}}
\newcommand{\su}{\sigma_{\mathrm{u}}}
\newcommand{\sx}{\sigma_{\mathrm{x\;\!}}}
\newcommand{\sz}{\sigma_{\mathrm{z}}}
\newcommand{\ft}{f^{\phantom{*}}_{\mathrm{t}}\!}
\newcommand{\fts}{f^{1/2}_{\mathrm{t}}\!}
\newcommand{\fx}{f^{\phantom{*}}_{\mathrm{x}}\!}
\newcommand{\fxs}{f^{1/2}_{\mathrm{x}}\!}
\newcommand{\wnr}{\xi}
\newcommand{\wn}{\bm{\xi}}
\newcommand{\flow}{\bphi}
\newcommand*\wt[1]{\mathpalette\wthelper{#1}}
\newcommand*\wthelper[2]{%
        \hbox{\dimen@\accentfontxheight#1%
                \accentfontxheight#11.2\dimen@
                $\m@th#1\widetilde{#2}$%
                \accentfontxheight#1\dimen@
        }%
}
\newcommand*\accentfontxheight[1]{%
        \fontdimen5\ifx#1\displaystyle
                \textfont
        \else\ifx#1\textstyle
                \textfont
        \else\ifx#1\scriptstyle
                \scriptfont
        \else
                \scriptscriptfont
        \fi\fi\fi3
}
\newcommand*{\mint}[1]{%
  \mint@l{#1}{}%
}
\newcommand*{\mint@l}[2]{%
  \@ifnextchar\limits{%
    \mint@l{#1}%
  }{%
    \@ifnextchar\nolimits{%
      \mint@l{#1}%
    }{%
      \@ifnextchar\displaylimits{%
        \mint@l{#1}%
      }{%
        \mint@s{#2}{#1}%
      }%
    }%
  }%
}
\newcommand*{\mint@s}[2]{%
  \@ifnextchar_{%
    \mint@sub{#1}{#2}%
  }{%
    \@ifnextchar^{%
      \mint@sup{#1}{#2}%
    }{%
      \mint@{#1}{#2}{}{}%
    }%
  }%
}
\def\mint@sub#1#2_#3{%
  \@ifnextchar^{%
    \mint@sub@sup{#1}{#2}{#3}%
  }{%
    \mint@{#1}{#2}{#3}{}%
  }%
}
\def\mint@sup#1#2^#3{%
  \@ifnextchar_{%
    \mint@sup@sub{#1}{#2}{#3}%
  }{%
    \mint@{#1}{#2}{}{#3}%
  }%
}
\def\mint@sub@sup#1#2#3^#4{%
  \mint@{#1}{#2}{#3}{#4}%
}
\def\mint@sup@sub#1#2#3_#4{%
  \mint@{#1}{#2}{#4}{#3}%
}
\newcommand*{\mint@}[4]{%
  \mathop{}%
  \mkern-\thinmuskip
  \mathchoice{%
    \mint@@{#1}{#2}{#3}{#4}%
        \displaystyle\textstyle\scriptstyle
  }{%
    \mint@@{#1}{#2}{#3}{#4}%
        \textstyle\scriptstyle\scriptstyle
  }{%
    \mint@@{#1}{#2}{#3}{#4}%
        \scriptstyle\scriptscriptstyle\scriptscriptstyle
  }{%
    \mint@@{#1}{#2}{#3}{#4}%
        \scriptscriptstyle\scriptscriptstyle\scriptscriptstyle
  }%
  \mkern-\thinmuskip
  \int#1%
  \ifx\\#3\\\else_{#3}\fi
  \ifx\\#4\\\else^{#4}\fi  
}
\newcommand*{\mint@@}[7]{%
  \begingroup
    \sbox0{$#5\int\m@th$}%
    \sbox2{$#5\int_{}\m@th$}%
    \dimen2=\wd0 %
    \let\mint@limits=#1\relax
    \ifx\mint@limits\relax
      \sbox4{$#5\int_{\kern1sp}^{\kern1sp}\m@th$}%
      \ifdim\wd4>\wd2 %
        \let\mint@limits=\nolimits
      \else
        \let\mint@limits=\limits
      \fi
    \fi
    \ifx\mint@limits\displaylimits
      \ifx#5\displaystyle
        \let\mint@limits=\limits
      \fi
    \fi
    \ifx\mint@limits\limits
      \sbox0{$#7#3\m@th$}%
      \sbox2{$#7#4\m@th$}%
      \ifdim\wd0>\dimen2 %
        \dimen2=\wd0 %
      \fi
      \ifdim\wd2>\dimen2 %
        \dimen2=\wd2 %
      \fi
    \fi
    \rlap{%
      $#5%
        \vcenter{%
          \hbox to\dimen2{%
            \hss
            $#6{#2}\m@th$%
            \hss
          }%
        }%
      $%
    }%
  \endgroup
}
\begin{document}

\title[]{Random field reconstruction of \\ 
inhomogeneous turbulence \\ 
Part I: Modeling and analysis}
\author[Antoni et al.]{Markus Antoni$^{1}$}
\author[]{Quinten K\"urpick$^{1}$}
\author[]{Felix Lindner$^{1}$}
\author[]{Nicole Marheineke$^{2}$}
\author[]{Raimund Wegener$^{3}$}

\date{\today\\
$^1$ Universit\"at Kassel, Institut f\"ur Mathematik, Heinrich-Plett-Str. 40, 
D-34132 Kassel, Germany\\
$^2$ Universit\"at Trier, Arbeitsgruppe Modellierung und Numerik, Universit\"atsring 15, D-54296 Trier, Germany\\
$3$ Petrusstr. 1, D-54292 Trier, Germany\\
}
\begin{abstract}

We develop and analyze a random field model for 
the reconstruction of turbulent velocity fluctuations from 
inhomogeneous characteristic flow quantities 
provided by RANS simulations that is accessible to both a rigorous analytical validation of the model properties 
and efficient numerical simulation.   
The model is fully continuous and based on an explicit representation formula in terms of stochastic integrals 
combining moving average and Fourier-type representations in time and space, respectively.   
The structure of the model is 
systematically 
derived from spectral representations of homogeneous fields 
by means of suitable stochastic integral transformations 
that ensure the preservation of consistency properties 
when progressing to the case of inhomogeneous flow characteristics.  
Moreover, we employ a two-scale approach that 
separates a macro scale related to the 
variations of the characteristic flow quantities
from a representative turbulence scale on which the fluctuations are modeled, 
allowing to assess the model properties by asymptotic analysis \wrt the scale ratio.  
In particular, a novel inhomogeneous ergodicity result establishes the 
recovery of the  inhomogeneous characteristic flow quantities 
by means of local averages of a single sample path in time and space.   
\end{abstract}
\maketitle
\noindent
{\sc Keywords.} 
Inhomogeneous turbulence, 
random velocity field, 
stochastic integral representation, 
multiscale model, 
asymptotic analysis, 
spatio-temporal ergodicity \\
{\sc AMS-Classification.} 60G60, 60H05, 76F55, 76M35







\section{Introduction}

The numerical simulation of turbulent velocity fields 
is 
ubiquitous 
in a broad range of scientific applications  
and has advanced significantly 
alongside 
the rapid growth of available computing power. 
While the feasibility of computationally involved 
approaches  based on direct numerical simulation,  
large eddy simulation,    
or machine learning 
is steadily increasing, 
the practical applicability of these techniques still has its limits. 
Statistical turbulence modeling and stochastic simulation methods 
involving 
synthetic 
random 
fields  
therefore 
remain relevant in various contexts. 
Examples include 
the generation of interface data 
and boundary conditions 
for hybrid methods \cite{SSST14, XYD22}, 
the superposition of 
given 
time-averaged flow characteristics by 
artificial spatio-temporal fluctuations 
\cite{Pol+20, Zwi+24},  
and the provision of 
local 
high-resolution information 
on dynamically varying subsets of the computational domain 
as required for 
particle or fiber dynamics \cite{HMRW13,Wie+19}.

In statistical turbulence modeling, the turbulent flow velocity $\u$ is considered as a random field in space and time, usually split into expected value  (mean velocity) $\overline{\u}$ and centered fluctuations $\u'$. 
The fluctuations are characterized by the turbulent kinetic energy $k$ 
and possible anisotropic components of the Reynolds stresses, 
the dissipation rate $\eps$ of turbulent kinetic energy, and the kinematic viscosity $\nu$ of the fluid, among others.
%
%
These characteristic fields and the mean velocity 
are typically non-uniform and 
may be provided by statistical turbulence models based on the 
Reynolds-averaged Navier-Stokes equations (RANS) with constitutive laws,
such as $k$-$\omega$ models, $k$-$\eps$ models 
or Reynolds stress models \cite{Pope00, Wil06, SC22}. 
In applications depending on the fine structure of the flow,  
it is of interest to additionally model the actual turbulent velocity fluctuations $\u'$. 
This has 
led to the development of a variety of 
methods for the 
reconstruction of the fluctuations as a 
synthetic random field 
depending on 
given inhomogeneous flow characterstics. 
%
However, existing 
random field models for inhomogeneous turbulence 
are usually formulated in a discrete framework 
with regard to, e.g., spectral space and random inputs,
which sometimes comes along with a limited analytical foundation 
of the desired model properties, as detailed below. 

The aim of this article is to develop and analyze a flexible random field model for 
the reconstruction of turbulent velocity fluctuations from 
RANS-type data 
that is accessible to a rigorous analytical validation of the model properties 
and can be simulated efficiently. 
Accessibility to analytical investigations
facilitates not only conceptual clarity and novel analytical insights, 
but also the establishment of improved features of the model 
and the avoidance of inconsistencies. 
It 
is achieved through two key factors: 
First, the developed model is fully continuous and based on 
an explicit representation formula in terms of stochastic Fourier-type integrals, 
enabling the use of helpful tools from stochastic analysis. 
This 
allows to separate modeling and analysis on the one hand from 
numerical discretization and simulation on the other hand,   
so that the latter can be treated subsequently 
and 
boils down 
to a quadrature problem for stochastic integrals,  
addressed in the accompanying article \cite{AKLMW25}. 
Second, we employ a two-scale approach based on a suitable non-dimensionalization 
that separates a macro scale related to the variations of the characteristic flow quantities 
from a 
representative 
turbulence scale on which the fluctuations are modeled. 
It allows to assess the inhomogeneous model properties by asymptotic analysis \wrt the scale ratio. 
An exemplary 
simulation result illustrating 
our 
model is presented in Figure~\ref{fig:intro}. 
In the remainder of this introductory section we first discuss the relevant literature 
before describing 
the model and our contributions to the field in more detail.

\begin{figure}
\centerline{\includegraphics[scale=.51]{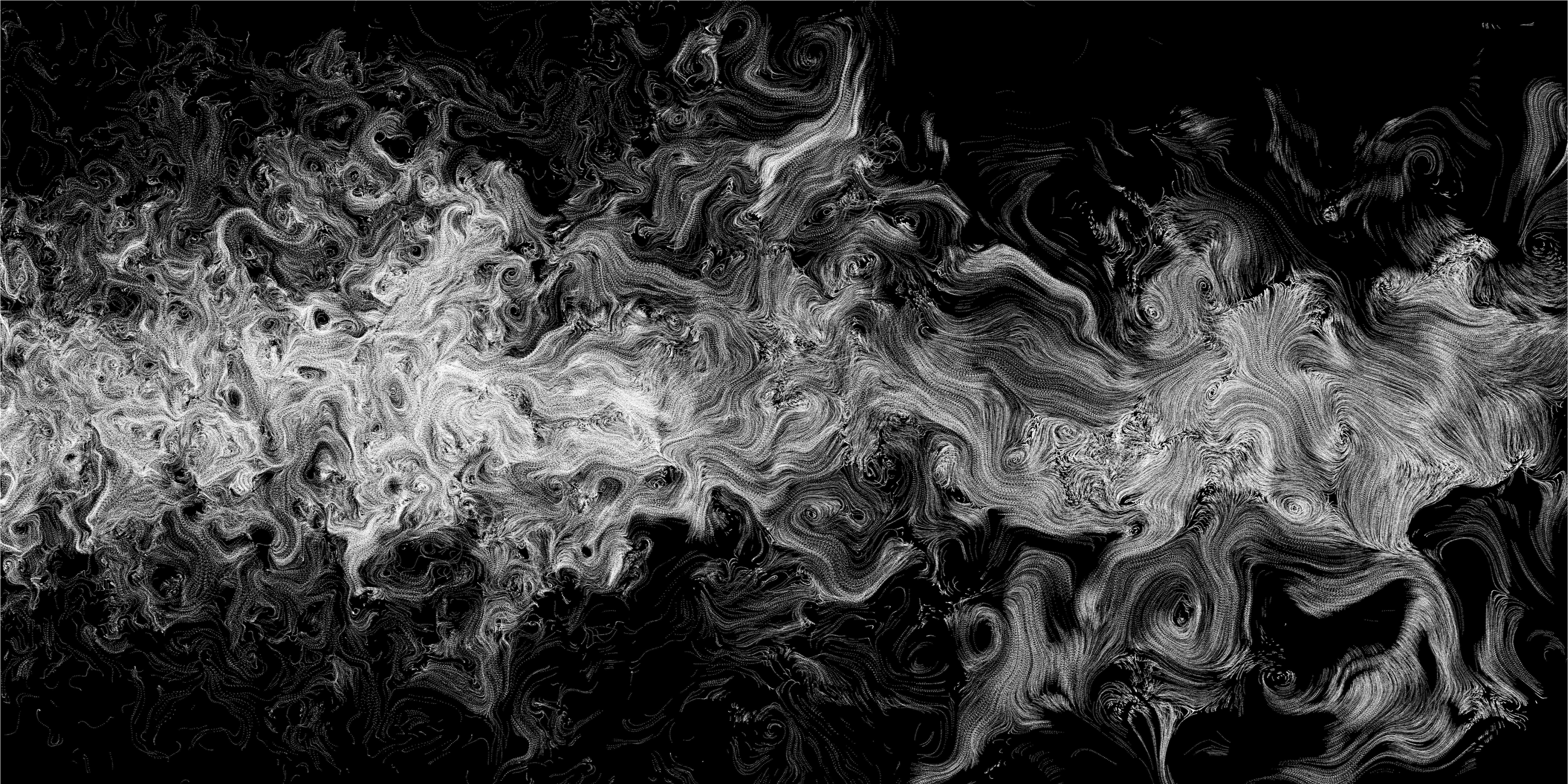}}
\caption{
	Snapshot of 
	a simulated inhomogeneous turbulence field, 
	visualized 
	by means of 
	the intersection points of 
	$30\,000$ 
	streamlines 
	and a thin rectangular domain 
	of proportion $\text{width}:\text{height}:\text{depth}=1:0.5:0.01$. 
	The streamlines are 
	started at 
	random initial positions 
	in a box of proportion $1:0.1:0.5$. 
	The underlying turbulence length scale 
	$k^{3/2}/\eps$ 
	increases from left to right. 
	} 
\label{fig:intro} 
\end{figure}

\subsection{Literature background}

The classical spectral decomposition of homogeneous random fields 
in terms of Fourier integrals \wrt stochastic orthogonal measures 
\cite{GS74,Bre14} 
%
provides a well-established analytical framework and natural representation formulas 
for
the idealized case of homogeneous turbulence  \cite{Bat53,MY75,Yag87}. 
%
Although 
Fourier-type stochastic integral representations for 
stationary stochastic processes and homogeneous random fields 
have been generalized to cover evolutionary spectra and 
certain classes of nonstationary processes and inhomogeneous fields  \cite{Pri65, PT73, Pri88, DS89, LCS07}, 
applications 
of such generalizations 
to the continuous description of inhomogeneous turbulence 
have been of limited extent and 
not related to RANS simulations. 
Examples include 
the use of stochastic integral representations as a starting point for the development of simulation algorithms for 
inhomogeneous scalar-valued
wind fields in one and two spatial dimensions
\cite{Pen+17, SCPSL18, THWZ23} 
or as a modeling tool for three-dimensional atmospheric turbulence with 
an inhomogeneity in 
the vertical direction \cite{Mann94}, compare also \cite{PS95}. 
%
%
By contrast, there exists a large variety of inhomogeneous random field models 
formulated in partially discrete settings, aiming at 
the reconstruction of turbulent velocity fluctuations from 
non-uniform 
characteristic flow quantities 
such as turbulent kinetic energy, dissipation rate, and Reynolds stresses 
that may be provided in the context of RANS computations. 
%
%
%
%
Arguably the largest class of such models is constituted by 
synthetic turbulence fields involving the summation of a finite number of random Fourier modes -- 
an approach initiated 
in 
\cite{Kra70} to investigate single-particle diffusion in 
homogeneous velocity fields. 
Early adaptations to inhomogeneous scenarios 
regarding  
aerosol particle dispersion and aeroacoustics 
can be found in \cite{LA92,LA93,Li+94} and \cite{BBLC94}, respectively. 
%
The 
first-mentioned articles 
restrict inhomogeneity to a non-uniform scaling in velocity only but allow for a specific form of Reynolds stress anisotropy, 
whereas the 
last-mentioned work 
includes non-uniform length and times scales as well 
but 
assumes isotropic one-point correlations.  
The approach has been subsequently extended and modified \wrt various 
aspects such as 
a refined 
treatment of anisotropy 
\cite{SSC01, BGC04, BEDJ04, HLW10, SSST14}  
and incompressibility properties in the presence of inhomogeneity and anisotropy 
\cite{YB14, PR18, ADDK21, ADDK22, GJYZ23}.  
A common conceptual simplification in these works 
is the 
assumption that the random fluctuation field is Gaussian, 
which is 
often not formulated explicitly but implied 
in an approximate sense 
by virtue of the central limit theorem. 
While 
the assumption of Gaussianity 
reflects a pragmatic focus on 
second-order statistics and is adopted in the present article, 
we remark that 
it 
disregards the skewness of small scale velocity increments and related aspects 
such as intermittency and vortex streching 
that have been 
partially 
included in refined non-Gaussian 
models for homogeneous 
fluctuations; 
see, e.g.,  
\cite{Fung+92, 
RM08, 
Laf+14,
PGC16,
VS21, 
FMSWP22}. 
%
Further approaches to 
random field modeling of inhomogeneous turbulence 
besides the random Fourier method,   
such as 
the synthetic 
eddy method 
\cite{Jar+09, Pol+13, KK18, Sch+22} 
or 
moving average techniques (also referred to as 
digital filtering) 
\cite{Ewe07, XC08, CIA12, KCX13},  
typically involve partially discrete settings as well. 
To streamline the  discussion  
we mainly focus on Fourier-based methods,  
as they offer a particularly intrinsic connection to 
classical turbulence theory as well as 
flexibel, grid-independent local evaluations 
and a convenient adaptability to high-resolution scenarios.

Despite the various advancements in random field modeling of inhomogeneous turbulence in the past few decades, 
there remain practically relevant challenges 
and opportunities for improvement, 
even within the simplified Gaussian setting. 
To begin with, 
an essential aspect that tends to be 
neglected 
is the consistency of the modeled fluctuations 
with prescribed flow characteristics involving  
the velocity gradient, such as the dissipation rate. 
The latter is usually 
considered 
only indirectly and abstractly 
as a model parameter, 
but the question whether 
its prescribed local value can be represented 
in terms of the spatial derivatives of  
the constructed inhomogeneous fluctuation field 
is not examined; 
see, e.g., 
\cite{BBLC94, BJ99, BED03, SSST14, ADDK20, PBJ21}. 
%
Addressing this question requires a 
careful 
regularity analysis, 
and its nontriviality  is 
illustrated 
by the fact that a systematic investigation 
of the velocity gradient 
reveals 
a further issue that 
concerns 
a variety of available models, 
namely a lack of invariance \wrt the underlying coordinate frame  
in the sense that the model properties may significantly depend on the choice of the origin. 
This typically occurs 
when an inhomogeneous 
characteristic flow quantity 
influences the scaling 
in the argument of the Fourier mode itself as opposed to the 
Fourier coefficient, compare  
\cite{SSC01, BGC02, BGC04, BED04, BEDJ04, HLW10, HMRW13, YB14, Kar+19, CLWH22, GJYZ23}. 
The resulting effect 
can be tracked analytically, e.g., by calculating the second moment of the velocity derivative,  
which is seen to depend on the difference vector of the considered evaluation point and the origin. 
While there are models that circumvent this problem, its avoidance sometimes 
comes at the cost of other desirable model properties 
such as exact incompressibility under homogeneous but anisotropic flow conditions \cite{SSST14,Bat+15,XYD22,Bos+23,Zwi+24}. 
%
As another aspect related to the dissipation rate, 
we remark that 
the adjustment of the employed model spectrum to the 
prescribed local flow characteristics 
commonly relies on asymptotic arguments for an infinite turbulence Reynolds number   
\cites[App.~A]{BJ99}[p.~3]{BED03} 
and is thus prone to overestimating the width of the inertial subrange. 
A further 
relevant feature consists in 
the advection of the turbulent structures by the mean flow, 
which is often either not addressed 
or restricted to 
a uniform convection velocity 
\cite{BBLC94, SSST14,  ADDK20, PBJ21, XYD22}, 
limiting the model applicability in 
basic scenarios such as free jets.  
Notable exceptions are 
given by 
the approach in \cite{BED03}, 
which  has been 
adapted, e.g., in  
\cite{BED04, CIA12, Bos+23, Zwi+24} 
and utilizes temporal decay  
to include non-uniform mean flow advection 
without distorting the locally prescribed correlation structure, 
by the alternative approach in \cite{Ewe07},  
which has been employed, e.g., in
 \cite{Ewe08, Ewe+11, Ewe16, Ewe+19}
and relies on spatial moving averages \wrt a nonlinearly advected white noise, 
and 
by 
the approach described in \cite{PR18, BPSS20}, 
which is based on 
the superposition of a finite number of homogeneous fields 
by means of spatially localizing weight functions.
%
%
However, the temporal evolution in \cite{BED03} is constructed as 
an autoregressive process in discrete time only, 
without specification of a continuous limit object 
as the discretization is refined. 
It turns out that the time series model converges 
to an Ornstein-Uhlenbeck process as the stepsize decreases, 
so that the limit model inherits the path irregularity of a 
Brownian motion, resulting in  
non-differentiability of the fluctuations 
that may lead to unphysical 
behaviour 
in high-resolution applications.  
The autoregressive construction from \cite{BED03} is included in \cite{Ewe07} as well, 
and the superposition approach in 
\cite{PR18, BPSS20} 
comes with the drawback 
that the weight functions alter 
the two-point correlations 
in such a way that the resulting compound field is not able to recover 
the special case of globally homogeneous turbulence. 
%
Finally, a further essential  question 
that has not been dealt with exhaustively 
in the context of inhomogeneous synthetic fluctuations 
concerns the validity of the ergodic hypothesis, 
lying at the heart of statistical turbulence theory 
and requiring that 
statistical flow properties represented in terms of ensemble averages 
are reflected in 
the respective 
averages of a single sample path in space and time \cite{MY71,Hin75}. 
It is primarily 
addressed 
in terms of numerical experiments and
with regard to temporal averages under stationary flow conditions only,  
but 
less in terms of a thorough mathematical analysis and 
not with regard to local spatial averages involving inhomogeneous directions 
or 
local temporal averages under nonstationary flow conditions 
\cite{SSC01, BGC02, CP13, PBJ21, GJYZ23}.  
%
%
In the overall view, 
the different listed aspects 
tend to be 
related to 
the semi-discrete nature of available models 
and 
to 
a missing separation of questions concerning modeling and analysis 
from questions concerning numerical discretization and simulation. 

\subsection{Contributions and structure of the article} 
\label{subsec:intro_contrib}

The random field model developed in this article addresses the 
challenges above 
within a fully continuous framework that is accessible to 
mathematical analysis 
as well as 
numerical discretization. 
Guided by the idea that the local fine structure of turbulent flows is nearly homogeneous and isotropic 
(Kolmogorov's hypothesis of local isotropy, \cite{Kol41, Kol62}), 
the derivation of the model 
builds on natural Fourier-type stochastic integral representations of homogeneous and isotropic fields, 
which are 
discussed  
in Section~\ref{sec:hom}. 
In order to enable a 
consistent and tractable 
description of 
both 
homogeneous local behaviour 
and 
inhomogeneous large-scale properties,  
we adopt a dimensionless view in combination with a two-scale approach 
as detailed in Section~\ref{sec:scaling}, 
capturing 
the fluctuation field $\u'(\x,t)$ from the perspective of a macro scale 
related to the (outer) geometry of the flow. 
%
%
The macro scale is complemented by a representative turbulence scale 
defined 
in terms of typical values $k_0$, $\eps_0$, and $\nu_0$ 
for turbulent kinetic energy, dissipation rate, and kinematic viscosity,  respectively. 
This turbulence scale serves as a 
basis for the local modeling of the fluctuations 
and plays the role of a micro scale 
in comparison to the macroscopic perspective, characterized by the 
corresponding 
scale ratio $\delta\ll1$. 
Inhomogeneous macroscopic 
variations 
of the 
characteristic flow quantities  
relative to 
the typical values $k_0$, $\eps_0$, $\nu_0$ 
are described 
by the dimensionless 
factors 
$k(\x,t)$, $\eps(\x,t)$, $\nu(\x,t)$.  
In Section~\ref{sec:further_aspects} we combine our scaling approach with 
the previously considered stochastic integral representations of homogeneous fields  
and prepare the transition to the inhomogeneous case by 
applying 
specific stochastic integral transformations in order to derive equivalent but suitably modified representation formulas 
for homogeneous fields 
in Lemmata~\ref{lem:transformation1}--\ref{lem:transformation3}. 
These integral transformations 
constitute a key step in 
the conceptualization of our model 
and provide a general technique to 
ensure the preservation of 
invariance properties 
as well as an adequate 
advection by the mean flow 
when 
progressing to
the case of non-uniform characteristic flow quantities. 
%
The inhomogeneous model resulting from these derivation steps is presented 
in Section~\ref{sec:inhom} and takes the form 
\begin{equation}\label{eq:u'_inhom_intro}
\begin{aligned}
\u'(\x,t)  = \su(\x,t)\:\Re \! & \int_{\R^+ \times S^2 \times \R}  
\Bigl(\frac1{\delta\st(\x,t)}\Bigr)^{1/2}\eta\Bigl(\frac{1}{\delta\st(\x,t)}(t-s)\Bigr) \exp\Bigl\{ i \frac{1}{\delta}\;\! \kappa \;\! \btheta \cdot \flow(s;\x,t) \Bigr\} 
\\ & \vphantom{\int} \hphantom{\int_{\R^+}\times}
\sx^{1/2}(\x,t)\;\! E^{1/2} \bigl(\sx(\x,t)\kappa; \sz(\x,t)z\bigr) \Pt \cdot 
 \bm{L}(\x,t) \cdot \wn(\d\kappa,\d\btheta,\d s),  
\end{aligned}
\end{equation}
combining a moving average representation in time with a Fourier-type representation in space;
compare Model~\ref{model:turb_inhom} and the 
comments thereafter. 
The stochastic integral in \eqref{eq:u'_inhom_intro} 
is $\C^3$-valued and taken \wrt a $\C^3$-valued Gaussian white noise $\bxi$  
on the product space $\R^+ \times S^2 \times \R$ accomodating all combinations of 
wave numbers $\kappa\in\R^+$, 
wave orientation vectors $\btheta\in S^2$, 
and values of the time integration variable $s\in\R$;  
%
the notation  $\Re$ 
stands for 
the real part.   
The time integration kernel 
$\eta(s)$ 
and the 
dimensionless 
energy spectrum function 
$E(\kappa;\zeta)$ 
model the 
temporal decay and the 
wave number distribution 
of the fluctuations 
from the perspective of 
local turbulence scales in time and space, respectively, 
where the second argument of $E$ 
indicates  
the inverse of the 
local 
turbulence Reynolds number, 
with typical value 
$z=\eps_0\nu_0/k_0^2\ll1$. 
The scaling factors 
$\sx(\x,t)=k^{3/2}(\x,t)/\eps(\x,t)$, $\st(\x,t)=k(\x,t)/\eps(\x,t)$, $\su(\x,t)=k^{1/2}(\x,t)$, and $\sz(\x,t)=\eps(\x,t)\nu(\x,t)/k^2(\x,t)$ 
describe the local 
macroscopic 
variations of 
the characteristic values of 
turbulent length, time, velocity, and inverse Reynolds number 
relative to the respective global reference values based on $k_0$, $\eps_0$, $\nu_0$. 
Non-uniform advection of the turbulent structures 
along the mean velocity field $\overline\u(\x,t)$ 
is captured via the mean flow function $\flow(s;\x,t)$, 
describing 
a mean flow pathline that passes through position $\x$ at time $t$. 
The 
distortion of the 
spatial coordinate 
system 
along these pathlines 
is of negligible effect 
due to the localizing influence of the 
time integration kernel $\eta$; 
see Figure~\ref{fig:mean_flow} for an illustration.  
The $3\times 3$ matrices $\Pt$ and $\bm{L}(\x,t)$ 
represent the projector onto the orthogonal complement of 
$\btheta$ 
associated with the condition of incompressibility 
and $(\x,t)$-dependent directional weightings associated with the 
prescribed  
Reynolds stress tensor, respectively.  

The explicit stochastic integral representation \eqref{eq:u'_inhom_intro} 
in combination with the two-scale approach 
allows to 
rigorously 
verify 
the 
desired 
model properties 
in an asymptotic sense 
as the ratio $\delta$ of representative turbulence scale to macro scale tends to zero, 
justifying 
the practical use of the model in scenarios with moderate inhomogeneities. 
In our main analytical 
results,  
Theorem~\ref{thm:kin_diss_div} and its ergodic counterpart Theorem~\ref{thm:ergodicity_inhom},  
we recover the prescribed characteristic flow quantities in terms of 
expected values and local spatio-temporal averages, 
respectively, related to the constructed fluctuation field. 
In particular, Theorem~\ref{thm:kin_diss_div} concerns 
not only the representation of 
turbulent kinetic energy $k(\x,t)$ and Reynolds stresses 
in terms of expected values related to 
the fluctuating velocity 
$\u'(\x,t)$ 
but also the representation of 
the dissipation rate $\eps(\x,t)$ 
in terms of expected values related to the 
velocity gradient $\nabla_{\x}\u'(\x,t)$. 
This is possible due to a 
thorough 
regularity analysis of the inhomogeneous random field 
%
%
and due to the specific structure of \eqref{eq:u'_inhom_intro}, 
preventing 
both 
a lack of invariance \wrt the underlying coordinate frame 
and 
significant distortions of the prescribed spectral structure 
by the 
non-uniform 
mean flow advection,   
while at the same time 
upholding 
the required regularity properties. 
%
If not avoided, these 
pitfalls 
would become particularly apparent 
in the 
analysis of the velocity gradient. 
Incompressibility is shown to hold in an approximate sense as well, 
and it holds exactly in the 
case of homogeneous and possibly anisotropic characteristic flow quantities. 
In addition, 
a 
novel inhomogeneous ergodicity result in Theorem~\ref{thm:ergodicity_inhom} 
essentially 
permits 
to replace the expected values from Theorem~\ref{thm:kin_diss_div} 
by respective local averages in space and time, 
thus providing an analytical verification for the validity of 
the ergodic hypothesis under inhomogeneous and potentially non-stationary flow conditions.
We further point out that the localizing effect of the time integration kernel 
$\eta$ 
in \eqref{eq:u'_inhom_intro} 
basically 
allows for a fully explicit approximation 
of the mean flow function 
$\flow(s;x,t)\approx \x - (t-s) \overline{\u}(\x,t)$ 
in the context of both analysis and simulation, 
facilitating 
efficient and flexible local evaluations of the nonlinearly advected field. 
Moreover, the 
presented 
model is 
customizable 
in that 
it is not only able to incorporate case-dependent RANS data 
%
but also offers a flexible choice of 
the employed model spectrum $E$ and time integration kernel $\eta$ 
%
as long as the respective regularity conditions in Model~\ref{model:turb_inhom} are fulfilled. 
This allows to 
optionally include additional aspects 
from general turbulence theory 
such as 
the $-5/3$ power law 
for the spectrum in the 
inertial range 
or estimates for the integral time scale; 
cf.~Examples \ref{ex:energy_spectrum}, \ref{ex:temporal_cor}, and \ref{ex:energy_spectrum_correlation_function}. 
Furthermore, 
local control of the 
width of the 
inertial range is facilitated 
by the 
employed 
non-dimensionalization 
that separates the 
local 
scales in space, time, and velocity 
from the 
local 
turbulence Reynolds number captured by the 
inverse of the 
second argument of $E$. 
The latter is related to the 
scale ratio between turbulent fine-scale (Kolmogorov) and large-scale structures 
and governs the 
shape of the 
local 
energy 
spectrum without necessarily involving 
infinite 
Reynolds number limits. 
Under the assumption of global homogeneity and isotropy, 
the model is 
perfectly 
compatible with the known spectral representations for homogeneous 
and isotropic 
turbulence used as a starting point. 

Seeking a 
compromise 
between accuracy and practical feasability, 
the scope of the model also involves 
considered limitations,  
such as the simplifying assumption of Gaussianity mentioned above. 
A further common simplification 
lies in the fact that the time integration kernel 
$\eta$ in \eqref{eq:u'_inhom_intro} 
is chosen to be independent of the wavenumber $\kappa$,   
capturing the temporal decay only in a mean sense, averaged over all wavelengths; 
compare, e.g., 
\cite{BBLC94, BED03, Kar+19, Bos+23}.  
It 
constitutes a pragmatic middle ground between 
the widely used but more restrictive 
hypothesis of frozen turbulence 
 \cite{Mann98, SSST14, Pol+20, ADDK20} 
and more 
detailed 
but computationally demanding 
descriptions of the temporal 
correlations \cite{Kan93, Laf+14, WH21, GMPSW22}. 
%


In summary, the article is structured as follows. 
After
revisiting the theory of homogeneous turbulence by 
setting up a general modeling framework for homogeneous fields and recalling 
stochastic Fourier-type integral representations 
in Section~\ref{sec:hom}, 
we specify our approach to non-dimensionalization and scaling 
in the context of inhomogeneous turbulence in Section~\ref{sec:scaling}. This is 
followed by a discussion of the scaling approach in regard to 
homogeneous fields including 
the derivation of alternative integral representation formulas 
in Section~\ref{sec:further_aspects}. 
Based on these preparations, our 
inhomogeneous random field model is 
finally 
introduced and analyzed in Section~\ref{sec:inhom}, 
before Section~\ref{sec:outlook} conludes with a short outlook on the accompanying paper \cite{AKLMW25}. 
The mathematical background of this article is substantiated in three appendices:
Auxiliary results concerning stochastic integration \wrt white noise and ergodicity of random vector fields are 
established 
in Appendices~\ref{app:white_noise} and \ref{sec:ergodicity}, respectively, 
while Appendix~\ref{sec:calc_estimates} contains technical calculations concerning an exemplary closure of our general inhomogeneous model. 

\subsection{General notation and conventions}

Throughout this article we set $\R^+ = (0,\infty)$, $\R^+_0 = [0,\infty)$ and denote by $\Re z$ and $\Im z$ the real and the imaginary part of a  complex number $z\in\C$.
Unless mentioned otherwise, it is assumed that $d$, $\ell$, $m$, $n\in\N$ are arbitrary natural numbers. 
We typically use small bold letters for vectors and capital bold letters for matrices. 
In order to distinguish dimensional variables from dimensionless variables, the former are indicated by the symbol $\vphantom{x}^\star$ (e.g.\ $t^\star$, $\x^\star$, $\u^\star$).
Elementary tensor operations are defined by $\bm{a} \cdot \bm{b} = \sum_{j} a_j b_j$, $\bm{a} \otimes \bm{b} = ( a_j b_k)_{j,k}$, $\bm{A} \cdot \bm{b} = (\sum_{k} A_{j,k} b_k)_j$, $\bm{a} \cdot \bm{B} = (\sum_{j} a_j B_{j,k})_k$, $\bm{A} \cdot \bm{B} = (\sum_{k} A_{j,k} B_{k,l})_{j,l}$, and $\bm{A} : \bm{B} = \sum_{j,k} A_{j,k} B_{j,k}$ for vectors and matrices $\bm{a} = (a_j)_j$, $\bm{b} = (b_j)_j$, $\bm{A} = (A_{j,k})_{j,k}$, $\bm{B} = (B_{j,k})_{j,k}$ of suitable dimensions. 
The Euclidean and Frobenius norm of real or complex vectors and matrices is denoted by $\| \bm{a} \| = (\sum_j |a_j|^2)^{1/2}$ and $\| \bm{A} \| = (\sum_{j,k} |A_{j,k}|^2)^{1/2}$, respectively.
If $\bm{f} \colon \R^n \to \R^d$ is a differentiable function, we write $D \bm{f}(\x)\in\R^{d\times n}$ for the Jacobian matrix of $\bm{f}$ at $\x\in\R^n$ and $\nabla \bm{f}(\x)\in\R^{n\times d}$ for the transpose thereof. 
The supremum norm of a function $\bm{F} \colon \R^n \to \R^{d \times \ell}$ is denoted by $\| \bm{F}\|_\infty = \sup_{\x \in \R^n} \| \bm{F}(\x) \|$. 
We use the notation $B_r^{(n)}(\x) = \{ \y \in \R^n \colon \| \y - \x \| \leq r \}$ for the closed ball in $\R^n$ with radius 
$r \in \R_0^+$ and center $\x \in \R^n$.
If it is clear from the context, we omit the notation of the dimension and write $B_r(\x) = B_r^{(n)}(\x)$. 
By $\B(U)$ we denote the Borel $\sigma$-algebra on a Borel set $U\subset \R^n$ and by $\lambda^n$ the Lebesgue measure on $(\R^n,\B(\R^n))$. 
Given a measure space $(U,\mathcal{A},\mu)$ and a finite-dimensional normed vector space $V$, we write $L^2(\mu;V) = L^2(U,\mu;V)$ for the space of (equivalence classes of) measurable and square-integrable functions from $U$ to $V$. 
Regarding the Fourier transform, we choose the convention
\begin{align*}
[\mathcal{F}(\phi)](\kap) = \frac{1}{(2\pi)^n} \int_{\R^n} e^{-i \kap\cdot\x} \phi(\x) \,\d \x, \quad
[\mathcal{F}^{-1}(\phi)](\x) =  \int_{\R^n} e^{i \kap\cdot\x} \phi(\kap) \,\d \kap
\end{align*}
for integrable functions $\phi\colon\R^n\to\C$ and $\x,\kap \in \R^n$. 
In particular, the same notation $\mathcal{F}$ is used for different dimensions $n$. 
The Fourier-transform of square-integrable functions is defined accordingly, and the Fourier transform of vector- and matrix-valued functions is considered as a component-wise operation. 
Unless mentioned otherwise, all random variables and random fields are assumed to be defined on a common underlying probability space $(\Omega,\mathscr{F},\Pr)$. 
If $V$ is a finite-dimensional normed vector space and $X \colon \Omega \to V$ is an integrable random variable, then  $\E[X] = \int_\Omega X \,\d\Pr$ denotes the expected value of $X$.


\section{Homogeneous turbulence revisited}\label{sec:hom}

In this section we revisit the theory of homogeneous turbulence in preparation of our inhomogeneous model. A suitable framework for random field modeling of homogeneous and spatially isotropic turbulence based on reformulating characteristic flow properties in terms of the energy spectrum is fixed in Subsection~\ref{subsec:revisited1}. Spectral representations of homogeneous turbulence fields involving stochastic Fourier-type integrals are recalled in Subsection~\ref{subsec:revisited2}.

\subsection{Random field modeling of homogeneous turbulence}\label{subsec:revisited1}

In homogeneous turbulence, the flow fields for mean velocity $\overline{\u}^\star$, kinematic viscosity $\nu^\star$, turbulent kinetic energy $k^\star$ and dissipation rate $\eps^\star$ are constant in space and time. The turbulent velocity is considered as a random field in space and time
\begin{align*}
\u^\star(\x^\star, t^\star) = \overline{\u}^\star + \u'^\star(\x^\star, t^\star), \qquad \E\bigl[ \u^\star(\x^\star, t^\star) \bigr]  = \overline{\u}^\star, \quad \E\bigl[ \u'^\star(\x^\star, t^\star) \bigr] = \bm{0},
\end{align*}
where the centered turbulent fluctuations  $\u'^\star$ are characterized by the properties
\begin{gather}\label{eq:kinetic_energy_hom*}
\frac12\,\E \Bigl[  \bigl\| \u'^\star(\x^\star, t^\star) \bigr\|^2  \Bigr] = k^\star,\\
\label{eq:dissipation_hom*}
 \frac12 \, \E \Bigl[  \bigl\| \nabla_{\x^\star} \u'^\star(\x^\star, t^\star) + \bigl(\nabla_{\x^\star}\u'^\star(\x^\star, t^\star)\bigr){\vphantom{)}}^{\!\top} \bigr\|^2  \Bigr] = \frac{\eps^\star}{\nu^\star}
\end{gather}
in terms of turbulent kinetic energy, dissipation rate, and kinematic viscosity.

The turbulent fluctuations are viewed as a stochastic process parametrized in $k^\star$, $\eps^\star$, $\nu^\star$ and in the mean velocity $\overline{\u}^\star$, i.e., $\u'^\star(\x^\star, t^\star)=\u'^\star(\x^\star, t^\star; \overline{\u}^\star, \nu^\star, k^\star,\eps^\star)$, where $\overline{\u}^\star$ enters via the hypothesis of advection-driven turbulence \cite{Tay38, Wil06}. Then, a dimensionless view allows the reduction of the parameter dependencies, yielding 
\begin{align}\label{eq:ansatz_u'_hom*}
\u'^\star(\x^\star, t^\star; \overline{\u}^\star, \nu^\star, k^\star,\eps^\star) = \sqrt{k^\star}\, \v \Bigl( \frac{\eps^\star}{\sqrt{k^\star}^3} \bigl(\x^\star - t^\star\overline{\u}^\star \bigr), \frac{\eps^\star}{k^\star} t^\star;  \frac{\eps^\star\nu^\star}{(k^\star)^2}\Bigr),
\end{align}
when using the typical turbulent length $\sqrt{k^\star}^3/\eps^\star$ and time $k^\star/\eps^\star$ for non-dimensionalization. The remaining single parameter $\zeta=\eps^\star\nu^\star/(k^\star)^2$ is the inverse of the turbulence Reynolds number, \cite[Section 6.3]{Pope00}. It is not only 
proportional to the inverse of the turbulent viscosity ratio, but also indicates the scale ratio between turbulent fine-scale (Kolmogorov) and large-scale structures, $0< \zeta\ll 1$. 
The dimensionless field $\v$ describes the turbulent fluctuations from a mean flow perspective and with respect to the turbulent scales.

Following Kolmogorov's local isotropy hypothesis \cite{Kol41, Kol62}, $\v(\x,t; \zeta)$ is modeled as a homogeneous, spatially isotropic, centered random field in  $(\x,t)\in \R^3\times \R$. As a simplifying assumption common in the context of simulation methods for random velocity fields,  
we additionally choose the random field $\v$ to be Gaussian, reflecting the fact that we focus on the second-order statistics of the turbulent fluctuations.  
Further assuming mean-square continuity, mean-square differentiability in $\x$, and divergence freedom
\begin{align}\label{eq:divergence_hom*}
\nabla_{\x} \cdot \v(\x, t;\zeta) = 0
\end{align}
as well as absolute continuity of the spectral measures of the components $v_j$ \wrt Lebesgue measure, it follows that the matrix-valued spectral measure of $\v$ admits a matrix-valued spectral density that can be expressed in terms of a single scalar-valued 
function $f$. 
%
In particular, the spectral density has the form 
\begin{align}\label{eq:sd_general}
f(\nkap,\omega;\zeta) \Pk, \qquad \P(\kap) = \bm{I}- \frac{\kap}{\nkap}\otimes\frac{\kap}{\nkap},
\end{align}
$(\kap,\omega)\in \R^3\times\R$, where $f(\nkap,\omega;\zeta)$ is non-negative and symmetric in $(\kap,\omega)$ and the projector $\P(\kap)\in \R^{3\times 3}$ is defined as the identity matrix $\bm{I}$ if 
$\kap=\bm{0}$,  cf.\ \cite[Section~12.3]{MY75}, \cite[Section~3.4.3]{Bre14}. 
The inverse Fourier transform of the spectral density represents the covariance function, i.e.,   
\begin{align}\label{eq:v_covariance}
\E \bigl[ \v(\x,t;\zeta) \otimes \v(\tilde\x,\tilde t;\zeta) \bigr] = \int_{\R^3 \times \R} \exp\Bigl\{ i\bigl(\kap\cdot(\x-\tilde\x)  + \omega(t-\tilde t\;\!) \bigr) \Bigr\} f\bigl( \nkap,\omega;\zeta) \Pk \,\d(\kap,\omega).
\end{align}
Choosing a product structure for $f$ of type
\begin{align}\label{eq:sd_product_approach}
f(\kappa,\omega;\zeta) = \fx(\kappa;\zeta) \ft(\omega),
\end{align}
$(\kappa,\omega)\in \R^+_0\times \R$, separates spatial and temporal 
correlation effects; compare, e.g., \cite{Kra70, SSC01, Ewe+11, HMRW13, Zwi+24}. 
It accounts for an initial spatial correlation structure of the turbulence pattern transported unchanged by the mean flow, which is superposed by a natural temporal decay.
In particular, the temporal decay is considered as independent of 
the wavenumber $\kappa$ and 
the inverse turbulent viscosity ratio $\zeta$. 
Balancing model accuracy and numerical feasability, the structural assumption \eqref{eq:sd_product_approach} offers a broader applicability than the widely used frozen turbulence hypothesis, 
formally corresponding to $\ft(\omega)=\delta_0(\omega)$ Dirac distribution, 
but constitutes a 
simplification 
compared to more involved spatio-temporal correlations models; 
see the respective references in Section~\ref{subsec:intro_contrib} above. 

Homogeneous  and spatially isotropic turbulence is usually described in terms of the energy spectrum $E(\kappa;\zeta)$, defined for $\kappa\in\R^+$ as the surface integral of $\fx(\|\kap\|;\zeta)$ over the sphere of all wave vectors $\kap\in\R^3$ with magnitude $\kappa$. Reformulating the characteristic 
flow conditions \eqref{eq:kinetic_energy_hom*}, \eqref{eq:dissipation_hom*} in terms of integral conditions on the energy spectrum, our framework for random field modeling of homogeneous turbulence reads as follows. 
\begin{model}[Homogeneous turbulence field]\label{model:turb_hom*}
Let $\v = (\v(\x,t; \zeta))_{(\x,t) \in \R^3 \times \R}$ be a homogeneous, mean-square continuous, centered, $\R^3$-valued Gaussian random field in $(\x,t)\in \R^3\times \R$ with parameter $\zeta\in \R^+$ and spectral density given by \eqref{eq:sd_general} and \eqref{eq:sd_product_approach}, where
\begin{enumerate}[label= \emph{\textbf{\alph*)}}]
\item the function $\fx(\bdot;\zeta)\colon \R_0^+ \to \R_0^+$ is measurable and the associated energy spectrum $E(\kappa; \zeta) = 4\pi \kappa^2\fx(\kappa;\zeta)$ satisfies
\begin{align}\label{eq:energy_spectrum_1}
\int_0^\infty E(\kappa;\zeta) \,\d\kappa = 1, \qquad \int_0^\infty \kappa^2 E(\kappa;\zeta) \,\d\kappa = \frac{1}{2\zeta},
\end{align} 
\item the function $\ft \colon \R \to \R_0^+$ is measurable, symmetric, and such that
\begin{align}\label{eq:temporal_correlation}
\int_\R \ft(\omega)\,\d \omega = 1.
\end{align}
\end{enumerate}
Then we refer to $\u'^\star$ described via \eqref{eq:ansatz_u'_hom*} as a \emph{homogeneous turbulence field}.
\end{model}

Let $\bm C_{\mathrm{x}}(\bdot;\zeta) \colon\R^3\to\R^{3\times 3}$ and $C_{\mathrm{t}} \colon\R\to\R$ 
denote the correlation functions given via (inverse) Fourier transform as 
\begin{align}\label{eq:correlation_functions}
\bm C_{\mathrm{x}}(\bm{y};\zeta) = \bigl[\mathcal{F}^{-1}\bigl(\fx(\|\!\bdot\!\|;\zeta) \P(\bdot)\bigr)\bigr](\bm{y}),\qquad C_{\mathrm{t}}(s) = \bigl[\mathcal{F}^{-1}\bigl(\ft(\bdot)\bigr)\bigr](s).
\end{align}
The following proposition summarizes the basic properties of Model~\ref{model:turb_hom*}.

\begin{proposition}[Characteristic flow properties]
\label{prop:properties_v_hom}
Assume Model~\ref{model:turb_hom*} to be given, then the random field $\v = (\v(\x,t; \zeta))_{(\x,t) \in \R^3 \times \R}$ is spatially isotropic, continuously mean-square differentiable in $\x$, and the characteristic turbulent flow properties \eqref{eq:kinetic_energy_hom*} and \eqref{eq:dissipation_hom*} as well as the condition of incompressibility \eqref{eq:divergence_hom*} are satisfied. 
In addition, we have
\begin{align}\label{eq:v_covariance_xt} 
\E \bigl[ \v(\x,t;\zeta) \otimes \v(\tilde\x,\tilde t;\zeta) \bigr]
= 
\bm C_{\mathrm{x}}( \x-\tilde\x;\zeta) \,C_{\mathrm{t}}(t-\tilde t\;\!)
\end{align}
with $\bm C_{\mathrm{x}}( \bm0;\zeta)=(2/3)\bm{I}$ and $C_{\mathrm{t}}(0)=1$.
\end{proposition}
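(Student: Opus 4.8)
The plan is to reduce every assertion to a second-moment computation, exploiting that both $\v$ and its spatial gradient are centered Gaussian random fields, so that the distributional statements (isotropy, incompressibility) follow once the relevant covariances are identified, while the $k$-$\eps$ flow properties become straightforward integral identities.

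First I would note that assumption a) of Model~\ref{model:turb_hom*} gives $\int_{\R^3}\fx(\nkap;\zeta)\,\d\kap=\int_0^\infty E(\kappa;\zeta)\,\d\kappa=1<\infty$ while $\|\Pk\|$ is uniformly bounded, and assumption b) gives $\ft\in L^1(\R)$; hence the integrand in \eqref{eq:v_covariance} is absolutely integrable, so Fubini's theorem splits the integral into the product of the spatial and temporal inverse Fourier transforms, which is exactly \eqref{eq:v_covariance_xt} with $\bm C_{\mathrm{x}},C_{\mathrm{t}}$ as in \eqref{eq:correlation_functions}. At coinciding arguments, $C_{\mathrm{t}}(0)=\int_\R\ft(\omega)\,\d\omega=1$ by \eqref{eq:temporal_correlation}, and passing to polar coordinates $\kap=\kappa\bm e$ over the unit sphere $S^2=\{\bm e\in\R^3:\|\bm e\|=1\}$, using the elementary identities $\int_{S^2}\d\sigma=4\pi$ and $\int_{S^2}\bm e\otimes\bm e\,\d\sigma=\tfrac{4\pi}{3}\bm I$ (off-diagonal entries vanish by oddness; diagonal ones are equal and sum to $4\pi$), yields $\bm C_{\mathrm{x}}(\bm0;\zeta)=\int_0^\infty\kappa^2\fx(\kappa;\zeta)\bigl(\int_{S^2}(\bm I-\bm e\otimes\bm e)\,\d\sigma\bigr)\d\kappa=\tfrac23\bm I\int_0^\infty E(\kappa;\zeta)\,\d\kappa=\tfrac23\bm I$, where I used $E(\kappa;\zeta)=4\pi\kappa^2\fx(\kappa;\zeta)$.

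Since $\v$ is centered Gaussian, spatial isotropy amounts to $\bm Q^\top\bm C_{\mathrm{x}}(\bm Q\bm y;\zeta)\bm Q=\bm C_{\mathrm{x}}(\bm y;\zeta)$ for all $\bm Q\in O(3)$, which I would get from the substitution $\kap\mapsto\bm Q\kap$ in the integral defining $\bm C_{\mathrm{x}}$, using rotation invariance of $\nkap$ and of Lebesgue measure together with the equivariance $\P(\bm Q\kap)=\bm Q\,\Pk\,\bm Q^\top$. For continuous mean-square differentiability in $\x$, assumption a) also gives $\int_{\R^3}\nkap^2\fx(\nkap;\zeta)\,\d\kap=\int_0^\infty\kappa^2 E(\kappa;\zeta)\,\d\kappa=\tfrac{1}{2\zeta}<\infty$, so dominated convergence licenses differentiating \eqref{eq:v_covariance} twice under the integral with respect to the spatial variables; this shows $\bm C_{\mathrm{x}}(\bdot;\zeta)\in C^2$, and by the standard correspondence between mean-square differentiability of a Gaussian field and smoothness of its covariance, $\v$ is continuously mean-square differentiable in $\x$ with derivative field again centered Gaussian, mean-square continuous, and covariance $\E[\partial_{x_k}v_j(\x,t)\,\partial_{\tilde x_l}v_m(\tilde\x,\tilde t)]=\bigl(\int_{\R^3}\kappa_k\kappa_l\,e^{i\kap\cdot(\x-\tilde\x)}\fx(\nkap;\zeta)[\Pk]_{jm}\,\d\kap\bigr)\,C_{\mathrm{t}}(t-\tilde t)$, where $\kappa_k$ is the $k$-th component of $\kap$.

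The remaining flow properties reduce to the two pointwise identities $\trace\Pk=2$ and $\kap\cdot\Pk=\bm0$ for $\kap\ne\bm0$. Incompressibility: $\E[(\nabla_{\x}\cdot\v(\x,t;\zeta))^2]=\sum_{j,k}\int_{\R^3}\kappa_j\kappa_k\fx(\nkap;\zeta)[\Pk]_{jk}\,\d\kap=\int_{\R^3}\fx(\nkap;\zeta)\,\kap\cdot\Pk\cdot\kap\,\d\kap=0$, and a centered Gaussian field with vanishing variance is a.s.\ zero (identically, after passing to a continuous modification), giving \eqref{eq:divergence_hom*}. Kinetic energy: $\E[\|\v(\x,t;\zeta)\|^2]=\trace\bigl(\bm C_{\mathrm{x}}(\bm0;\zeta)\bigr)C_{\mathrm{t}}(0)=2$, and inserting the scaling \eqref{eq:ansatz_u'_hom*} gives $\tfrac12\E[\|\u'^\star\|^2]=\tfrac12 k^\star\cdot2=k^\star$, i.e.\ \eqref{eq:kinetic_energy_hom*}. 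Dissipation: with $\bm A=\nabla_{\x}\v$ one has $\|\bm A+\bm A^\top\|^2=2\|\bm A\|^2+2\,\bm A:\bm A^\top$, and the derivative covariance above gives $\E[\|\bm A\|^2]=\int_{\R^3}\nkap^2(\trace\Pk)\fx(\nkap;\zeta)\,\d\kap=2\int_0^\infty\kappa^2 E(\kappa;\zeta)\,\d\kappa=\tfrac1\zeta$ and $\E[\bm A:\bm A^\top]=\int_{\R^3}\fx(\nkap;\zeta)\,\kap\cdot\Pk\cdot\kap\,\d\kap=0$, hence $\tfrac12\E[\|\bm A+\bm A^\top\|^2]=\tfrac1\zeta$; applying the chain rule to \eqref{eq:ansatz_u'_hom*} gives $\nabla_{\x^\star}\u'^\star=\tfrac{\eps^\star}{k^\star}\,\nabla_{\x}\v$, whence $\tfrac12\E[\|\nabla_{\x^\star}\u'^\star+(\nabla_{\x^\star}\u'^\star)^\top\|^2]=\bigl(\tfrac{\eps^\star}{k^\star}\bigr)^2\tfrac1\zeta=\tfrac{\eps^\star}{\nu^\star}$, which is \eqref{eq:dissipation_hom*}. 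The only genuinely delicate points are the justification of differentiation under the integral sign and of the Gaussian mean-square differentiability criterion; both rest solely on the finiteness of $\int_0^\infty\kappa^2E(\kappa;\zeta)\,\d\kappa$ guaranteed by \eqref{eq:energy_spectrum_1}, and everything else is bookkeeping with the projector $\Pk$.
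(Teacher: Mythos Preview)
Your proof is correct and follows essentially the same route as the paper: both reduce every claim to the spectral integral via the product structure \eqref{eq:sd_product_approach}, use the equivariance $\P(\bm Q\kap)=\bm Q\,\Pk\,\bm Q^\top$ for isotropy, invoke the finiteness of $\int_0^\infty\kappa^2E(\kappa;\zeta)\,\d\kappa$ to justify differentiating the covariance and hence mean-square differentiability, and exploit $\trace\Pk=2$ and $\kap\cdot\Pk=\bm0$ for the kinetic energy, dissipation, and incompressibility identities. The only cosmetic difference is that you obtain $\bm C_{\mathrm{x}}(\bm0;\zeta)=\tfrac23\bm I$ by a direct spherical-coordinates computation, whereas the paper first notes isotropy forces $\bm C_{\mathrm{x}}(\bm0;\zeta)$ to be a scalar multiple of $\bm I$ and then reads off the scalar from the trace; and your remark about passing to a continuous modification for incompressibility is slightly more than needed, since in this paper's conventions (Remark~\ref{rem:sample_path_reg}) the divergence is understood in the mean-square sense and $\E[|\nabla_\x\cdot\v|^2]=0$ already settles \eqref{eq:divergence_hom*}.
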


\begin{proof}
First note that the correlation formula \eqref{eq:v_covariance_xt} is a direct consequence of \eqref{eq:v_covariance}, \eqref{eq:sd_product_approach}, and \eqref{eq:correlation_functions}.
Spatial isotropy follows from the fact that for every orthogonal matrix  
$\bm{Q} \in \R^{3\times 3}$ and every $\kap \in \R^3$ we have 
$\bm P( \bm{Q} \kap) = \bm{Q} \Pk \bm{Q}^\top\!$, implying 
$\bm{C}_\mathrm{x}( \bm{Q} \y;\zeta) = \bm{Q} \bm{C}_\mathrm{x}( \y;\zeta) \bm{Q}^\top\!$, $\y\in\R^3$. 
Combining this,  \eqref{eq:energy_spectrum_1}, \eqref{eq:temporal_correlation}, and the fact that $\trace\P(\kap)=2$ for $\kap\neq\bm0$ establishes the assertions concerning $\bm C_{\mathrm{x}}( \bm0;\zeta)$ and $C_{\mathrm{t}}(0)$.  Next observe that \eqref{eq:energy_spectrum_1}, \eqref{eq:temporal_correlation}, and the dominated convergence theorem yield that $\bm{y}\mapsto\bm C_{\mathrm{x}}(\bm{y};\zeta)$ is twice continuously differentiable and $C_{\mathrm{t}}$ is continuous. Lemma~\ref{lem:cov_ms-diff} in Appendix~\ref{app:white_noise} thus ensures that $\v$ is partially mean-square differentiable \wrt $\x$. Formula \eqref{eq:derivative_of_cov_function} in Lemma~\ref{lem:cov_ms-diff} further implies the identity 
\begin{align*}
\E\Bigl[\bigl\| \partial_{x_j}\v(\x,t; \zeta) - \partial_{\tilde x_j}\v(\xalt,\talt ;\zeta) \bigr\|^2\Bigr] 
= 
2\,\trace \!\bigl(\partial_{y_j}^2\bm{C} _\mathrm{x}\bigr)(\x-\xalt;\zeta) \, C_\mathrm{t}(t-\talt\;\!) 
-2\, \trace \!\bigl(\partial_{y_j}^2\bm{C} _\mathrm{x}\bigr)(\bm{0};\zeta) \, C_\mathrm{t}(0)
 \end{align*}
and hence the mean-square continuity of $\partial_{x_j}\v$. The kinetic energy property \eqref{eq:kinetic_energy_hom*} is a  consequence of \eqref{eq:ansatz_u'_hom*} and the fact that $\E \bigl[ \|\v(\x,t;\zeta) \|^2 \bigr]  = \trace \bm{C}_\mathrm{x}( \bm{0}; \zeta) C_\mathrm{t}(0)=2$. 
Moreover, note that \eqref{eq:derivative_of_cov_function} in Lemma~\ref{lem:cov_ms-diff} 
and \eqref{eq:energy_spectrum_1} yield the identities 
$\E \bigl[ \| \nabla_{\x} \v(\x,t;\zeta) \|^2 \bigr] = - \trace ( \Delta_\y \bm{C}_\mathrm{x}) ( \bm{0}; \zeta) C_\mathrm{t}(0) = 1/\zeta$ and
\begin{align*}
 \E \Bigl[ \nabla_{\x} \v(\x,t;\zeta)  : \bigl(\nabla_{\x}\v(\x,t;\zeta)\bigr){\vphantom{)}}^{\!\top} \Bigr] &  = - (\nabla_\y \cdot \bm{C}_\mathrm{x} \cdot \nabla_\y)(\bm{0};\zeta) \, C_\mathrm{t}(0) = \E \Bigl[ \bigl|\nabla_\x \cdot \v(\x,t;\zeta)\bigr|^2 \Bigr],
\end{align*}
where the divergence of $\bm{C}_\mathrm{x}$ vanishes since $\kap\cdot\bm{P}(\kap)=\bm{0}$. 
Combining this with \eqref{eq:ansatz_u'_hom*} establishes the dissipation and incompressibility properties \eqref{eq:dissipation_hom*} and \eqref{eq:divergence_hom*}.
\end{proof}

Closing Model~\ref{model:turb_hom*} requires the specific choice of $E(\kappa;\zeta)$ and $\ft(\omega)$ (or alternatively of $C_\mathrm{t} (s)$, cf.~\eqref{eq:correlation_functions}).
The energy spectrum $E$ of isotropic turbulence is well studied in literature by means of theoretical and experimental investigations; see, e.g., \cite{Hin75,Fri95,Pope00} and references within. Of central importance is Kolmogorov's 5/3 law for the inertial subrange. 

\begin{example}[Energy spectrum]\label{ex:energy_spectrum} 
In \cite{MW06} the energy spectrum is modeled as
\begin{equation}\label{eq:def_E}
\begin{aligned}
E(\kappa;\zeta) = C_\mathrm{K}
\begin{cases}
\kappa_1^{-5/3} \sum_{j=4}^6 a_j\bigl( \tfrac{\kappa}{\kappa_1} \bigr)^{j}, & \kappa < \kappa_1,
\\ \kappa^{-5/3}, & \kappa_1 \leq \kappa \leq \kappa_2,
\\ \kappa_2^{-5/3} \sum_{j=7}^9 b_j \bigl(\tfrac{\kappa}{\kappa_2} \bigr)^{-j}, & \kappa_2 < \kappa, 
\end{cases}
\end{aligned}
\end{equation}
compare Figure~\ref{fig:model_spectrum}.
The model obeys Kolmogorov's 5/3 law, and its limit behavior goes with $E(\kappa;\bdot) \sim \kappa^4$ for $\kappa \rightarrow 0$ \cite{BP56} as well as $E(\kappa;\bdot) \sim \kappa^{-7}$ for $\kappa \rightarrow \infty$ \cite{Hei48}.  The Kolmogorov constant $C_\mathrm{K}$ is taken as $C_\mathrm{K}=1.5$,  \cite{YZ97}. The coefficients $a_j$, $b_j$ are chosen as $a_4 = 230/9$, $a_5 = -391/9$, $a_6 = 170/9$, $b_7 = 209/9$, $b_8 = -352/9$, and $b_9 = 152/9$ to ensure $E(\bdot;\zeta)\in {C}^2(\R_0^+)$. The $\zeta$-dependent transition wave numbers $\kappa_1$ and $\kappa_2$ are implicitly given by the integral identities~\eqref{eq:energy_spectrum_1}. 
The condition $0< \kappa_1< \kappa_2 < \infty$ is equivalent to $0< \zeta < \zeta_\mathrm{crit} \approx 0.14$. The bound on $\zeta$ is irrelevant for practical applications, since the general turbulence theory assumes the inverse turbulent viscosity ratio to satisfy $\zeta \ll 1$. Model variants with different regularities are conceivable.
\end{example}

\begin{figure}[h!]
	\includegraphics[width=.4\textwidth]{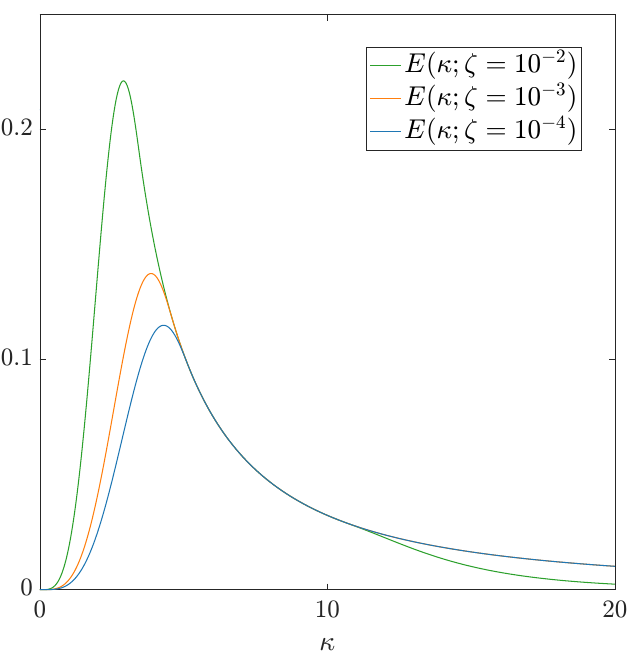}\qquad \quad
	\includegraphics[width=.4\textwidth]{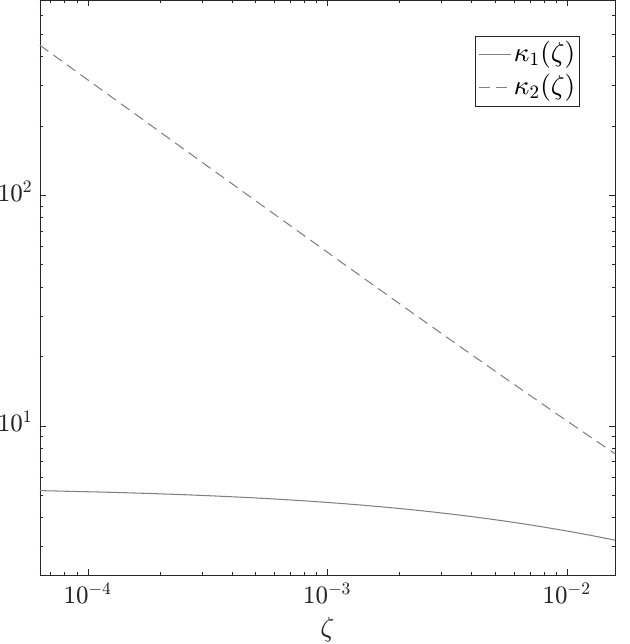}
	\caption{
	Model spectrum from Example~\ref{ex:energy_spectrum} 
	for three different values of the inverse turbulent viscosity ratio $\zeta$ (left)
	and implicitly determined transition wave numbers in dependence on $\zeta$ (right). 
	}
	\label{fig:model_spectrum}
\end{figure}

Alternative models for the energy spectrum can be found in, e.g., \cite[Section~6.5.3]{Pope00}, where the limit behavior for $\kappa \rightarrow \infty$ satisfies an exponential decay. The limit behavior -- indicating the dissipation of the turbulent fine-scale structure to heat -- is undoubtedly subject to the freedoms of modeling. 

The temporal correlation function $C_\mathrm{t}$ can be written as a convolution
$C_{\mathrm{t}}(s) = \int_\R \eta(s-r) \eta(r) \,\d r $ in terms of the symmetric kernel $\eta \colon \R \to \R$  satisfying 
\begin{align}\label{eq:eta}
\eta(s) = \frac{1}{\sqrt{2\pi}} \bigl[\mathcal{F}^{-1}\bigl(\fts(\bdot)\bigr)\bigr](s), \qquad \int_\R \eta^2(s)\,\d s = 1
\end{align}
due to the properties of the spectral density $\ft$. To model the temporal correlations this gives rise to a variety of possible choices for $\eta$.  From a numerical point of view, $\eta \in C^2(\R)$ with compact support is favorable. 
Heuristics exist \wrt the Lagrangian integral time scale $T_{\text{L}}$ given in dimensionless form as 
$T_{\text{L}}(\zeta)=(\E \bigl[ \|\v(\mathbf{0},0;\zeta) \|^2 \bigr])^{-1} \int_0^\infty \E \bigl[ \v(\mathbf{0},0;\zeta) \cdot  \v(\x_s,s;\zeta) \bigr] \, \d s$ 
with 
$\x_s=\int_0^s \v(\x_r,r; \zeta)\, \d r$ 
and its Eulerian counterpart $T_{\text{E}}$, i.e., 
$T_{\text{E}}(\zeta)=(\E \bigl[ \|\v(\mathbf{0},0;\zeta) \|^2 \bigr])^{-1}  \int_0^\infty \E \bigl[ \v(\mathbf{0},0;\zeta) \cdot \v(\mathbf{0},s;\zeta)\bigr]\, \d s$, 
\cite{TL72,Hin75, Lu94}. In particular, estimates for the integral time scales have been deduced and applied that are independent of the turbulent viscosity ratio,  
motivating the simplifying assumption that $\ft$ is independent of $\zeta$, cf.~\eqref{eq:sd_product_approach}, since $T_{\text{E}}= \int_{0}^\infty C_\mathrm{t}(s)\,\d s=\pi \ft (0)$.

\begin{example}[Temporal correlations]\label{ex:temporal_cor} 
Consider the time integration kernel
\begin{align*}
\eta(s) = \begin{cases}
\frac{a}{\sqrt{s_\mathrm{c}}} \, p \bigl( \frac{|s|}{s_\mathrm{c}} \bigr), & |s| \leq s_\mathrm{c},
\\ 0, & |s| > s_\mathrm{c},
\end{cases}
\end{align*}
with $p(z) = 1-10z^3+15z^4-6z^5$, $a = ({231}/{181})^{1/2}$ and $s_\mathrm{c} = ({362}/{231}) T_{\mathrm{E}}$ that satisfies the properties stated  above, 
compare Figure~\ref{fig:temp_corr}. 
In applications focusing on a Lagrangian perspective, the value for $T_{\mathrm{E}}$ may be chosen such that model-based numerical simulations of the Lagrangian integral time scale $T_{\mathrm{L}}$ are in accordance with respective estimates  in literature, i.e., $T_{\mathrm{L}}=0.16$ in \cite[p.\,426] {Hin75} and \cite[Section~3.1]{Lu94}. 
Employing the model spectrum from Example~\ref{ex:energy_spectrum}, 
the simulated values for $T_{\mathrm{L}}$ exhibit only a minor  dependency on the inverse viscosity ratio $\zeta$ 
in the relevant range $\zeta \leq 10^{-3}$
 and indicate $T_{\mathrm{E}}=0.3$ as a suitable choice. 
\end{example}

\begin{figure}[h!]
	\includegraphics[width=.4\textwidth]{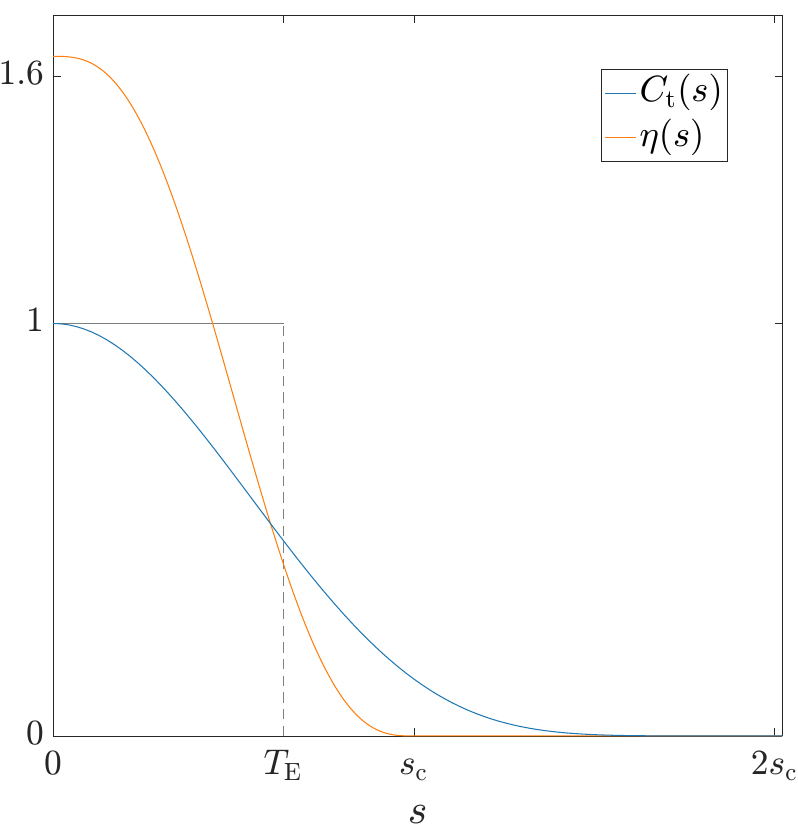} \qquad \quad
	\includegraphics[width=.4\textwidth]{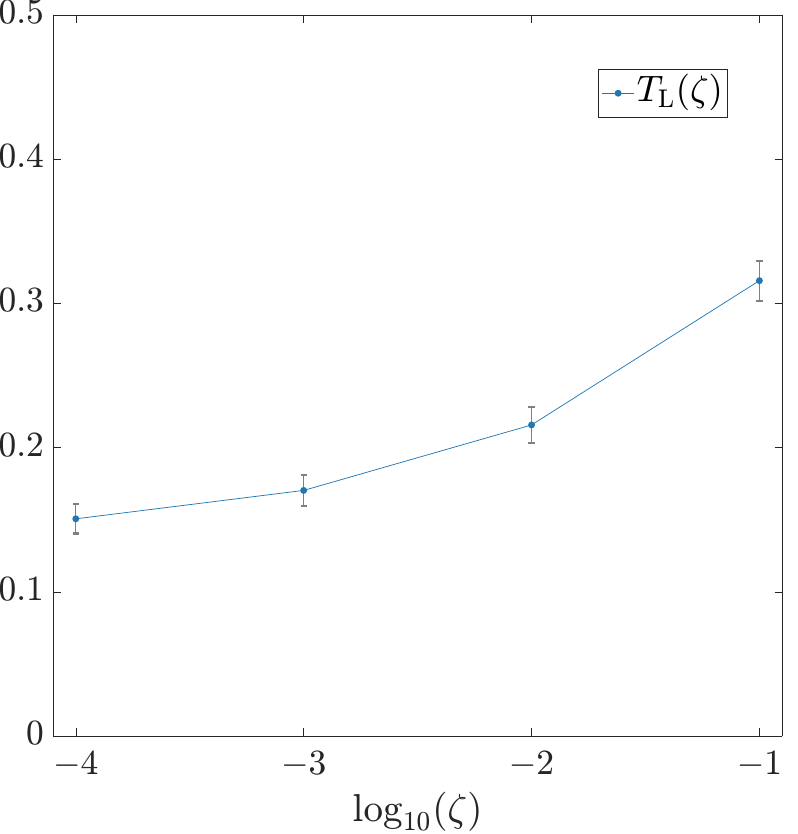}
	\caption{ 
		     Temporal correlation function $C_{\mathrm{t}}$
		     and time integration kernel $\eta$ 
		     from Example~\ref{ex:temporal_cor} (left) 
		     together with associated Monte Carlo estimates and $95\%$ confidence intervals for the Langrangian time scale $T_{\mathrm{L}}$ in dependence on the 
		     inverse turbulent viscosity ratio $\zeta$ 
		     (right). Each estimate is based on $20\,000$ simulated 
		     fluid particle trajectories. 
		     }
	\label{fig:temp_corr}
\end{figure}

Alternative models for the temporal correlations have been employed in,  e.g., 
\cite{Max87, CSS93, BBLC94, Dav07, MW11, Kar+19}. 
The frozen turbulence hypothesis \cite{Tay38}, in which the turbulence patterns are transported by the mean flow without changing their structure over time, assumes $T_{\mathrm{E}}\rightarrow \infty$. This corresponds to $C_{\mathrm{t}}(s) = 1$ and $\ft(\omega) = \delta_0(\omega)$. 

\begin{remark}[Sample path regularity] \label{rem:sample_path_reg}
Model~\ref{model:turb_hom*} and Proposition~\ref{prop:properties_v_hom} involve continuity and differentiability properties of random fields in the mean-square sense, cf.\ Appendix~\ref{subsec:appendix_differentiability}.  
In order to simplify the presentation, we refrain from analyzing pathwise regularity properties of random fields such as pathwise continuity and pathwise differentiability. 
Instead, the focus lies on mean-square regularity properties, and 
in particular all 
derivatives of random fields such as the gradients in \eqref{eq:kinetic_energy_hom*}, \eqref{eq:divergence_hom*} are understood in the mean-square sense. 
It is well-known that under suitable additional assumptions, 
the considered random fields admit modifications with differentiable sample 
paths; see, e.g., \cite{Kru01, AT07, Pot10}. 
In this case the  mean-square derivatives 
coincide with the pathwise derivatives $\Pr$-almost surely.
\end{remark}

\subsection{Stochastic spectral representations}\label{subsec:revisited2}

The theory of stationary random fields provides the means to translate spectral representations of covariance functions of the form \eqref{eq:v_covariance} into spectral representations  of the trajectories of the associated random fields involving stochastic Fourier-type integrals. The corresponding result is sometimes referred to as the Cramér-Khinchin decomposition, and our standard references in this context are \cite[Chapter~3]{Bre14},  \cite[Chapter~IV]{GS74}, and \cite[Chapter~6]{MY75}.  
In Proposition~\ref{prop:sprectral_rep_v_hom} below we present a specific variant of this result which is suitable for our purpose. 
Similar decompositions can be found, e.g., in \cite[Proposition~11.19]{Kal97} and \cite[Section~2.1]{KS06}. 
The employed concepts of complex-valued white noise and the associated stochastic integral for deterministic integrands are recalled and specified in Appendix~\ref{app:white_noise}. 

\begin{proposition}[Spectral representation]\label{prop:sprectral_rep_v_hom}
Assume the random field $\v$ from Model~\ref{model:turb_hom*} to be given. Then there exists a $\C^3$-valued Gaussian white noise $\wn$ on $\R^3 \times \R$ with structural measure $2 \lambda^3 \otimes \lambda^1$ 
in the sense of Definition \ref{def:gaussian_white_noise} 
such that for all $(\x,t) \in \R^3 \times \R$ it holds $\Pr$-almost surely that
\begin{align}\label{eq:v_integral_representation_1}
\v(\x,t;\zeta) & = \Re \!\int_{\R^3\times\R} \exp\bigl\{ i(\kap\cdot\x + \omega t)\bigr\} \fxs\bigl(\nkap;\zeta\bigr)\fts(\omega) \Pk \cdot \wn(\d\kap,\d\omega).
\end{align}
\end{proposition}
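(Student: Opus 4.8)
The identity~\eqref{eq:v_integral_representation_1} is a matrix-valued, white-noise form of the Cram\'er--Khinchin spectral decomposition, and I would prove it in two stages: first extract a classical random spectral measure of $\v$, then recast it as a $\C^3$-valued white noise integral of the prescribed shape and verify the representation by a covariance computation. For the first stage, note that by Model~\ref{model:turb_hom*} and~\eqref{eq:v_covariance}, $\v$ is a mean-square continuous, centered, homogeneous $\R^3$-valued Gaussian random field on $\R^3\times\R$ whose matrix-valued spectral measure has Lebesgue density $(\kap,\omega)\mapsto f(\nkap,\omega;\zeta)\Pk$. The classical spectral representation theorem for homogeneous random fields (e.g.\ \cite[Chapter~3]{Bre14}, \cite[Chapter~IV]{GS74}, \cite[Chapter~6]{MY75}, \cite[Proposition~11.19]{Kal97}, \cite[Section~2.1]{KS06}) then produces a $\C^3$-valued orthogonal random spectral measure $\bZ$ on $\B(\R^3\times\R)$ with $\E\bigl[\bZ(A)\otimes\overline{\bZ(B)}\bigr]=\int_{A\cap B}f(\nkap,\omega;\zeta)\Pk\,\d(\kap,\omega)$, satisfying the Hermitian symmetry $\bZ(-B)=\overline{\bZ(B)}$ $\Pr$-almost surely (which encodes the reality of $\v$), and such that $\v(\x,t;\zeta)=\int_{\R^3\times\R}e^{i(\kap\cdot\x+\omega t)}\,\bZ(\d\kap,\d\omega)$ $\Pr$-a.s.\ for every $(\x,t)$.

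The transition from $\bZ$ to the white noise $\wn$ rests on the factorization $H(\kap,\omega)\,H(\kap,\omega)^{*}=f(\nkap,\omega;\zeta)\Pk$ with $H(\kap,\omega):=\fxs(\nkap;\zeta)\fts(\omega)\Pk$, which holds because $\fx,\ft\ge0$ and the projector $\Pk$ is real, symmetric and idempotent, hence its own square root. One would like to set $\wn=\bigl(\sqrt2\,\fxs(\nkap;\zeta)\fts(\omega)\bigr)^{-1}\bZ$, but $H$ is not invertible: it has rank two and vanishes on $\{f=0\}$ (a set that need not be Lebesgue-null), so $\bZ$ carries no information in the direction $\kap$ nor on $\{f=0\}$, and, in addition, $\Re\int$ is insensitive to the components of $\wn$ whose parity under $(\kap,\omega)\mapsto(-\kap,-\omega)$ is opposite to the one forced on $\bZ$ by the Hermitian symmetry. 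Accordingly I would define $\wn$ by normalizing and pulling $\bZ$ back along $H$ on the part of the spectrum where this is meaningful (the range of $\Pk$, on $\{f>0\}$, in the parity-compatible components) and by filling in all remaining degrees of freedom with an auxiliary $\C^3$-valued Gaussian white noise independent of $\v$, passing if necessary to an extension of $(\Omega,\mathscr{F},\Pr)$ carrying such a noise. Fixing the normalization so that $\E\bigl[\wn(A)\otimes\overline{\wn(B)}\bigr]=2\,(\lambda^3\otimes\lambda^1)(A\cap B)\,\bm{I}$ and $\E\bigl[\wn(A)\otimes\wn(B)\bigr]=\bm{0}$ makes $\wn$ a $\C^3$-valued Gaussian white noise with structural measure $2\lambda^3\otimes\lambda^1$; the factor $2$ is precisely what offsets the factor $\tfrac12$ generated by taking real parts below.

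To verify~\eqref{eq:v_integral_representation_1}, note first that $(\kap,\omega)\mapsto e^{i(\kap\cdot\x+\omega t)}H(\kap,\omega)$ lies in $L^2(2\lambda^3\otimes\lambda^1;\C^{3\times 3})$, since $\int_{\R^3}\fx(\nkap;\zeta)\,\d\kap=\int_0^\infty E(\kappa;\zeta)\,\d\kappa=1$ by~\eqref{eq:energy_spectrum_1}, $\int_\R\ft(\omega)\,\d\omega=1$ by~\eqref{eq:temporal_correlation}, and $\Pk$ is bounded; hence the white-noise integral is well defined (cf.\ Appendix~\ref{app:white_noise}) and $\tilde{\v}(\x,t):=\Re\int_{\R^3\times\R}e^{i(\kap\cdot\x+\omega t)}H(\kap,\omega)\cdot\wn(\d\kap,\d\omega)$ is a centered $\R^3$-valued Gaussian random field. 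By the white-noise isometry together with $HH^{*}=f\Pk$ and $\E[\wn\otimes\wn]=\bm{0}$,
\begin{align*}
\E\bigl[\tilde{\v}(\x,t)\otimes\tilde{\v}(\xalt,\talt)\bigr]
&=\tfrac12\,\Re\!\int_{\R^3\times\R}\!e^{i(\kap\cdot(\x-\xalt)+\omega(t-\talt))}\,2\,f(\nkap,\omega;\zeta)\Pk\,\d(\kap,\omega)\\
&=\int_{\R^3\times\R}\!e^{i(\kap\cdot(\x-\xalt)+\omega(t-\talt))}\,f(\nkap,\omega;\zeta)\Pk\,\d(\kap,\omega)
=\E\bigl[\v(\x,t;\zeta)\otimes\v(\xalt,\talt;\zeta)\bigr],
\end{align*}
where the penultimate equality drops the real part because the imaginary part of the integrand is odd under $(\kap,\omega)\mapsto(-\kap,-\omega)$ (as $f$ and $\Pk$ are even) and hence integrates to zero, and the last equality is~\eqref{eq:v_covariance}. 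Since the information-carrying part of $\wn$ was, by construction, built from the spectral measure $\bZ$ of $\v$, the same computation yields the matching cross-covariance $\E\bigl[\tilde{\v}(\x,t)\otimes\v(\xalt,\talt;\zeta)\bigr]=\E\bigl[\v(\x,t;\zeta)\otimes\v(\xalt,\talt;\zeta)\bigr]$ (the auxiliary part of $\wn$ drops out, being annihilated either by $\Pk$, or by the vanishing of $\fxs\fts$, or by parity against the Hermitian symmetry of $\bZ$). Consequently $\tilde{\v}(\x,t)-\v(\x,t;\zeta)$ is a centered Gaussian vector with $\E\bigl[\|\tilde{\v}(\x,t)-\v(\x,t;\zeta)\|^2\bigr]=0$, so it vanishes $\Pr$-almost surely, which is~\eqref{eq:v_integral_representation_1}.

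The only genuinely delicate step is the construction of $\wn$ in the second stage: because the spectral ``square root'' $H=\fxs\fts\Pk$ is rank-deficient and vanishes where $f=0$, the white noise $\wn$ is not a function of $\v$, so one must adjoin an independent auxiliary white noise on an extension of the probability space and then argue both that the enlarged object is a bona fide $\C^3$-valued white noise with structural measure exactly $2\lambda^3\otimes\lambda^1$ and that this adjoined randomness is invisible in~\eqref{eq:v_integral_representation_1}. The attendant bookkeeping --- keeping consistent track of the factor $2$ in the structural measure, the factor $\tfrac12$ from $\Re$, and the reflection symmetry $(\kap,\omega)\leftrightarrow(-\kap,-\omega)$ --- is routine but easy to slip on.
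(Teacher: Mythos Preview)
Your argument is correct, but the paper takes a markedly shorter and more abstract route. Rather than starting from the orthogonal random measure $\bZ$ of $\v$ and constructing $\wn$ from it by normalization plus auxiliary filling, the paper reverses the logic: it first fixes an arbitrary $\C^3$-valued white noise $\widetilde{\wn}$ on some (possibly auxiliary) probability space, defines $\widetilde{\v}$ by the right-hand side of~\eqref{eq:v_integral_representation_1} with $\widetilde{\wn}$, and checks via Corollary~\ref{cor:covmain} that $\widetilde{\v}$ and $\v$ share the same covariance, hence the same law. It then invokes a transfer theorem (\cite[Theorem~5.10]{Kal97}) to conclude that, after extending $(\Omega,\mathscr{F},\Pr)$, there is a white noise $\wn$ on the original space with $(\v,\wn)\stackrel{\mathrm d}{=}(\widetilde{\v},\widetilde{\wn})$; since $\widetilde{\v}$ is a fixed measurable functional of $\widetilde{\wn}$, the almost-sure identity~\eqref{eq:v_integral_representation_1} follows.

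The two approaches trade off explicitness against economy. Your construction is concrete and exhibits how the ``visible'' part of $\wn$ sits inside the spectral measure of $\v$, at the cost of the delicate bookkeeping you flag: rank-two projectors, the set $\{f=0\}$, and the Hermitian parity all have to be handled, and one must still extend the probability space for the auxiliary noise. The paper's route sidesteps all of this---the rank deficiency of $H=\fxs\fts\Pk$ never enters, nor does the possibly non-null set $\{f=0\}$---because the transfer theorem produces $\wn$ in one stroke, with the probability-space extension absorbed into the black box. If one is content to cite such a transfer principle, the paper's argument is considerably shorter; your route is the right one if one wants a self-contained proof that makes the relation between $\v$ and $\wn$ explicit.
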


\begin{proof}
Recall that $\v$ is defined on the underlying probability space $(\Omega,\mathscr F,\Pr)$. 
Let $\widetilde{\wn}$ be a $\C^3$-valued Gaussian white noise on $\R^3 \times \R$ with structural measure $2 \lambda^3 \otimes \lambda^1$, possibly defined on a different probability space $(\widetilde\Omega,\widetilde{\mathscr F},\widetilde{\Pr})$, and let $\widetilde\v= (\widetilde\v(\x,t; \zeta))_{(\x,t) \in \R^3 \times \R}$ be defined by  \eqref{eq:v_integral_representation_1} with $\widetilde\v$ and $\widetilde\wn$ in place of $\v$ and $\wn$. 
Then $\widetilde\v$ is a centered Gaussian random field and an application of Corollary~\ref{cor:covmain} 
implies that the covariance functions of $\widetilde\v$ and $\v$ coincide. 
It follows that  $\widetilde\v$ and $\v$ are identically distributed. 
Combining this and, e.g., \cite[Theorem~5.10]{Kal97} ensures that -- after possibly extending the probability space $(\Omega,\mathscr F,\Pr)$ -- there exists a $\C^3$-valued white noise $\wn$ on $\R^3 \times \R$ defined on $(\Omega,\mathscr F,\Pr)$ such that the tuples $(\widetilde\v,\widetilde\wn)$ and $(\v,\wn)$ have the same distribution. 
The construction of $\widetilde\v$ via $\widetilde\wn$ thus implies the representation formula \eqref{eq:v_integral_representation_1}.
\end{proof}

We remark that the change of variables formula for spherical coordinates established in Corollary~\ref{cor:polar} 
implies that the spectral decomposition \eqref{eq:v_integral_representation_1} can be rewritten in the form
\begin{align}\label{eq:v_integral_representation_2}
\v(\x,t;\zeta) & = \Re\!\int_{\R^+ \times S^2\times\R} \exp\bigl\{ i(\kappa \;\!\btheta \cdot\x + \omega t)\bigr\} E^{1/2}(\kappa; \zeta)\fts(\omega) \P( \btheta) \cdot  \wn(\d\kappa,\d\btheta,\d\omega), 
\end{align}
which particularly highlights the isotropic structure of the random field. 
Here, with a slight abuse of notation, $\wn(\d\kappa,\d\btheta,\d\omega)$ denotes 
a $\C^3$-valued Gaussian white noise on $\R^+\times S^2\times\R$ with structural measure $2\lambda^1|_{\R^+}\otimes U_{S^2}\otimes\lambda^1$, 
obtained by transformation of 
$\wn(\d\kap,\d\omega)$ in \eqref{eq:v_integral_representation_1}.  
The notations $S^2$ and $U_{S^2}$ refer to the unit $2$-sphere and the uniform distribution on the unit $2$-sphere (i.e., the normalized surface measure), respectively.

\section{Non-dimensionalization and scaling}\label{sec:scaling}

With the idea of formulating an inhomogeneous turbulence field in terms of Fourier-type stochastic integrals, this section presents a non-dimensionalization and a specific scaling that will allow the assessment of the inhomogeneous model and its properties by asymptotic analysis. In inhomogeneous turbulence the flow fields for mean velocity $\overline{\u}^\star(\x^\star, t^\star)$, kinematic viscosity $\nu^\star(\x^\star, t^\star)$, turbulent kinetic energy $k^\star(\x^\star, t^\star)$ and dissipation rate $\eps^\star(\x^\star, t^\star)$ are functions of space $\x^\star$ and time $t^\star$. The turbulent velocity is composed of
\begin{align*}
\u^\star(\x^\star, t^\star) = \overline{\u}^\star(\x^\star, t^\star) + \u'^\star(\x^\star, t^\star),
\qquad \E\bigl[ \u^\star(\x^\star, t^\star) \bigr]  = \overline{\u}^\star(\x^\star, t^\star) , \quad \E\bigl[ \u'^\star(\x^\star, t^\star) \bigr] = \bm{0},
\end{align*}
where we consider the turbulent fluctuations  $\u'^\star$ as a centered random field depending on the above flow quantities.

For non-dimensionalization we introduce four reference values: 
\begin{align*}
x_0, \qquad \nu_0, \qquad k_0,\qquad \eps_0,
\end{align*}
i.e., a macro length $x_0$ coming from the (outer) geometry of the flow problem as well as typical values for kinematic viscosity $\nu_0$, kinetic energy $k_0$, and dissipation rate $\eps_0$ characterizing the turbulence. Depending on them, we consider 
\begin{align*}
u_0 = \sqrt{k_0}, \qquad t_0 = x_0/\sqrt{k_0}, \qquad x_\mu = \sqrt{k_0}^3/\eps_0, \qquad t_\mu = k_0/\eps_0.
\end{align*}
The quantities $x_\mu$ and $t_\mu$ indicate a micro scale; they are the typically turbulent length and time known from the homogeneous turbulence theory \eqref{eq:ansatz_u'_hom*}. Our treatment of inhomogeneous turbulence is based on introducing a macro scale, by using a macro length $x_0$ and respective time $t_0$. The velocity value $u_0$ we choose satisfies $u_0 = x_0/t_0 = x_\mu/t_\mu $. It yields a balancing between mean flow and fluctuations. From the perspective of the macro scale, we introduce the dimensionless functions for the flow quantities as
\begin{equation}\label{eq:scaling}
\begin{aligned}
u_0 \u(\x,t) = \u^\star(x_0 \x,t_0 t), \qquad u_0 \overline{\u}(\x,t) & = \overline{\u}^\star(x_0\x,t_0 t), \qquad u_0 \u'(\x,t) = \u'^\star(x_0 \x,t_0 t),\\
\nu_0 \nu(\x,t) = \nu^\star(x_0 \x,t_0 t), \,\, \qquad k_0 k(\x,t) & = k^\star(x_0 \x,t_0 t), \,\,\,\, \qquad \eps_0 \eps(\x,t) = \eps^\star(x_0 \x,t_0 t).
\end{aligned}
\end{equation}
where the dimensionless turbulent velocity satisfies
\begin{align*}
\u(\x,t) = \overline{\u}(\x,t) + \u'(\x,t) 
\end{align*} 
The turbulent flow is thus characterized by two dimensionless numbers, i.e., 
\begin{align}\label{eq:z_delta}
z = \frac{\eps_0\nu_0}{k_0^2},\qquad \delta = \frac{x_\mu}{x_0} = \frac{t_\mu}{t_0} =\frac{\sqrt{k_0}^3}{\eps_0 x_0},
\end{align}
satisfying $z,\delta\ll 1$ in practical applications.
The inverse turbulent viscosity ratio (inverse of the turbulence Reynolds number) $z$ is already known from the homogeneous theory. It indicates the scale ratio between the turbulent fine-scale structure (Kolmogorov scales of small vortices dissipating into heat) and the turbulent large-scale structure (large energy-bearing vortices). In addition, we face here a turbulence scale ratio $\delta$ that prescribes the ratio between the turbulent scale and the macro scale coming from the (outer) geometry of the flow problem. 
Moreover, we have the following space- and time-dependent scaling functions
\begin{equation}\label{eq:sigma_x_t_u_z}
\begin{aligned}
\sx(\x,t) = \!\frac{\sqrt{k(\x,t)}^{3}}{\eps(\x,t)}, \quad \st(\x,t) = \frac{k(\x,t)}{\eps(\x,t)}, \quad \su(\x,t) = \sqrt{k(\x,t)}, \quad \sz(\x,t) = \frac{\eps(\x,t)\nu(\x,t)}{k(\x,t)^2}.
\end{aligned}
\end{equation}

The following inhomogeneous turbulence reconstruction is based on a two-scale consideration. The micro scale $x_\mu$, $t_\mu$ shows the characteristics of locally homogeneous isotropic turbulence that are superposed on the macro scale $x_0$, $t_0$.
The turbulence scale ratio  $\delta\ll 1$ plays a crucial role in the asymptotic analysis of the model, as we will show that the characteristic flow properties of the turbulent field concerning dissipation rate and incompressibility are satisfied as $\delta \rightarrow 0$. Note that the Kolmogorov scale does not provide any additional information for the asymptotics in this context, since it can be interpreted as rescaling of the micro scale using the inverse turbulent viscosity ratio $z$, i.e., for the Kolomogorov length $x_\mathrm{K}$ and time $t_\mathrm{K}$ 
we have 
$x_\mathrm{K}=z^{3/4} x_\mu$ and $t_\mathrm{K}=z^{1/2} t_\mu$.


\section{Further aspects of homogeneous turbulence}
\label{sec:further_aspects}

This section continues the discussion of homogeneous turbulence initiated in Section~\ref{sec:hom}. 
The two-scale perspective introduced in Section~\ref{sec:scaling} is taken into account in Subsection~\ref{subsec:hom_scaling}, including the investigation of ergodicity properties in Proposition~\ref{prop:ergodicity_hom}. 
Subsection~\ref{subsec:hom_alternative} is concerned with the derivation of modified stochastic integral representation formulas in Lemmata~\ref{lem:transformation1}--\ref{lem:transformation3} that are of key importance for the conceptualization of our inhomogeneous model.

\subsection{The homogeneous model in view of scaling}\label{subsec:hom_scaling}

In the following, we present the homogeneous turbulence model of Section~\ref{sec:hom} from the two-scale perspective. Using the scaling introduced in Section~\ref{sec:scaling}, the turbulent velocity fluctuations \eqref{eq:ansatz_u'_hom*} are given in dimensionless form as
\begin{align}\label{eq:ansatz_u'_hom}
\u'(\x,t) = \su\, \v \Bigl( \frac{1}{\delta\sx} \bigl(\x - t\overline{\u} \bigr), \frac{1}{\delta\st} \,t ;  \sz z \Bigr).
\end{align} 
The characteristic turbulent flow properties \eqref{eq:kinetic_energy_hom*} and \eqref{eq:dissipation_hom*} as well as the condition of incompressibility \eqref{eq:divergence_hom*} become
\begin{align}\label{eq:kinetic_evergy_hom}
\frac 12\,\E \Bigl[ \bigl\| \u'(\x,t) \bigr\|^2 \Bigr] & = k \,= \su^2,\\
\label{eq:dissipation_hom}
\frac12 \,\delta^2 z\,\E \Bigl[ \bigl\| \nabla_{\x} \u'(\x,t) + \bigl( \nabla_{\x}\u'(\x,t) \bigr){\vphantom{)}}^{\!\top} \bigr\|^2 \Bigr] & = \frac{\eps}{\nu} = \frac{\su^2}{\sx^2 \sz},\\
\label{eq:divergence_hom}
\delta \,\nabla_{\x} \cdot \u'(\x, t) & = 0.
\end{align}
Note that the condition of incompressibility originally carries the scaling factor $ {\sx}/{\su}$, which is of order one. Since it plays no role in the further asymptotic analysis with respect to the small parameter $\delta \ll 1$, it is omitted in \eqref{eq:divergence_hom}.

Combining \eqref{eq:ansatz_u'_hom} with the covariance formula~\eqref{eq:v_covariance_xt}, the correlations of the fluctuations read
\begin{align}\label{eq:cov_u'_hom}
\E \bigl[ \u'(\x,t) \otimes \u'(\tilde\x,\tilde t\;\!) \bigr] 
= \su^2\, \bm C_{\mathrm{x}}\Bigl( \frac{1}{\delta \sx}\bigl(\x-\tilde\x- (t-\tilde t\;\!)\overline{\u}\bigr);  \sz z \Bigr) C_{\mathrm{t}} \Bigl( \frac{1}{\delta\st} (t-\tilde t\;\!)\Bigr), 
\end{align}
which simplifies to $(2/3) k \bm{I}$ in the case $(\x,t)=(\tilde\x,\tilde t)$. 
In view of the stochastic integral representation~\eqref{eq:v_integral_representation_1}, the spectral decomposition of $\u'$
is given by
\begin{align}\label{eq:u'_integral_representation_hom_1}
\u'(\x,t) & = \su \:\Re \!\int_{\R^3\times\R} \exp\Bigl\{ i\frac{1}{\delta}\Bigl[ \frac{1}{\sx} \kap\cdot(\x - t\overline{\u}) + \frac{1}{\st} \omega t \Bigr]\Bigr\} \fxs\bigl(\nkap;\sz z\bigr)  \fts(\omega) \Pk \cdot \wn(\d\kap,\d\omega),
\end{align}
where $\wn$ is a $\C^3$-valued Gaussian white noise on $\R^3 \times \R$ with structural measure $2 \lambda^3 \otimes \lambda^1$.
The spectral decomposition can be rewritten in terms of spherical coordinates in analogy to \eqref{eq:v_integral_representation_2}.

\begin{remark}
The use of constant scaling functions seems superfluous. But, with respect to our goal of considering inhomogeneous turbulence, it is important to be able to treat different homogeneous situations with the same reference values. However, in case of considering a single homogeneous flow field, the constant flow parameters are naturally chosen as reference values for the non-dimensionalization, i.e., $\nu_0=\nu^\star$, $k_0=k^\star$, $\eps_0=\eps^\star$, as done in Section~\ref{sec:hom}. In this case $\sx = \st = \su = \sz = 1$ and likewise $k=\eps=\nu=1$ hold, and the characteristic dimensionless numbers are $z=\zeta= {\eps^\star \nu^\star}/{(k^\star)^2}$ and $\delta = {\sqrt{k^\star}^3}/{(\eps^\star x_0)}$.
\end{remark}

The explicit separation of micro and macro scale specified by the ratio $\delta$ is not only pivotal for the justification and analysis of our inhomogeneous model, but also allows for a natural and practical description of ergodicity properties of homogeneous turbulence. 
The following result establishes the approximation of characteristic values of the fluctuation field -- interpreted as expectations \wrt~the underlying probability distribution as in \eqref{eq:kinetic_evergy_hom}, \eqref{eq:dissipation_hom} -- by means of averages in time and space. 
Thanks to the two-scale approach, the averages are not required to be taken \wrt asymptotically infinite domains but may be local averages, given that the turbulence scale ratio $\delta$ tends to zero. 
This aspect will be crucial for the derivation of similar ergodicity results in the inhomogeneous case.

\begin{proposition}[Ergodicity]\label{prop:ergodicity_hom}
Let $\u' = (\u'(\x,t))_{(\x,t) \in \R^3 \times \R}$ be a homogeneous turbulence field in the sense of Model~\ref{model:turb_hom*}, given in the dimensionless form \eqref{eq:ansatz_u'_hom}, and let $L,L' \in [0,\infty]$ be such that $L = \infty \,\vee \, L' = \infty$. 
Then, considering radii of integration $R,R' \in \R_0^+$ and the turbulence scale ratio $\delta \in \R^+$, for every $(\x,t)\in\R^3\times\R$ we have the following convergences in the mean-square sense as $(R/\delta,R'/\delta)\to(L,L')$ in $\overline{\R}\times\overline{\R}$:
\begin{align}
& \mint{-}_{B_R^{(1)}(t)}\mint{-}_{B_{R'}^{(3)}(\x+(s-t)\overline\u)} \u'(\y,s) \,\d\y\,\d s 
\;\; \xrightarrow{\;L^2\;}\;\; \mathbf 0 
\label{eq:ergodicity_prop_hom_A}\\
\frac12\,& \mint{-}_{B_R^{(1)}(t)}\mint{-}_{B_{R'}^{(3)}(\x+(s-t)\overline\u)} \bigl\| \u'(\y,s) \bigr\|^2 \,\d\y\,\d s 
\;\; \xrightarrow{\;L^2\;}\;\; k 
\label{eq:ergodicity_prop_hom_B}\\
\frac12 \, \delta^2z\, & \mint{-}_{B_R^{(1)}(t)}\mint{-}_{B_{R'}^{(3)}(\x+(s-t)\overline\u)} \bigl\| \nabla_\y\u'(\y,s)+ \bigl( \nabla_\y\u'(\y,s)\bigr){\vphantom{)}}^{\!\top} \bigr\|^2 \,\d\y\,\d s 
\;\; \xrightarrow{\;L^2\;}\;\; \frac{\eps}{\nu} 
\label{eq:ergodicity_prop_hom_C}
\end{align}
Here 
the notation 
$\mint{-}$ indicates an 
average value, i.e., 
$\mint{-}_{B_r^{(n)}\!(\y_0)} g(\y) \,\d \y$
is defined as  
$(\lambda^n(B_r^{(n)}(\y_0)))^{-1}\linebreak \int_{B_r^{(n)}\!(\y_0)} g(\y) \, \d \y$ if $r\in\R^+$ 
and as 
$g(\y_0)$ if $r=0$. 
The involved integrals are understood as Bochner integrals with values in $L^2(\Pr;\R^3)$ or $L^2(\Pr;\R)$. 
\end{proposition}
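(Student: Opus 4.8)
The plan is to transform the advected local averages into genuine local averages of the homogeneous field $\v$ over a box and then to apply the ergodicity machinery of Appendix~\ref{sec:ergodicity}. First I would carry out the affine substitution $\y = \x + (s-t)\overline{\u} + \delta\sx\,\bm w$, $s = t + \delta\st\,\tau$ in \eqref{eq:ergodicity_prop_hom_A}--\eqref{eq:ergodicity_prop_hom_C}. Since $\u'(\y,s) = \su\,\v\bigl(\tfrac1{\delta\sx}(\y - s\overline{\u}),\tfrac1{\delta\st}s;\sz z\bigr)$, the mean-flow contribution cancels and the moving centre of the spatial ball becomes fixed; with $\x_0 = \tfrac1{\delta\sx}(\x - t\overline{\u})$ and $t_0 = \tfrac{t}{\delta\st}$ one finds that the left-hand side of \eqref{eq:ergodicity_prop_hom_A} equals $\su$ times $\mint{-}_{B_{R/(\delta\st)}^{(1)}(t_0)}\mint{-}_{B_{R'/(\delta\sx)}^{(3)}(\x_0)}\v(\bm w,\tau;\sz z)\,\d\bm w\,\d\tau$, while the chain rule turns $\nabla_\y\u'$ into $\tfrac{\su}{\delta\sx}(\nabla\v)$ at the same arguments, so that the left-hand sides of \eqref{eq:ergodicity_prop_hom_B}, \eqref{eq:ergodicity_prop_hom_C} become $\tfrac{\su^2}{2}$, resp.\ $\tfrac{z\su^2}{2\sx^2}$, times the analogous averages of $\|\v\|^2$, resp.\ $\|\nabla\v + (\nabla\v)^\top\|^2$, over the box $B_{R/(\delta\st)}^{(1)}(t_0)\times B_{R'/(\delta\sx)}^{(3)}(\x_0)$. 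Under the hypothesis $(R/\delta,R'/\delta)\to(L,L')$ the side half-lengths converge to $(L/\st,L'/\sx)\in\overline\R\times\overline\R$, at least one being $+\infty$; and since $\v$ is homogeneous, the $L^2(\Pr)$-distance of each of these averages from a deterministic value is independent of the centre $(\x_0,t_0)$. It therefore suffices to prove, for the box $Q_{a,b}:=B_a^{(1)}(0)\times B_b^{(3)}(\bm 0)$ with $(a,b)\to(L/\st,L'/\sx)$, the three $L^2$-convergences centred at the origin.

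I would then treat each case as the average over $Q_{a,b}$ of a homogeneous, mean-square continuous random field $W$: namely $W=\v$ (with values in $\R^3$) for \eqref{eq:ergodicity_prop_hom_A}, $W=\tfrac{\su^2}{2}\|\v\|^2$ for \eqref{eq:ergodicity_prop_hom_B}, and $W=\tfrac{z\su^2}{2\sx^2}\|\nabla\v + (\nabla\v)^\top\|^2$ for \eqref{eq:ergodicity_prop_hom_C}; mean-square continuity of $W$ in the latter two cases follows from the Gaussianity and mean-square differentiability of $\v$ (Proposition~\ref{prop:properties_v_hom}) via Wick's formula. Because $\E[W]$ is constant, interchanging expectation and Bochner integral gives $\E\bigl[\mint{-}_{Q_{a,b}}W\bigr]=\E[W(\bm 0,0)]$ for all $a,b$, and Proposition~\ref{prop:properties_v_hom} evaluates these means to $\bm 0$, to $\su^2=k$, and to $\tfrac{z\su^2}{2\sx^2}\cdot\tfrac2{\sz z}=\tfrac{\su^2}{\sx^2\sz}=\eps/\nu$, using $\nabla\cdot\v=0$ to reduce $\tfrac12\E\|\nabla\v+(\nabla\v)^\top\|^2$ to $\E\|\nabla\v\|^2=1/(\sz z)$, cf.\ \eqref{eq:dissipation_hom}. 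These are exactly the claimed limits, so the proof reduces to showing $\mathrm{Var}\bigl(\mint{-}_{Q_{a,b}}W\bigr)\to 0$. Writing $W-\E[W]$ through its spectral representation, $\mint{-}_{Q_{a,b}}W-\E[W]=\int_{\R^3\times\R}\phi_b(\kap)\,\psi_a(\omega)\,Z_W(\d\kap,\d\omega)$, where $\phi_b$ and $\psi_a$ are the averages over $B_b^{(3)}(\bm 0)$ and $B_a^{(1)}(0)$ of $e^{i\kap\cdot\bm w}$ and $e^{i\omega\tau}$, both bounded by $1$ and tending pointwise, as the corresponding half-length grows, to the indicators of $\{\kap=\bm 0\}$ and $\{\omega=0\}$. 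Dominated convergence then yields $\limsup\mathrm{Var}\bigl(\mint{-}_{Q_{a,b}}W\bigr)\le\mu_W(\{\kap=\bm 0\})+\mu_W(\{\omega=0\})$, where $\mu_W$ is the (finite) spectral measure of $W$.

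Hence the crux is to show that $\mu_W$ is absolutely continuous with respect to Lebesgue measure on $\R^3\times\R$, so that it carries no mass on the degenerate hyperplanes $\{\kap=\bm 0\}$ and $\{\omega=0\}$. For $W=\v$ this holds since its matrix spectral density $\fx(\|\!\bdot\!\|;\sz z)\ft(\bdot)\P(\bdot)$ lies in $L^1(\R^3\times\R)$ by \eqref{eq:energy_spectrum_1} and \eqref{eq:temporal_correlation}. For the two quadratic fields, Wick's formula expresses the covariance kernel $\Gamma_W$ as a finite sum of products of entries of $\bm C_{\mathrm{x}}(\cdot;\sz z)$ — or of its second $\y$-derivatives, given via \eqref{eq:derivative_of_cov_function} by the $L^1$-functions $\kap\mapsto\kappa_j\kappa_k\fx(\nkap;\sz z)[\P(\kap)]_{lm}$, where integrability uses precisely $\int_0^\infty\kappa^2 E(\kappa;\sz z)\,\d\kappa<\infty$ — multiplied by $C_{\mathrm{t}}^2$; since each factor is the inverse Fourier transform of an $L^1$-function and products and convolutions of $L^1$-functions are again $L^1$, the function $\Gamma_W$ is itself the inverse Fourier transform of an $L^1$-function, so $\mu_W$ has an integrable density. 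The anisotropic nature of the limit — where the box may grow only in the temporal or only in the spatial directions — is exactly what forces one to control $\mu_W$ on an entire hyperplane rather than merely at the origin, and I expect this to be the main obstacle; it is what makes the product structure \eqref{eq:sd_product_approach} together with the moment conditions \eqref{eq:energy_spectrum_1} essential. The remaining points are routine: mean-square continuity of $W$ on the compact boxes $Q_{a,b}$ makes the $L^2(\Pr)$-valued Bochner integrals well defined and justifies the interchanges above, and the auxiliary ergodicity statements of Appendix~\ref{sec:ergodicity} package the dominated-convergence argument. Combining the identification of the means with the vanishing of the variances yields the three asserted convergences.
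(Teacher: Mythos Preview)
Your argument is correct, and the change of variables, the identification of the means via Proposition~\ref{prop:properties_v_hom}, and the absolute-continuity check for $\mu_W$ through Wick's formula and convolutions of $L^1$ densities all go through. One cosmetic point: the bound you write as $\mu_W(\{\kap=\bm 0\})+\mu_W(\{\omega=0\})$ should really be the mass of the single relevant hyperplane (or of the origin if both half-lengths diverge), but since you then show $\mu_W\ll\lambda^4$ this does not matter.

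Your route, however, differs from the paper's. The paper does not compute the spectral measure of the quadratic functionals $W=\|\v\|^2$ or $W=\|\nabla\v+(\nabla\v)^\top\|^2$. Instead it lifts the problem to the path space: with $\nu$ the law of $\w$ and $H=L^2(\nu;\R)$, the translations act as a strongly continuous unitary group on $H$, and Proposition~\ref{prop:mean_ergo_thm} (via Lemma~\ref{lem:abstract_mean_ergo_thm} and Lemma~\ref{lem:ergo}) gives the ergodic limit for \emph{every} $\varphi\in H$ simultaneously, using only that the covariance of the \emph{linear} field $\v$ has an $L^1$ spectral density. So the paper needs absolute continuity just once, for $\v$ itself, and never invokes Wick's formula or the second moment condition $\int\kappa^2E\,\d\kappa<\infty$ at this stage. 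Your approach is more elementary and self-contained---it stays entirely at the level of second moments and avoids the unitary-group machinery---but it exploits the specific quadratic form of the functionals and requires a separate spectral-density computation for each $W$; the paper's approach is more abstract but buys uniformity over all square-integrable functionals, which is what later allows the same framework to handle the inhomogeneous case in Theorem~\ref{thm:ergodicity_inhom} via Corollary~\ref{cor:ergo}. Your closing remark that ``the auxiliary ergodicity statements of Appendix~\ref{sec:ergodicity} package the dominated-convergence argument'' is therefore slightly misleading: your argument does not actually use that appendix, and if you did use Proposition~\ref{prop:mean_ergo_thm} you could dispense with the Wick computations entirely.
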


If the random field 
$\u'$ admits a modification with sufficiently regular sample paths,
then the integrals in Proposition~\ref{prop:ergodicity_hom} can be interpreted as pathwise integrals 
which coincide with the abstract Bochner integrals $\Pr$-almost surely, cf.~Remark~\ref{rem:sample_path_reg}. 
For the sake of clarity, we 
further note that the convergence assertions in Proposition \ref{prop:ergodicity_hom} are interpreted as follows: For every open neighborhood $V$ of the constant random variable on the right-hand side of \eqref{eq:ergodicity_prop_hom_A}, \eqref{eq:ergodicity_prop_hom_B}, or \eqref{eq:ergodicity_prop_hom_C} in $L^2(\Pr;\R^3)$ or $L^2(\Pr;\R)$, there exists an open neighborhood $U$ of $(L,L')$ in $\overline{\R}\times\overline{\R}$ such that the random variable given by the left-hand side of \eqref{eq:ergodicity_prop_hom_A}, \eqref{eq:ergodicity_prop_hom_B}, or \eqref{eq:ergodicity_prop_hom_C} lies in $V$ whenever $(R/\delta, R'/\delta)$ lies in $U\cap(\R\times\R)$. Here $\overline{\R}\times\overline{\R}$ is endowed with the product topology induced by the standard topology on $\overline{\R}$.

\begin{proof}[Proof of Proposition \ref{prop:ergodicity_hom}]
We first focus on the assertions \eqref{eq:ergodicity_prop_hom_A} and \eqref{eq:ergodicity_prop_hom_B}. 
For the purpose of adopting a unified perspective, we represent the average integrals in the form $\mint{-}\mint{-} \phi(\u'(\y,s)) \,\d\y\,\d s$  with suitably chosen functions $\phi \colon \R^3 \to \R$, taking $\phi(\u) = u_l$ with $l \in \{1,2,3\}$ in case \eqref{eq:ergodicity_prop_hom_A} and $\phi(\u) = \|\u\|^2/2$ in case $\eqref{eq:ergodicity_prop_hom_B}$, $\u = ( u_1,u_2,u_3) \in \R^3$. The main idea of the proof is to apply the ergodicity result from Proposition \ref{prop:mean_ergo_thm} in Appendix \ref{sec:ergodicity} to the random field $\w = ( \w(\x,t) )_{(\x,t) \in \R^3 \times \R}$ defined by
\begin{equation}\label{eq:ergo_hom_def_w}
\w(\x,t) = \su \v \Bigl( \frac{1}{\sx} \x, \frac{1}{\st} t; \sz z \Bigr)
\end{equation}
and to choose the functional $\varphi$ in Proposition~\ref{prop:mean_ergo_thm} as the mapping that assigns to each function $\bom\colon\R^3 \times \R\to\R^3$ the value $\varphi(\bom):=\phi(\bom(\bm0,0))$. 
Observe that the assumptions in Model \ref{model:turb_hom*} ensure that $\w$ satisfies the conditions in Proposition \ref{prop:mean_ergo_thm}, implying in particular that the mapping $\R^3 \times \R \ni (\y,s) \mapsto \phi(\w(\y,s)) \in L^2(\Pr;\R)$ is continuous whenever 
$\phi \in L^2(\Pr_{\w(\bm{0},0)};\R)$, where $\Pr_{\w(\bm{0},0)}$ denotes the probability distribution of the random vector $\w(\bm{0},0)$. 
This and the identity $\u'(\y,s) = \w((\y-s\overline{\u})/\delta, s/\delta)$ yield that all appearing Bochner integrals are defined. Next fix $(\x,t) \in \R^3 \times \R$ and note that the change of variables formula establishes that
\begin{align*}
\mint{-}_{B_{R}^{(1)}(t)} &\mint{-}_{B_{R'}^{(3)}(\x + (s-t)\overline{\u})} \phi\bigl( \u'(\y,s) \bigr) \,\d \y \, \d s
\\ &  = \mint{-}_{B_{R/\delta}^{(1)}(0)} \mint{-}_{B_{R'/\delta}^{(3)}(\bm{0})} \phi\bigl( \u'(\x + \delta\y + \delta s \overline{\u}, t+\delta s) \bigr) \,\d \y \, \d s
\\ & = \mint{-}_{B_{R/\delta}^{(1)}(0)} \mint{-}_{B_{R'/\delta}^{(3)}(\bm{0})} \phi\Bigl( \w\Bigl( \frac{1}{\delta}(\x - t \overline{\u}) + \y, \frac{1}{\delta} t + s\Bigr) \Bigr) \,\d \y \, \d s.
\end{align*}
Considering arbitrary sequences $(R_j)_{j\in\N}, (R_j')_{j\in\N} \subset \R_0^+$, $(\delta_j)_{j\in\N}\subset \R^+$ such that $\lim_{j\to\infty} R_j/\delta_j = L$ and $\lim_{j\to\infty} R'_j/\delta_j = L'$, the convergences \eqref{eq:ergodicity_prop_hom_A} and \eqref{eq:ergodicity_prop_hom_B} thus follow by applying Proposition\ref{prop:mean_ergo_thm} (with $\mathcal{R}_j = R_j/\delta_j$, $\mathcal{R}'_j = R'_j/\delta_j$, $\x_j = (\x-t\,\overline{\u})/\delta_j$, and $t_j = t/\delta_j$ in the notation of Proposition~\ref{prop:mean_ergo_thm}) and taking into account that $\E[\w(\bm{0},0)] = \bm{0}$ and $\E[ \|\w(\bm{0},0) \|^2 ]/2 = k$.

The strategy of the proof of \eqref{eq:ergodicity_prop_hom_A} and \eqref{eq:ergodicity_prop_hom_B} can be adapted in order to verify assertion \eqref{eq:ergodicity_prop_hom_C}. Replacing the definition of the random field $\w = ( \w(\y,s) )_{(\y,s) \in \R^3 \times \R}$ in \eqref{eq:ergo_hom_def_w} by $\w(\y,s) = (\su/\sx) ( \nabla_\y \v)(\y/\sx, s/\st ;\sz z)$, employing the identity $\delta \nabla_\y \u'(\y,s) = \w((\y-s \overline{\u})/\delta, s/\delta)$, and using a suitable identification mapping $\bm{\pi} \colon \R^{3\times 3} \to \R^9$, an application of Proposition \ref{prop:mean_ergo_thm} with $\phi \colon \R^9 \to \R$, $\phi(\u) = \| \bm{\pi}^{-1}(\u) + (\bm{\pi}^{-1}(\u))^\top\|^2$, establishes \eqref{eq:ergodicity_prop_hom_C} by reasoning along the lines of the first part of the proof.
\end{proof}

\subsection{Alternative representation formulas for the homogeneous model}\label{subsec:hom_alternative}

The spectral decomposition formula \eqref{eq:u'_integral_representation_hom_1} above represents the homogeneous turbulence field in terms of stochastic Fourier-type integrals \wrt an underlying Gaussian white noise.  
It might be tempting to set up an inhomogeneous model capturing spatio-temporal variations of the characteristic flow quantities by simply allowing the scaling factors $\sx$, $\st$, $\su$, $\sz$ in \eqref{eq:u'_integral_representation_hom_1} to vary in $(\x,t)$ and by directly replacing the linear transport term $x-t\overline\u$ in \eqref{eq:u'_integral_representation_hom_1} with a nonlinear counterpart. 
However, this is not a viable approach as it leads to two major problems. 
First, the presence of the scaling factors $\sx$, $\st$ in the argument of the complex exponential function in \eqref{eq:u'_integral_representation_hom_1} would lead to a loss of invariance of the model \wrt translations of the underlying spatio-temporal coordinate system if $(\x,t)$-dependence of these factors was permitted.  
Second, substituting transport by the mean flow in \eqref{eq:u'_integral_representation_hom_1} with an inhomogeneous analogue would typically distort the spatial oscillations specified by the wave vectors $\kap$ in a significant way, especially if the inhomogeneous flow was considered over large time intervals. In order to avoid these pitfalls, we derive suitable alternative stochastic integral representations for the homogeneous model that allow for a proper transition to the inhomogeneous case. 
We proceed in three steps in Lemmata~\ref{lem:transformation1}--\ref{lem:transformation3} below, each of which involves a specific transformation and employs the auxiliary results on integration \wrt white noise presented in Appendix~\ref{app:white_noise}.

The first step consists in a coordinate transformation and ensures that the scaling factors $\sx$, $\st$ no longer appear in the argument of the complex exponential function but are shifted to the Fourier coefficient. 
This addresses the first problem mentioned above and contributes to maintaining basic invariance properties after progressing to the inhomogeneous case. 
The parameter $\delta$ is included in the transformation regarding $\st$ in order to simplify the subsequent steps. 
We recall that $\wn$ in \eqref{eq:u'_integral_representation_hom_1} is a $\C^3$-valued Gaussian white noise on $\R^3 \times \R$ with structural measure $2 \lambda^3 \otimes \lambda^1$ 
and employ the notation  $\B_0(\R^3 \times \R)$ for the class of Borel sets 
in $\R^3 \times \R$ 
of finite Lebesgue measure. 
\wen

\begin{lemma}\label{lem:transformation1}
Let $\u' = (\u'(\x,t))_{(\x,t) \in \R^3 \times \R}$ be a homogeneous turbulence field in the sense of Model~\ref{model:turb_hom*}, given in dimensionless form by means of the stochastic spectral representation \eqref{eq:u'_integral_representation_hom_1}. 
Then for all $(\x,t)\in\R^3\times \R$ it holds $\Pr$-almost surely that
\begin{equation}\label{eq:alternative_rep_hom_1}
\begin{aligned}
\u'(\x,t) = \su\:\Re\! &  \int_{\R^3\times\R} \exp\Bigl\{ i \Bigl[\frac{1}{\delta}\, \kap\cdot(\x - t\overline{\u}) +  \omega t \Bigr]\Bigr\} 
\\ &  \vphantom{\int}
\, \sx^{3/2} \;\! \fxs\bigl(\sx\nkap; \sz z\bigr) \;\! (\delta\st)^{1/2} \fts\bigl(\delta\st\,\omega\bigr) \;\! \Pk \cdot \wt{\wn}(\d\kap,\d\omega).
\end{aligned}
\end{equation}
Here $\wt{\wn}$ is the $\C^3$-valued Gaussian white noise on $\R^3 \times \R$ with structural measure $2 \lambda^3 \otimes \lambda^1$ defined via transformation of $\wn$ by
\begin{align*}
\wt{\wn}(B) = 
\Bigl(\frac{1}{\sx^{3} \,\delta\st} \Bigr)^{1/2}
\wn \bigl(\bm{\phi}^{-1}(B)\bigr),\qquad \bm{\phi}(\kap,\omega) = \Bigl(\frac{1}{\sx} \kap, \frac{1}{\delta\st} \omega\Bigr),
\end{align*}
$B \in\B_0(\R^3 \times \R)$, 
with coordinate transformation mapping $\bm{\phi} \colon \R^3 \times \R \to \R^3 \times \R$.  
\end{lemma}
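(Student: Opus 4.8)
The plan is to derive \eqref{eq:alternative_rep_hom_1} from the spectral representation \eqref{eq:u'_integral_representation_hom_1} by performing, inside the white noise integral, the linear substitution corresponding to $\bm{\phi}(\kap,\omega) = (\kap/\sx,\,\omega/(\delta\st))$. Since we are in the homogeneous setting, $\su,\sx,\st,\sz\in\R^+$ are fixed positive constants, and $\bm{\phi}$ is a linear bijection of $\R^3\times\R$ with inverse $(\kap,\omega)\mapsto(\sx\kap,\delta\st\,\omega)$ and Jacobian determinant $\sx^{3}\,\delta\st$. Hence $(2\lambda^3\otimes\lambda^1)(\bm{\phi}^{-1}(B)) = \sx^{3}\,\delta\st\,(2\lambda^3\otimes\lambda^1)(B)$ for every Borel set $B$, which shows that the scalar prefactor $(\sx^{3}\,\delta\st)^{-1/2}$ in the definition of $\wt{\wn}$ is exactly the normalization needed for $\wt{\wn}$ to be a $\C^3$-valued Gaussian white noise on $\R^3\times\R$ with structural measure $2\lambda^3\otimes\lambda^1$; this is immediate from the auxiliary results on white noise in Appendix~\ref{app:white_noise}.

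The second step is to invoke the transformation rule for stochastic integrals with respect to white noise from Appendix~\ref{app:white_noise}: for every deterministic matrix-valued integrand $h$ that is square-integrable with respect to $2\lambda^3\otimes\lambda^1$ one has, $\Pr$-almost surely, $\int h\cdot\wt{\wn}(\d\kap,\d\omega) = (\sx^{3}\,\delta\st)^{-1/2}\int (h\circ\bm{\phi})\cdot\wn(\d\kap,\d\omega)$. This follows by first verifying the identity for indicator functions directly from $\wt{\wn}(B) = (\sx^{3}\,\delta\st)^{-1/2}\wn(\bm{\phi}^{-1}(B))$ and then extending by linearity and the white noise isometry. I would then apply it with $h = g\circ\bm{\phi}^{-1}$, where $g$ denotes the $\R^{3\times3}$-valued integrand of \eqref{eq:u'_integral_representation_hom_1}, obtaining $\int g\cdot\wn(\d\kap,\d\omega) = (\sx^{3}\,\delta\st)^{1/2}\int(g\circ\bm{\phi}^{-1})\cdot\wt{\wn}(\d\kap,\d\omega)$ $\Pr$-almost surely; the required square-integrability of $g$, and hence of $g\circ\bm{\phi}^{-1}$ by the change of variables, is ensured by \eqref{eq:energy_spectrum_1}, \eqref{eq:temporal_correlation}, and $\trace\Pk = 2$.

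It then remains to check that $(\sx^{3}\,\delta\st)^{1/2}\,(g\circ\bm{\phi}^{-1})$ is precisely the integrand of \eqref{eq:alternative_rep_hom_1}. Evaluating $g$ at $\bm{\phi}^{-1}(\kap,\omega) = (\sx\kap,\delta\st\,\omega)$, the exponent $\tfrac1\delta\bigl[\tfrac1\sx(\sx\kap)\cdot(\x-t\overline{\u}) + \tfrac1\st(\delta\st\,\omega)t\bigr]$ collapses to $\tfrac1\delta\,\kap\cdot(\x-t\overline{\u}) + \omega t$; the projector is scale invariant, $\P(\sx\kap) = \Pk$ since $\sx>0$; and the square-root factors become $\fxs(\sx\nkap;\sz z)$ and $\fts(\delta\st\,\omega)$. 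Splitting the Jacobian factor as $(\sx^{3}\,\delta\st)^{1/2} = \sx^{3/2}(\delta\st)^{1/2}$ and attaching the two pieces to the respective square roots yields exactly the integrand claimed in \eqref{eq:alternative_rep_hom_1}; multiplying by $\su$ and taking real parts then gives the assertion. I expect the only genuinely delicate point to be the bookkeeping: getting the direction of the substitution right, tracking the factor $(\sx^{3}\,\delta\st)^{1/2}$ so that it lands correctly in the two square roots, and matching the structural measure of $\wt{\wn}$, which is what pins down the normalization constant. There is no analytic obstacle beyond this.
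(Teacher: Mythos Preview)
Your proposal is correct and follows essentially the same approach as the paper: the paper's proof simply observes that $\bm{\phi}$ is a $C^1$-diffeomorphism with constant Jacobian determinant $1/(\sx^3\,\delta\st)$ and invokes the change of variables formula for white noise integrals (Proposition~\ref{prop:pushforward}~b)), which packages exactly the steps you spell out. Your version is just a more explicit unpacking of that proposition, including the verification that $\wt{\wn}$ has the right structural measure and the bookkeeping of how the Jacobian factor distributes into the integrand.
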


\begin{proof}
In view of the fact that $\bm{\phi}$ is a $C^1$-diffeomorphism with constant Jacobian determinant $\det(D\bm\phi)(\x,t)=1/(\sx^3\;\!\delta\st)$, the assertion is an immediate consequence of the change of variables formula for white noise integrals in Proposition \ref{prop:pushforward} b). 
\end{proof}

The second step translates integration in Fourier space \wrt the temporal frequency $\omega$ into integration on the time axis \wrt an auxiliary time variable $s$. This transformation addresses the second major problem mentioned above as it paves the way for localizing in time the effect of advection by the mean flow. Interpreting the integral in \eqref{eq:alternative_rep_hom_1} as the inverse Fourier transform of the product of a regular deterministic function and the generalized random field $\widetilde\wn$, the employed transformation reflects the well-known convolution theorem for Fourier transforms. 

\begin{lemma}\label{lem:transformation2}
Let $\u' = (\u'(\x,t))_{(\x,t) \in \R^3 \times \R}$ be a homogeneous turbulence field in the sense of Model~\ref{model:turb_hom*}, given in dimensionless form by means of the transformed representation \eqref{eq:alternative_rep_hom_1}. 
Then for all $(\x,t)\in\R^3\times \R$ it holds $\Pr$-almost surely that
\begin{equation}\label{eq:alternative_rep_hom_2}
\begin{aligned}
\u'(\x,t)  = \frac{\su}{(2\pi)^{1/2}} \,\Re\! \int_{\R^3\times\R}  \Bigl( & \frac1{\delta\st} \Bigr)^{1/2} \eta \Bigl(\frac{1}{\delta\st}(t-s)\Bigr) \;\!
\exp\Bigl\{ i \frac{1}{\delta}\, \kap\cdot(\x - t \overline{\u}) \Bigr\} 
\\ & \vphantom{\int} 
\sx^{3/2} \;\! \fxs \bigl(\sx\nkap;\sz z\bigr) \;\! \Pk \cdot \bigl[(\mathcal{I} \otimes \mathcal{F}^{-1})(\wt{\wn})\bigr](\d\kap,\d s).
\end{aligned}
\end{equation} 
Here $\eta=(2\pi)^{-1/2}\mathcal F^{-1}(\fts)$ is defined as in \eqref{eq:eta},  
and $(\mathcal{I} \otimes \mathcal{F}^{-1})(\wt{\wn})$ is the $\C^3$-valued Gaussian white noise on $\R^3 \times \R$ with structural measure $4\pi \lambda^3 \otimes \lambda^1$ 
specified 
via transformation of $\wt\wn$ by
\begin{align*}
\bigl[(\mathcal{I} \otimes \mathcal{F}^{-1})(\wt{\wn})\bigr](B) = \int_{\R^3\times\R} \bigl[(\mathcal{I} \otimes \mathcal{F}^{-1})(\indicator{B})\bigr](\kap,s) \,\wt{\wn}(\d\kap,\d s), 
\end{align*}
$B \in\B_0(\R^3 \times \R)$, with partial inverse Fourier transform $\mathcal{I} \otimes\mathcal{F}^{-1} \colon L^2(\lambda^3 \otimes \lambda^1;\C) \to L^2(\lambda^3 \otimes \lambda^1;\C)$ given as the tensor product of the identity mapping $\mathcal{I}\colon L^2(\lambda^3;\C) \to L^2(\lambda^3;\C)$ and the inverse Fourier transform $\mathcal{F}^{-1}\colon L^2(\lambda^1;\C) \to L^2(\lambda^1;\C)$.
\end{lemma}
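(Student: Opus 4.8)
The plan is to read \eqref{eq:alternative_rep_hom_1} as a partial inverse Fourier transform in the temporal variable $\omega$ and to make the convolution-theorem heuristic sketched above precise by shifting this transform off the deterministic integrand and onto the underlying white noise, using the mapping properties of white noise integrals from Appendix~\ref{app:white_noise}. Since the normalization convention fixed in the introduction gives $\|\mathcal{F}^{-1}(\psi)\|_{L^2(\lambda^1)}^2 = 2\pi\,\|\psi\|_{L^2(\lambda^1)}^2$, the tensor operator $\mathcal{I}\otimes\mathcal{F}^{-1}$ scales the $L^2(\lambda^3\otimes\lambda^1)$-norm by the factor $(2\pi)^{1/2}$; the auxiliary results on white noise integration thus ensure, first, that $(\mathcal{I}\otimes\mathcal{F}^{-1})(\wt{\wn})$ as defined is indeed a $\C^3$-valued Gaussian white noise with structural measure $2\pi\cdot(2\lambda^3\otimes\lambda^1) = 4\pi\,\lambda^3\otimes\lambda^1$, and second, the substitution identity
\[
\int_{\R^3\times\R} g(\kap,s)\cdot\bigl[(\mathcal{I}\otimes\mathcal{F}^{-1})(\wt{\wn})\bigr](\d\kap,\d s)
= \int_{\R^3\times\R} \bigl[(\mathcal{I}\otimes\mathcal{F}^{-1})(g)\bigr](\kap,\omega)\cdot\wt{\wn}(\d\kap,\d\omega)
\]
for every $g$ in the relevant $L^2$-space; this identity is obtained by verifying it for simple functions $g$ (where it reduces to the very definition of $(\mathcal{I}\otimes\mathcal{F}^{-1})(\wt{\wn})$ together with the linearity of both integrals) and extending by the respective isometries and a density argument.

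I would then apply this to the integrand $g$ of \eqref{eq:alternative_rep_hom_2}, which factorizes as $g(\kap,s) = h_1(\kap)\,h_2(s)$ with $h_1(\kap) = \exp\{i\,\delta^{-1}\kap\cdot(\x - t\overline{\u})\}\,\sx^{3/2}\,\fxs(\sx\nkap;\sz z)\,\Pk$ and $h_2(s) = (\delta\st)^{-1/2}\,\eta(\delta^{-1}\st^{-1}(t-s))$, so that $[(\mathcal{I}\otimes\mathcal{F}^{-1})(g)](\kap,\omega) = h_1(\kap)\,[\mathcal{F}^{-1}(h_2)](\omega)$. The substitution $r = \delta^{-1}\st^{-1}(t-s)$ gives $[\mathcal{F}^{-1}(h_2)](\omega) = (\delta\st)^{1/2}\,e^{i\omega t}\int_\R e^{-i\delta\st\omega r}\eta(r)\,\d r$, and applying $\mathcal{F}$ to the defining relation \eqref{eq:eta} (and using $\mathcal{F}\mathcal{F}^{-1} = \mathcal{I}$) yields $\int_\R e^{-i\delta\st\omega r}\eta(r)\,\d r = 2\pi\,[\mathcal{F}(\eta)](\delta\st\omega) = (2\pi)^{1/2}\,\fts(\delta\st\omega)$. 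Hence $(\mathcal{I}\otimes\mathcal{F}^{-1})(g)$ is exactly $(2\pi)^{1/2}$ times the integrand of \eqref{eq:alternative_rep_hom_1}; inserting this into the substitution identity turns the right-hand side of \eqref{eq:alternative_rep_hom_2} into $\su\,\Re\!\int_{\R^3\times\R}(\cdots)\cdot\wt{\wn}(\d\kap,\d\omega)$, which equals $\u'(\x,t)$ by \eqref{eq:alternative_rep_hom_1}, the prefactor $(2\pi)^{-1/2}$ in \eqref{eq:alternative_rep_hom_2} cancelling the factor $(2\pi)^{1/2}$ produced by the transform.

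It remains to record the integrability needed for the white noise integrals to be defined: the temporal factor contributes $\int_\R(\delta\st)^{-1}\eta^2(\delta^{-1}\st^{-1}(t-s))\,\d s = \int_\R\eta^2 = 1$ by \eqref{eq:eta}, while the spatial factor, after the substitution $\bm{\mu} = \sx\kap$ and using $\trace\Pk = 2$ for $\kap\neq\bm0$ together with the first identity in \eqref{eq:energy_spectrum_1}, integrates to a finite constant; so $g\in L^2(4\pi\lambda^3\otimes\lambda^1;\C^{3\times 3})$ and, likewise, the integrand of \eqref{eq:alternative_rep_hom_1} lies in $L^2(2\lambda^3\otimes\lambda^1;\C^{3\times 3})$, and the $\Re$ operation is accommodated by the white noise framework of Appendix~\ref{app:white_noise}. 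The \emph{main obstacle} is not any single estimate but the careful bookkeeping — correctly identifying the mapping behaviour of $\mathcal{I}\otimes\mathcal{F}^{-1}$ acting on white noise, keeping track of which coordinates the operator acts on, and tracing the non-unitary normalization constant $(2\pi)^{1/2}$ through every step so that it cancels precisely with the prefactor in \eqref{eq:alternative_rep_hom_2}.
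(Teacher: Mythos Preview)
Your proposal is correct and follows essentially the same route as the paper: both invoke the isometric-transformation result for white noise integrals (Proposition~\ref{prop:linear_isometry_on_white_noise_integral}) with $\mathcal{T}=\mathcal{I}\otimes\mathcal{F}^{-1}$ to identify the structural measure $4\pi\lambda^3\otimes\lambda^1$ and to shift the partial Fourier transform between integrand and noise, then reduce the verification to the explicit one-dimensional Fourier computation linking $\eta$ and $\fts$ via \eqref{eq:eta}. The only cosmetic difference is direction---you start from the right-hand side of \eqref{eq:alternative_rep_hom_2} and transform back to \eqref{eq:alternative_rep_hom_1}, whereas the paper writes $\wt{\wn}=(\mathcal{I}\otimes\mathcal{F})\bigl((\mathcal{I}\otimes\mathcal{F}^{-1})(\wt{\wn})\bigr)$ and pushes $\mathcal{I}\otimes\mathcal{F}$ onto the integrand of \eqref{eq:alternative_rep_hom_1}---but the underlying computation and the bookkeeping of the $(2\pi)^{1/2}$ factor are identical.
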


\begin{proof}
Let us first remark that the definition of $(\mathcal{I} \otimes \mathcal{F}^{-1})(\wt{\wn})$ above corresponds to the application of the adjoint operator $(\mathcal{I} \otimes \mathcal{F}^{-1})'$ to the generalized random field $\wt{\wn}$, cf.~Appendix~\ref{subsec:appendix_transformation_results}.   
In this sense it is consistent with the usual definition of Fourier transforms in the theory of  generalized functions. 
Moreover, note that $(2\pi)^{-1/2}\mathcal{I} \otimes \mathcal{F}^{-1}$ is an isometric operator on $ L^2(\lambda^3 \otimes \lambda^1;\C)$ with inverse $(2\pi)^{1/2}\mathcal{I} \otimes \mathcal{F}$. 
The fact that $\wt\wn$ is a Gaussian white noise on $\R^3\times\R$ with structural measure $2\lambda^3\otimes\lambda^1$ and an application of Proposition~\ref{prop:linear_isometry_on_white_noise_integral} therefore imply that $(\mathcal{I} \otimes \mathcal{F}^{-1})(\wt{\wn})$ is a Gaussian white noise on $\R^3\times\R$ with structural measure $4\pi\lambda^3\otimes\lambda^1$. 
Employing this and the identity 
$(\mathcal{I} \otimes \mathcal{F})((\mathcal{I} \otimes \mathcal{F}^{-1})(\wt{\wn}))= \wt{\wn}$ 
in combination with the representation formula \eqref{eq:alternative_rep_hom_1} and a further application of Proposition~\ref{prop:linear_isometry_on_white_noise_integral}, we obtain that
\begin{align*}
\u'(\x,t)  = \su \:\Re \!\int_{\R^3\times\R}  \mathcal{F} \Bigl( & \exp\bigl\{ i (\bdot) t\bigr\} \, (\delta\st)^{1/2} \fts(\delta\st \bdot) \Bigr)(s) \;\!
\exp\Bigl\{ i \frac{1}{\delta} \kap\cdot(\x - t\overline{\u}) \Bigr\}  
\\ & \vphantom{\int}
\quad \sx^{3/2} \;\! \fxs \bigl(\sx\nkap; \sz z\bigr) \;\! \Pk \cdot \bigl[(\mathcal{I} \otimes\mathcal{F}^{-1})(\wt{\wn})\bigr](\d\kap,\d s).
\end{align*}
The identity in \eqref{eq:alternative_rep_hom_2} then follows from 
the observation that
\begin{align*}
\mathcal{F} \Bigl(  \exp\bigl\{ i (\bdot) t\bigr\} \, (\delta\st)^{1/2} \fts(\delta\st \bdot) \Bigr)(s) & = \lim_{R\to\infty} \frac{1}{{2\pi}} \int_{-R}^R \exp\bigl\{ i \omega ( t - s) \bigr\} (\delta\st)^{1/2} \fts(\delta\st \omega)\, \d \omega
\\ & = \frac{1}{{2\pi}}\Bigl(\frac{1}{\delta\st}\Bigr)^{1/2} \lim_{R\to \infty} \int_{-R}^R \exp\Bigl\{ i \omega \frac{1}{\delta\st}(t-s)\Bigr\} \fts( \omega) \,\d \omega
\\ & = \frac{1}{{2\pi}}\Bigl(\frac{1}{\delta\st}\Bigr)^{1/2} \Bigl[  \mathcal{F}^{-1}\bigl(\fts\,\bigr)\Bigr] \Bigl(\frac{1}{\delta\st}(t-s)\Bigr),
\end{align*}
where the limit is taken in $L^2(\d s;\C)$. 
\end{proof}
 
In the third step we synchronize the time variables appearing in \eqref{eq:alternative_rep_hom_2} in the argument of the time integration kernel  $\eta$ and in the traveling plane wave $\exp\{i\delta^{-1}\kap\cdot(\x-t\overline\u)\}$. 
In view of the fact that the function $\eta$ typically attains its maximal absolute value at zero and is rapidly decaying, this transformation establishes the localization in time prepared in the previous step.  We refer to Figure~\ref{fig:mean_flow} and Remark~\ref{rem:model_turb_inhom} below for a discussion of the localization aspect in the inhomogeneous context.   
To simplify the resulting representation formula, 
we 
additionally substitute 
the transformed white noise $(2\pi)^{-1/2}(\mathcal{I} \otimes \mathcal{F}^{-1})(\wt{\wn})$ 
involved in \eqref{eq:alternative_rep_hom_2} with the original white noise $\wn$ sharing the same structural 
measure.

\begin{lemma}\label{lem:transformation3}
Let $\u' = (\u'(\x,t))_{(\x,t) \in \R^3 \times \R}$ be a homogeneous turbulence field in the sense of Model~\ref{model:turb_hom*}, given in dimensionless form by means of the transformed representation 
\eqref{eq:alternative_rep_hom_2}, and let $\widetilde{\u}' = (\widetilde{\u}'(\x,t))_{(\x,t) \in \R^3 \times \R}$ be the 
random field given by 
\begin{equation}\label{eq:u'_integral_representation_hom_5}
\begin{aligned}
\widetilde{\u}'(\x,t) = \su\: \Re\!  \int_{\R^3\times\R} \Bigl(\frac1{\delta\st}\Bigr)^{1/2} \eta & \Bigl( \frac{1}{\delta\st}(t-s)\Bigr) 
\exp\Bigl\{ i \frac{1}{\delta}\, \kap\cdot\bigl(\x - (t-s)\overline{\u}\bigr) \Bigr\} 
\\ & \vphantom{\int}
\quad 
\sx^{3/2} \;\! \fxs \bigl(\sx\nkap; \sz z\bigr) \;\! \Pk \cdot \wn(\d\kap,\d s).
\end{aligned}
\end{equation}
Then $\u'$ and $\widetilde{\u}'$ have the same finite-dimensional distributions. 
\end{lemma}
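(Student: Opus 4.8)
The plan is to realize the ``synchronization'' of time variables in \eqref{eq:alternative_rep_hom_2} versus \eqref{eq:u'_integral_representation_hom_5} as one more transformation of the driving white noise, in the spirit of Lemmata~\ref{lem:transformation1} and~\ref{lem:transformation2}. The key is the elementary identity $\x - t\,\overline{\u} = \bigl(\x - (t-s)\overline{\u}\bigr) - s\,\overline{\u}$, which splits the plane wave in \eqref{eq:alternative_rep_hom_2} as
\[
\exp\Bigl\{ i\tfrac1\delta\,\kap\cdot(\x - t\,\overline{\u})\Bigr\}
= \exp\Bigl\{ i\tfrac1\delta\,\kap\cdot\bigl(\x - (t-s)\overline{\u}\bigr)\Bigr\}\;\exp\Bigl\{-i\tfrac1\delta\,\kap\cdot s\,\overline{\u}\Bigr\},
\]
thereby isolating the factor $m(\kap,s) := \exp\{-i\delta^{-1}\kap\cdot s\,\overline{\u}\}$, which is unimodular and, crucially, \emph{independent of $(\x,t)$}. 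Inserting this into \eqref{eq:alternative_rep_hom_2}, the field $\widetilde{\u}'$ of \eqref{eq:u'_integral_representation_hom_5} is obtained precisely by absorbing $m$, together with the scalar prefactor $(2\pi)^{-1/2}$, into the white noise.

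To make this rigorous I would set
\[
\wn(B) := (2\pi)^{-1/2}\!\!\int_{\R^3\times\R} m(\kap,s)\,\indicator{B}(\kap,s)\,\bigl[(\mathcal{I}\otimes\mathcal{F}^{-1})(\wt{\wn})\bigr](\d\kap,\d s),
\qquad B\in\B_0(\R^3\times\R),
\]
and invoke the white-noise transformation results of Appendix~\ref{app:white_noise}: since $|m|\equiv 1$, multiplication by $m$ is a unitary operator on $L^2(\lambda^3\otimes\lambda^1;\C)$ acting componentwise on $\C^3$-valued integrands, so Proposition~\ref{prop:linear_isometry_on_white_noise_integral} (applied exactly as in the proof of Lemma~\ref{lem:transformation2}) together with the elementary fact that rescaling a white noise by a constant $c$ rescales its structural measure by $c^2$ shows that $\wn$ is again a $\C^3$-valued Gaussian white noise, now with structural measure $(2\pi)^{-1}\cdot 4\pi\,\lambda^3\otimes\lambda^1 = 2\,\lambda^3\otimes\lambda^1$, and that $(2\pi)^{-1/2}\!\int m\,g\cdot[(\mathcal{I}\otimes\mathcal{F}^{-1})(\wt{\wn})](\d\kap,\d s) = \int g\cdot\wn(\d\kap,\d s)$ for every admissible matrix-valued integrand $g$. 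Substituting back, $\u'(\x,t)$ coincides $\Pr$-almost surely, for each $(\x,t)$, with the right-hand side of \eqref{eq:u'_integral_representation_hom_5} built from this particular $\wn$; since the law of the field defined by \eqref{eq:u'_integral_representation_hom_5} does not depend on the choice of white noise with structural measure $2\,\lambda^3\otimes\lambda^1$ (as in the proof of Proposition~\ref{prop:sprectral_rep_v_hom}), equality of finite-dimensional distributions follows.

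Since only equality in law is asserted, one may equally bypass the explicit construction of $\wn$ and compare covariance functions, in analogy with the proof of Proposition~\ref{prop:sprectral_rep_v_hom}: both $\u'$ and $\widetilde{\u}'$ are centered Gaussian white-noise integrals of deterministic kernels, and the covariance of $\u'$ is already \eqref{eq:cov_u'_hom}. Applying the covariance identity for white-noise integrals (Corollary~\ref{cor:covmain}) to \eqref{eq:u'_integral_representation_hom_5} and using $\P(\kap)\P(\kap)^{\top} = \P(\kap)$ and $(\fxs(\sx\nkap;\sz z))^2 = \fx(\sx\nkap;\sz z)$, the covariance $\E[\widetilde{\u}'(\x,t)\otimes\widetilde{\u}'(\xalt,\talt)]$ reduces to an integral over $(\kap,s)$ whose exponent is $i\delta^{-1}\kap\cdot\bigl[(\x-(t-s)\overline{\u})-(\xalt-(\talt-s)\overline{\u})\bigr] = i\delta^{-1}\kap\cdot\bigl[(\x-\xalt)-(t-\talt)\overline{\u}\bigr]$ --- the key cancellation, in which the auxiliary variable $s$ drops out of the exponent. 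The $s$-integral then decouples and equals $(\eta * \eta)\bigl(\tfrac{t-\talt}{\delta\st}\bigr) = C_{\mathrm t}\bigl(\tfrac{t-\talt}{\delta\st}\bigr)$ via the substitution $r = (t-s)/(\delta\st)$, the symmetry of $\eta$, and \eqref{eq:eta}; the $\kap$-integral, after substituting $\kap\mapsto\sx\kap$ and using the homogeneity $\P(\sx^{-1}\kap) = \P(\kap)$, equals $\bm C_{\mathrm x}\bigl(\tfrac{(\x-\xalt)-(t-\talt)\overline{\u}}{\delta\sx};\sz z\bigr)$ by \eqref{eq:correlation_functions}. This reproduces \eqref{eq:cov_u'_hom} exactly. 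Taking $\xalt = \x$, $\talt = t$ in the same computation, combined with Proposition~\ref{prop:properties_v_hom}, additionally shows that the kernel in \eqref{eq:u'_integral_representation_hom_5} is square-integrable, so that $\widetilde{\u}'$ is well defined.

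The argument is essentially bookkeeping, and I do not expect a genuine obstacle; the two points that need care are the reconciliation of the two structural measures --- the factor $4\pi$ carried by $(\mathcal{I}\otimes\mathcal{F}^{-1})(\wt{\wn})$ against the target $2$, absorbed precisely by the prefactor $(2\pi)^{-1/2}$ already present in \eqref{eq:alternative_rep_hom_2} --- and, in the covariance route, the correct handling of the real part together with the properness of the complex white noise, so that the covariance collapses to $\tfrac12\,\Re$ of the $L^2$-pairing of the kernels and the surviving factor $2$ from the structural measure cancels the $\tfrac12$.
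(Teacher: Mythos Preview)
Your proposal is correct, and your second route (covariance comparison) is essentially the paper's argument: the paper defines the intermediate field $\bm w$ with the synchronized exponent, observes that both $\u'$ and $\bm w$ are centered Gaussian, and notes that in the covariance formula of Lemma~\ref{lem:covmain} the extra factor $\exp\{i\delta^{-1}\kap\cdot s\,\overline\u\}$ cancels against its complex conjugate precisely because it is independent of $(\x,t)$; it then replaces $(2\pi)^{-1/2}(\mathcal I\otimes\mathcal F^{-1})(\wt\wn)$ by any white noise $\wn$ with structural measure $2\lambda^3\otimes\lambda^1$. The paper stops at this cancellation and does not unwind the covariance all the way back to \eqref{eq:cov_u'_hom} as you do, but the content is the same.

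Your first route---absorbing the unimodular multiplier $m(\kap,s)=\exp\{-i\delta^{-1}\kap\cdot s\,\overline\u\}$ together with $(2\pi)^{-1/2}$ into the noise via Proposition~\ref{prop:linear_isometry_on_white_noise_integral}---is a legitimate and slightly more constructive variant: it produces a specific $\wn$ for which \eqref{eq:u'_integral_representation_hom_5} holds $\Pr$-almost surely (not merely in law), which is a marginally stronger statement than what the lemma asserts. The paper does not take this route, but it is entirely in the spirit of Lemmata~\ref{lem:transformation1}--\ref{lem:transformation2} and your handling of the structural-measure bookkeeping ($4\pi\to 2$ via the factor $(2\pi)^{-1/2}$) is correct.
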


\begin{proof}
Observe that the transformed 
white noise $(2\pi)^{-1/2}(\mathcal{I} \otimes \mathcal{F}^{-1})(\wt{\wn})$ 
in \eqref{eq:alternative_rep_hom_2} and the original $\wn$ 
are both $\C^3$-valued Gaussian white noises with structural measure $2\lambda^3\otimes\lambda^1$. 
Moreover, 
if we identify $(2\pi)^{-1/2}(\mathcal{I} \otimes \mathcal{F}^{-1})(\wt{\wn})$  in \eqref{eq:alternative_rep_hom_2} with $\wn$ in \eqref{eq:u'_integral_representation_hom_5}, 
then the right-hand side of \eqref{eq:u'_integral_representation_hom_5} differs from 
the right-hand side of \eqref{eq:alternative_rep_hom_2} only in the argument of the exponential function, 
resulting in an additional factor $\exp\{ i \delta^{-1} \kap \cdot s\overline{\u} \}$ in the integrand. 
As this factor does not depend on $(\x,t)$ and has absolute value one, 
the covariance formula in Lemma~\ref{lem:covmain} ensures that 
the random fields $\u'$ and $\widetilde{\u}'$ 
have 
the same covariance structure. 
Since both random fields are centered and Gaussian, this implies the claim. 
\end{proof}

The results above reveal that the representation formulas \eqref{eq:u'_integral_representation_hom_1} and \eqref{eq:u'_integral_representation_hom_5} are equivalent in the sense 
that the distributions of the random fields $\u'$ and $\widetilde{\u}'$ coincide. 
In what follows, the representation \eqref{eq:u'_integral_representation_hom_5} plays a key role as its specific structure permits the inclusion of spatio-temporal variations of the characteristic flow quantities.


\section{Inhomogeneous turbulence}\label{sec:inhom}

Building on the foregoing preparations, in this section we present our random field model for the reconstruction of inhomogeneous turbulence. The model is introduced and discussed in Subsection~\ref{subsec:inhom_model}, 
while 
Subsection~\ref{subsec:inhom_analysis} deals with its mathematical analysis with regard to characteristic turbulent flow properties and spatio-temporal ergodicity. 
The
respective results are given in Theorem~\ref{thm:kin_diss_div} and Theorem~\ref{thm:ergodicity_inhom} below.

\subsection{Random field modeling of inhomogeneous turbulence}\label{subsec:inhom_model}

The starting point for our inhomogeneous turbulence model is the representation formula \eqref{eq:u'_integral_representation_hom_5} for the homogeneous and spatially isotropic model above. 
Its specific form allows for the following extensions. 
\smallskip \\
\noindent
\textbf{Inhomogeneous scaling:} 
In order to account for the macroscopic variability of the 
turbulence scales and the turbulence Reynolds number, 
we replace the constant scaling factors $\sx$, $\st$, $\su$, and $\sz$ in \eqref{eq:u'_integral_representation_hom_5} by the respective $(\x,t)$-dependent flow quantities $\sx(\x,t)$, $\st(\x,t)$, $\su(\x,t)$, and $\sz(\x,t)$. This guarantees an adequate  $(\x,t)$-dependent scaling of the spatial energy spectrum and the temporal correlations.
\smallskip \\
\noindent
\textbf{Non-uniform advection:} 
To capture possibly nonlinear advection by the mean flow from an Eulerian perspective, we further replace the linear transport term
$\x - (t-s)\overline{\u} = \x + (s-t) \overline{\u}$ 
appearing in \eqref{eq:u'_integral_representation_hom_5} by the solution $\flow(s;\x,t)$ to the integral equation
\begin{equation}\label{eq:integral_equation_xi}
\flow(s;\x,t) = \x + \int_t^s\overline{\u} \bigl( \flow(r;\x,t), r \bigr) \,\d r, \quad s \in \R,
\end{equation}
describing for every $(\x,t) \in \R^3 \times \R$  a trajectory with value $\x$ at time $t$ that evolves along the mean velocity field $\overline{\u}$. 
We remark that the transformation $\x\mapsto \flow(s;\x,t)$ is the inverse mapping of  $\x\mapsto \flow(t;\x,s)$ and refer to the illustration in Figure~\ref{fig:mean_flow} below.
\smallskip \\
\noindent
\textbf{Directional weightings:} 
While the extensions above constitute the core of our approach to inhomogeneous turbulence, 
we also allow for 
anisotropic one-point velocity correlations by 
incorporating a linear transformation on the noise vector $\wn(\d\kap,\d s)$ in \eqref{eq:u'_integral_representation_hom_5} 
in terms of 
a matrix $\bm{L}(\x,t)$ with $\|\bm{L}(\x,t)\|^2=3$. This 
anisotropy factor 
provides control of directional weightings and covers the isotropic case for $\bm{L}(\x,t)=\bm I$ being the identity matrix.
\smallskip 

We summarize and specify our framework for random field modeling of inhomogeneous turbulence as follows. 
As discussed in Remark \ref{rem:model_turb_inhom} below, the two-scale perspective reflected in the presence of the turbulence scale ratio $\delta \ll 1$ is crucial for the justification and applicability of our model.

\begin{model}[Inhomogeneous turbulence field]\label{model:turb_inhom}
Let $\u' = (\u'(\x,t))_{(\x,t) \in \R^3 \times \R}$ be a centered, $\R^3$-valued Gaussian random field of the form
\begin{equation}\label{eq:u'_inhom}
\begin{aligned}
\u'(\x,t) = \su(\x,t)\:\Re\!  \int_{\R^3\times\R}&  \Bigl(\frac1{\delta\st(\x,t)}\Bigr)^{1/2}\eta\Bigl(\frac{1}{\delta\st(\x,t)}(t-s)\Bigr) 
\exp\Bigl\{ i \frac{1}{\delta}\, \kap\cdot \flow(s;\x,t) \Bigr\} 
\\ & \vphantom{\int} \sx^{3/2}(\x,t) \;\! \fxs \bigl(\sx(\x,t)\nkap; \sz(\x,t)z\bigr) \;\!  
\Pk \cdot \bm{L}(\x,t) \cdot \wn(\d\kap,\d s),
\end{aligned}
\end{equation}
where the characteristic numbers $\delta$, $z \in \R^+$ and the scaling functions $\sx$, $\st$, $\su$, $\sz \colon \R^3 \times \R \to \R^+$ are given by \eqref{eq:z_delta} and \eqref{eq:sigma_x_t_u_z}, and the following is assumed:
\begin{enumerate}[label= \emph{\textbf{\alph*)}}]
\item The function 
$\R^+\times\R^+ \ni (\kappa,\zeta) \mapsto \fx(\kappa;\zeta) \in \R_0^+$ 
is  continuously differentiable and the associated energy spectrum $E(\kappa;\zeta) = 4 \pi \kappa^2 \fx(\kappa;\zeta)$ fulfills the integral conditions \eqref{eq:energy_spectrum_1}, i.e.,
\begin{align}\label{eq:basic_integral_prop_fx}
\int_{\R^3} \fx(\nkap;\zeta) \,\d\kap = 1, \quad \int_{\R^3} \nkap^2 \fx(\nkap;\zeta) \, \d\kap = \frac{1}{2 \zeta}.
\end{align}
\item The time integration kernel $\eta \colon \R \to \R$ is continuously differentiable and satisfies
\begin{align}\label{eq:basic_integral_prop_eta}
\int_\R \eta^2(s) \,\d s = 1.
\end{align}
\item For every $(\x,t) \in \R^3 \times \R$ the mean flow function $\flow(\bdot;\x,t) \colon \R \to \R^3$ is continuous and solves the integral equation \eqref{eq:integral_equation_xi}.
\vspace{1mm}
\item The flow quantities $k,\eps,\nu \colon \R^3 \times \R \to \R^+$ and $\overline{\u}\colon \R^3 \times \R \to \R^3$ -- introduced in \eqref{eq:scaling} and entering the definitions of $\sx$, $\st$, $\su$, $\sz$, and $\flow$ via \eqref{eq:sigma_x_t_u_z} and \eqref{eq:integral_equation_xi} -- are continuous in $(\x,t)$, differentiable in $\x$, and such that 
$\nabla_\x \overline{\u}$, $\nabla_\x k$, $\nabla_\x \eps$, $\nabla_\x \nu$ are continuous in $(\x,t)$. 
The mean velocity gradient $\nabla_\x \overline{\u} \colon \R^3 \times \R \to \R^{3 \times 3}$ is bounded. 
\vspace{1mm}
\item 
The anisotropy function  $\bm{L}\colon \R^3 \times \R \to \R^{3\times 3}$ is continuous in $(\x,t)$, 
differentiable in $\x$, and such that $\nabla_{\x}\bm{L}$ is continuous in $(\x,t)$. Moreover, it satisfies 
$\|\bm{L}(\x,t)\|^2=3$.
\vspace{1mm}
\item $\wn$ is a $\C^3$-valued Gaussian white noise on $\R^3 \times \R$ with structural measure $2 \lambda^3 \otimes \lambda^1$ in the sense of Definition \ref{def:gaussian_white_noise}.
\end{enumerate}
\vspace{1mm}
As before, $\Pk \in \R^{3 \times 3}$ in \eqref{eq:u'_inhom} denotes the projector onto the orthogonal complement of $\kap$. 
In addition to the assumptions above, we suppose that the following technical integrability conditions related to the spatial mean-square differentiability of $\u'$ are fulfilled:
\vspace{1mm}
\begin{enumerate}
\item[\emph{\textbf{g)}}] For every $(\x,t) \in \R^3 \times \R$ there exists 
a ball $B_r(\x,t)$ of radius $r=r(\x,t)\in \R^+$
such that
\begin{equation}\label{eq:assumption_eta}
\int_\R s^2 \etad^2(s)\, \d s < \infty, \qquad  \int_{\R} s^2 \exp\Bigl\{ 2 \| \nabla_\x \overline{\u} \|_\infty \delta \st(\x,t) |s| \Bigr\} \eta^2(s) \,\d s < \infty,
\end{equation}
and
\begin{equation}\label{eq:assumption_deriv_fxsbb}
\int_{\R^3} \sup_{(\y,s) \in B_r(\x,t)} \Bigl\| \nabla_{\y} \Bigl( \sx^{3/2}(\y,s) \fxs\bigl(\sx(\y,s) \nkap; \sz(\y,s)z \bigr) \Bigr) \Bigr\|^2 \d \kap < \infty.
\end{equation}
\end{enumerate}
\vspace{.5mm}
In the described situation, we refer to $\u'$ as an \emph{inhomogeneous turbulence field}.
\end{model}
\vspace{.5mm}

Note that an application of the change of variables formula 
for spherical coordinates 
in Corollary~\ref{cor:polar} to the stochastic integral \eqref{eq:u'_inhom} in Model~\ref{model:turb_inhom} 
implies the alternative representation \eqref{eq:u'_inhom_intro} 
given in 
the introductory section. 
Just as in 
the homogeneous case \eqref{eq:v_integral_representation_2}, 
the underlying white noise 
$\wn(\d\kappa,\d\btheta,\d\omega)$ 
in \eqref{eq:u'_inhom_intro} 
is obtained by transformation of 
$\wn(\d\kap,\d\omega)$ 
in \eqref{eq:u'_inhom}.  
Moreover, observe that 
the time integration kernel $\eta$ described in item b) in Model~\ref{model:turb_inhom} is not necessarily given in terms of a spectral density $\ft$ as in the homogeneous case~\eqref{eq:eta}. While the assumptions in the homogeneous model imply that $\eta$ in the representation formula \eqref{eq:u'_integral_representation_hom_5} is symmetric and satisfies $\int_\R \eta^2(s) \,\d s = 1$, only the latter property is imposed in Model~\ref{model:turb_inhom}. 

For the sake of presentation and to shorten the proofs we introduce the abbreviations 
\begin{equation}\label{eq:def_etabb_fxbb}
\begin{aligned}
\etabb(s;\x,t)  &= \Bigl(\frac1{\delta\st(\x,t)}\Bigr)^{1/2}\eta\Bigl(\frac{1}{\delta\st(\x,t)}s\Bigr),  \\
\fxsbb(\kap;\x,t)  &= \sx^{3/2}(\x,t)  \;\!  \fxs \bigl(\sx(\x,t) \nkap; \sz(\x,t)z \bigr),  \\
\Pbb(\kap;\x,t) &= \su(\x,t) \;\! \Pk \cdot \bm{L}(\x,t),
\end{aligned}
\end{equation}
so that \eqref{eq:u'_inhom} in Model \ref{model:turb_inhom} can be rewritten as
\begin{equation}\label{eq:u'_inhom_bb_eta_f}
\begin{aligned}
\u'(\x,t) & = 
\Re \!\int_{\R^3\times\R} \etabb(t-s;\x,t) \exp\Bigl\{ i \frac{1}{\delta}\, \kap\cdot \flow(s;\x,t) \Bigr\} 
\;\! \fxsbb(\kap;\x,t) \,\Pbb(\kap;\x,t) \cdot \wn(\d\kap,\d s).
\end{aligned}
\end{equation}
According to Corollary \ref{cor:covmain} the covariance function of $\u'$ is given by
\begin{equation}\label{eq:cov_inhom} 
\begin{aligned}
\E \bigl[ \u'(\x,t) \otimes \u'(\xalt,\talt\;\!) \bigr] 
 = \int_{\R^3\times\R} \etabb(t-s;\x,t) & \;\! \etabb(\talt - s;\xalt,\talt\;\!) 
\cos\Bigl\{ \frac{1}{\delta} \kap \cdot \bigl( \flow(s;\x,t)  - \flow(s;\xalt,\talt\;\!) \bigr)\Bigr\}\;\! \\ 
& \vphantom{\int}
\fxsbb(\kap;\x,t) \,\fxsbb(\kap;\xalt,\talt\;\!)\, \Pbb(\kap;\x,t)\cdot\Pbb(\kap;\xalt,\talt\;\!){\vphantom{)}}^{\!\top} \d(\kap,s).
\end{aligned}
\end{equation}
It can usually not be factorized into the product of a spatial and a temporal covariance function, in contrast to the homogeneous case \eqref{eq:cov_u'_hom}.

The specific structure of Model~\ref{model:turb_inhom} is further discussed in the following remark. 

\begin{figure}
\centerline{\includegraphics[trim={0cm 0cm 0cm .3cm}, clip, scale=1.6]{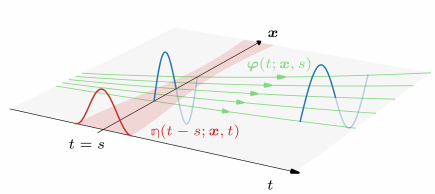}}
\label{fig:mean_flow}
\caption{Illustration of the advection of a plane wave of type $\x\mapsto \exp\{i \frac1\delta \kap\cdot \x\}$ \linebreak 
  (blue curve at $t=s$) by the mean flow (green) for a fixed value of the time integration variable $s$ appearing in \eqref{eq:u'_inhom}, \eqref{eq:u'_inhom_bb_eta_f}: Transport along the pathlines of the mean flow results in the transformed wave 
  $\x\mapsto \exp\{i \frac1\delta \kap\cdot \flow(s;\x,t)\}$, 
  where $\x\mapsto \flow(s;\x,t)$ represents the inverse of the mean flow transformation $\x\mapsto \flow(t;\x,s)$, and may alter the effective spatial frequency. The distortive effect is negligible due to the localizing influence of the time integration kernel $\bbeta(t-s;\x,s)$, 
 cf.~\eqref{eq:def_etabb_fxbb}. }
\end{figure}

\begin{remark}\label{rem:model_turb_inhom}
In order to elucidate the interplay between the single components of Model~\ref{model:turb_inhom}, it is useful to consider a Riemann sum approximation of the stochastic integral in \eqref{eq:u'_inhom} \wrt the time integration variable $s$. To this end, let $s_j = j \Delta s$, $j \in \mathbb{Z}$, be equidistant points with stepsize $\Delta s > 0$, let $\wn_j $ be the $\C^3$-valued Gaussian white noise on $\R^3$ defined by 
$\wn_j(B) = \wn( B \times (s_j - \frac{\Delta s}{2}, s_j + \frac{\Delta s}{2}])$, $B \in \B_0(\R^3)$, and set
\begin{align}\label{eq:u'_j}
\u'_j(\x,t) = 
\Re \!\int_{\R^3} \exp\Bigl\{ i \frac{1}{\delta} \kap \cdot \flow(s_j;\x,t) \Bigr\} \;\!\fxsbb(\kap;\x,t)  
\,\Pbb(\kappa;\x,t) \cdot \wn_j(\d\kap).
\end{align}
Assuming that $\eta^2(s)$ decays sufficiently fast as $|s| \to \infty$, we obtain the approximation
\begin{align}\label{eq:approximation_of_u'}
\u'(\x,t) \approx \sum_{j \in \mathbb{Z}} \etabb(t-s_j;\x,t) \;\!\u'_j(\x,t)
\end{align}
in the sense that the sum on the right-hand side is defined in $L^2(\Pr;\R^3)$ and as $\Delta s \to 0$ its value converges in $L^2(\Pr;\R^3)$ to $\u'(\x,t)$, compare \eqref{eq:u'_inhom_bb_eta_f}.  
We record the following observations:
\begin{enumerate}
\item[\emph{\textbf{i)}}]
In view of \eqref{eq:approximation_of_u'}, the inhomogeneous turbulence field $\u'$ can be interpreted as a weighted sum of independent random fields $\u'_j$, each of which essentially represents a frozen turbulence model that allows for varying flow quantities on the macro scale via the $(\x,t)$-dependence of $\fxsbb$ and $\Pbb$ in \eqref{eq:u'_j}. Transport by the mean flow is described from an Eulerian perspective via the mean flow function $\flow$. 
The fields $\u'_j$ are superposed by means of a convolution with the weight function $\etabb$. The latter includes macroscopically varying flow quantities too, see~\eqref{eq:def_etabb_fxbb}. 
The functions  $\eta$ and $\fx$ in \eqref{eq:def_etabb_fxbb} model the temporal decay and the spatial spectral properties from a micro scale perspective, while the factor $\bm{L}$ allows for directional weightings. 
\vspace{.5mm}
\item[\emph{\textbf{ii)}}] 
In contrast to the homogeneous case, the transformation $ \x\mapsto\flow(s_j;\x,t)$ in \eqref{eq:u'_j} typically has an unwanted distorting effect on the influence of each wave vector $\kap$, and thus on the spatial spectral properties of $\u'_j$ on the micro scale; 
compare Figure~\ref{fig:mean_flow}.
However, for fixed $j$ this distorting effect does not occur at time $t=s_j$, and it is negligible for time points $t$ sufficiently close to $s_j$ on the macro scale. 
In this regard, the weight function $\etabb$ in \eqref{eq:approximation_of_u'} ensures that the superposition of the fluctuations $\u'_j$ is properly localized in time, so that only those fields $\u'_j$ with $s_j$ close to the current time $t$ have a relevant influence on $\u'(x,t)$. This localization is particularly effective due to the presence of the turbulence scale ratio $\delta \ll 1$  in \eqref{eq:def_etabb_fxbb},  which reflects the assumption that the typical decay times of the turbulent structures -- modeled via the function $\eta$ on the micro scale -- are small compared to the macro scale.
\end{enumerate}
\end{remark}

In analogy to the homogeneous case, closing Model~\ref{model:turb_inhom} requires the specific choice of the energy spectrum $E(\kappa;\zeta)$ and the time integration kernel $\eta(s)$.

\begin{example}
\label{ex:energy_spectrum_correlation_function}
Consider the energy spectrum $E$ from Example ~\ref{ex:energy_spectrum} as well as the time integration kernel $\eta$ 
from Example~\ref{ex:temporal_cor}.
We assume that $\sz(\x,t)z<1$ for all $(\x,t)\in\R^3\times\R$, so that the fact that $E(\kappa;\zeta)$ in Example~\ref{ex:energy_spectrum} has been introduced as a function on $\R^+\times (0,\zeta_{\mathrm{crit}})$ instead of $\R^+\times\R^+$ is irrelevant. 
In this situation, the differentiability and integrability conditions in items a), b) and g) of Model~\ref{model:turb_inhom} are satisfied, given that the assumptions on the flow quantities in item d) are fulfilled. 

Indeed, observe that $\eta$ is continuously differentiable with $\int_\R \eta^2(s)\,\d s=1$ and trivially satisfies the integrability assumptions \eqref{eq:assumption_eta}.
For the verification of the continuous differentiability of $E$ we refer to Appendix \ref{sec:calc_estimates}, while the integral conditions in \eqref{eq:basic_integral_prop_fx} are fulfilled by construction of $E$. Regarding the integrability assumption $ \int_{\R^3} \sup_{(\y,s) \in B_r(\x,t)} \| \nabla_{\y} \fxsbb(\kap;\y,s) \|^2 \, \d \kap < \infty$ in \eqref{eq:assumption_deriv_fxsbb}, observe that for every $(\x,t) \in \R^3 \times \R$ the estimate \eqref{eq:estimate_property_fxsbb} in Appendix~\ref{sec:calc_estimates} implies the existence of a radius $r \in \R^+$ such that
\begin{align}\label{eq:estimate_grad_fxsbb_to_prove_in_app}
\sup_{(\y,s) \in B_r(\x,t)} \bigl\| \nabla_{\y} \fxsbb(\kap;\y,s) \bigr\|^2 & \leq\frac{1}{4\pi \nkap^2} 
\begin{cases}
C_1 \nkap^4, & \nkap < \kappa_1^-,
\\ C_2, & \kappa_1^- \leq \nkap \leq \kappa_2^+,
\\  C_3 \nkap^{-7}, & \nkap > \kappa_2^+,
\end{cases}
\end{align}
where $C_1$, $C_2$, $C_3$, $\kappa_1^-$, and $\kappa_2^+$ are non-negative constants depending only on $\x$, $t$, and $r$. By \eqref{eq:estimate_grad_fxsbb_to_prove_in_app} and spherical coordinate transformations we obtain
\begin{align*}
\int_{\R^3} \sup_{(\y,s) \in B_r(\x,t)} \bigl\| \nabla_{\y} \fxsbb(\kap;\y,s) \bigr\|^2 \d \kap \leq C_1
 \int_0^{\kappa_1^-} \kappa^4 \,\d \kappa + C_2\,(\kappa_2^+ - \kappa_1^-) + C_3 \int_{\kappa_2^+}^{\infty} \kappa^{-7} \,\d \kappa  < \infty,
\end{align*}
which proves \eqref{eq:assumption_deriv_fxsbb}.
\end{example}


\subsection{Analysis of the inhomogeneous model}\label{subsec:inhom_analysis}

In this subsection we show in Theorem~\ref{thm:kin_diss_div} below that the characteristic turbulent flow properties \eqref{eq:kinetic_evergy_hom} and \eqref{eq:dissipation_hom} as well as the incompressibility condition \eqref{eq:divergence_hom}, originally discussed in the homogeneous case, remain valid for our inhomogeneous turbulence model in an approximate sense, i.e.,
\begin{align*}
 \frac{1}{2}\,\E \Bigl[ \bigl\| \u'(\x,t) \bigr\|^2 \Bigr] & = k(\x,t) = \su^2(\x,t), \\ 
 \frac12 \,\delta^2 z  \, \E \Bigl[ \bigl\| \nabla_{\x} \u'(\x,t) + \bigl( \nabla_{\x}\u'(\x,t) \bigr){\vphantom{)}}^{\!\top} \bigr\|^2 \Bigr] & \approx \frac{\eps(\x,t)}{\nu(\x,t)} = \frac{\su^2(\x,t)}{\sx^2(\x,t) \sz(\x,t)},\\ 
\delta \, \nabla_{\x} \cdot\u'(\x,t) & \approx 0.
\end{align*}
The precise statements are given in the form of asymptotic results for the turbulence scale ratio $\delta \to 0$. 
As an extension of the first identity above we also verify a related assertion for the 
one-point velocity correlations (Reynolds stresses).
Moreover, in Theorem \ref{thm:ergodicity_inhom} below we present an extension of the ergodicity result from Proposition~\ref{prop:ergodicity_hom} to the case of inhomogeneous turbulence.

In preparation for our analysis we investigate the mean-square regularity of 
the inhomogeneous fluctuation field.

\begin{lemma}[Mean-square regularity]
\label{lem:ms_diff_u'}
Let $\u' = (\u'(\x,t))_{(\x,t) \in \R^3 \times \R}$ be an inhomogeneous turbulence field in the sense of Model \ref{model:turb_inhom}. Then $\u'$ is mean-square differentiable in $\x$, and for every $(\x,t) \in \R^3 \times \R$, $j \in \{1,2,3\}$ the mean-square partial derivative \wrt the $j$-th coordinate of $\x$ satisfies
\begin{align*}\label{eq:partial_u'}
\partial_{x_j} \u'(\x,t) = \Re \!\int_{\R^3 \times \R}  \partial_{x_j} \Bigl( \etabb(t-s;\x,t) \;\! \exp\Bigl\{ i \frac{1}{\delta}\, \kap\cdot\flow(s;\x,t) \Bigr\} \;\! \fxsbb(\kap;\x,t) \;\! \Pbb(\kap;\x,t)  \Bigr) \!\cdot \wn(\d\kap,\d s),
\end{align*}
where 
$\etabb(s;\x,t)$, $\fxsbb(\kap;\x,t)$, and $\Pbb(\kap;\x,t)$
are given by \eqref{eq:def_etabb_fxbb}. Moreover, both $\u'$ and $\partial_{x_j} \u'$ are mean-square continuous in $(\x,t)$.
\end{lemma}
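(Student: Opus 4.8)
The plan is to verify mean-square differentiability by applying the differentiation criterion for white noise integrals, namely Lemma~\ref{lem:cov_ms-diff} (and the surrounding auxiliary results in Appendix~\ref{app:white_noise}), to the integrand in \eqref{eq:u'_inhom_bb_eta_f}. Writing $\Psi_j(\kap,s;\x,t)$ for the formal $x_j$-derivative of the full vector-valued integrand
\[
 \Phi(\kap,s;\x,t) = \etabb(t-s;\x,t) \exp\Bigl\{ i \tfrac{1}{\delta}\, \kap\cdot\flow(s;\x,t) \Bigr\} \fxsbb(\kap;\x,t) \Pbb(\kap;\x,t),
\]
the strategy is: (i) show $\Phi$ and $\Psi_j$ are measurable in $(\kap,s)$ and, for fixed $(\x,t)$, lie in $L^2(2\lambda^3\otimes\lambda^1;\C^3)$ (so both stochastic integrals are well-defined Gaussian vectors, using Corollary~\ref{cor:covmain}); (ii) establish that for each fixed $(\kap,s)$ the map $(\x,t)\mapsto\Phi(\kap,s;\x,t)$ is partially differentiable in $\x$ with derivative $\Psi_j$; (iii) produce a local $L^2$-dominating function for the difference quotients $\tfrac{1}{h}(\Phi(\kap,s;\x+he_j,t)-\Phi(\kap,s;\x,t))$ so that dominated convergence applies in $L^2(2\lambda^3\otimes\lambda^1;\C^3)$; and (iv) conclude via the standard fact (Lemma~\ref{lem:cov_ms-diff}, or an Itô-isometry argument in Appendix~\ref{app:white_noise}) that $L^2$-convergence of the integrands forces mean-square convergence of the stochastic integrals to $\Re\!\int \Psi_j\cdot\wn(\d\kap,\d s)$, which is the claimed formula.

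For step (ii) the differentiability of $\Phi$ in $\x$ follows from the chain and product rules once we know the ingredients are smooth in $\x$: $\etabb(t-s;\x,t)$ is $x$-differentiable because $\st$ is and $\eta\in C^1$; the exponential is differentiable because $\flow(s;\x,t)$ is $x$-differentiable with continuous gradient $\nabla_\x\flow$ (the regularity statement for \eqref{eq:integral_equation_xi} cited after Model~\ref{model:turb_inhom}, e.g.\ \cite[Lemma~4.8]{JLP19}); $\fxsbb(\kap;\x,t)$ is $x$-differentiable by item~a) of Model~\ref{model:turb_inhom} (continuous differentiability of $(\kappa,\zeta)\mapsto\fx$) together with the $x$-differentiability of $\sx,\sz$; and $\Pbb(\kap;\x,t)=\su\,\Pk\cdot\bm L$ is $x$-differentiable since $\su$ and $\bm L$ are. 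Explicitly $\Psi_j$ splits into three terms — one hitting $\etabb$, one hitting the exponential (producing the factor $i\delta^{-1}\kap\cdot\partial_{x_j}\flow(s;\x,t)$), and one hitting $\fxsbb\Pbb$.

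For step (iii), the $L^2$-domination, I would fix $(\x,t)$ and work on a ball $B_r(\x,t)$ with $r$ chosen as in item~g) of Model~\ref{model:turb_inhom}. The term hitting $\fxsbb\Pbb$ is controlled directly by \eqref{eq:assumption_deriv_fxsbb} (the boundedness of $\su,\bm L$ and $\|\Pk\|\le\sqrt{2}$ absorbing the rest, with $\nabla_\x\bm L$ continuous hence locally bounded). The term hitting $\etabb$ needs the first integrability condition in \eqref{eq:assumption_eta} (involving $\etad$) plus continuity/boundedness of $\st$ and $\nabla_\x\st$ on the ball. The exponential term carries the factor $\kap\cdot\partial_{x_j}\flow(s;\x,t)$: since $\nabla_\x\flow(s;\x,t)$ grows at most exponentially in $|s|$ at rate $\|\nabla_\x\overline\u\|_\infty$ (Grönwall applied to the variational equation of \eqref{eq:integral_equation_xi}), this introduces a weight $|s|\,e^{\|\nabla_\x\overline\u\|_\infty\delta\st|s|}$ — and multiplied against $\etabb$-type factors this is exactly what the second integrability condition in \eqref{eq:assumption_eta} is tailored to control, while the extra factor $\|\kap\|$ is absorbed into the energy-spectrum integrability already used for $\Phi\in L^2$ (one needs $\int\|\kap\|^2\fxbb<\infty$, which follows from \eqref{eq:basic_integral_prop_fx}, i.e.\ $\int\nkap^2\fx\,\d\kap=1/(2\zeta)$, after the $\sx$-rescaling). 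Uniformity over $B_r(\x,t)$ comes from the continuity assumptions in item~d). The main obstacle, and the part that will require the most care, is precisely this exponential-growth-in-time estimate for $\nabla_\x\flow$ interacting with the moving-average kernel: one must show the weighted integrals in \eqref{eq:assumption_eta} genuinely dominate the difference quotients uniformly on a neighborhood — a Grönwall bound on $\nabla_\x\flow$ combined with a careful split of the difference quotient of the composition $\kap\mapsto\exp\{i\delta^{-1}\kap\cdot\flow\}$ using $|e^{ia}-e^{ib}|\le|a-b|$.

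Finally, for the mean-square continuity of $\u'$ and of $\partial_{x_j}\u'$ in $(\x,t)$: by Corollary~\ref{cor:covmain} the $L^2(\Pr;\R^3)$-norm of an increment $\u'(\x,t)-\u'(\xalt,\talt)$ is controlled by the $L^2(2\lambda^3\otimes\lambda^1;\C^3)$-norm of $\Phi(\cdot,\cdot;\x,t)-\Phi(\cdot,\cdot;\xalt,\talt)$ (and similarly with $\Psi_j$ in place of $\Phi$ for the derivative), so it suffices to prove $(\x,t)\mapsto\Phi(\cdot,\cdot;\x,t)$ and $(\x,t)\mapsto\Psi_j(\cdot,\cdot;\x,t)$ are continuous as $L^2$-valued maps. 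This again follows from dominated convergence on the parameter space, reusing the same local dominating functions from step~(iii) together with the pointwise (in $(\kap,s)$) continuity in $(\x,t)$ of $\Phi$ and $\Psi_j$, which is immediate from the continuity of $\st,\flow,\nabla_\x\flow,\sx,\sz,\su,\bm L,\nabla_\x\bm L$ in $(\x,t)$ assumed in items~c), d). I would organize the write-up so that steps (i)–(iv) are done once for $\Phi$ (giving existence of $\u'$ and mean-square continuity) and then essentially verbatim for $\Psi_j$, pointing out that the dominating function for $\Psi_j$ is the one already constructed in step~(iii).
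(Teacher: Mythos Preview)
Your proposal is correct and follows essentially the same approach as the paper: verify that the $x_j$-difference quotient of the integrand converges in $L^2(\d\kap,\d s)$ to its formal derivative (so Lemma~\ref{lem:ms-diff} applies), using Gr\"onwall on $\nabla_\x\flow$ for the exponential factor, the two integrability conditions in \eqref{eq:assumption_eta} for the temporal weights, and \eqref{eq:assumption_deriv_fxsbb} for the $\fxsbb$ factor; then deduce mean-square continuity from the isometry \eqref{eq:isometric_property_real_part}. One small correction: the relevant differentiation criterion in Appendix~\ref{app:white_noise} is Lemma~\ref{lem:ms-diff} (interchanging $\partial_{x_j}$ with the white-noise integral), not Lemma~\ref{lem:cov_ms-diff} (which characterizes mean-square differentiability via the covariance function); your description of the criterion is right, only the label is off.
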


\begin{proof}
Throughout this proof we assume \Wlog that 
$\su(\x,t) = 1$ and $\bm{L}(\x,t) = \bm{I}$, which 
is justified since products of partially differentiable deterministic functions 
and partially mean-square differentiable random fields 
are again partially mean-square differentiable 
and satisfy the usual product rule for derivatives. 
Moreover, we 
make use of 
the fact that the assumptions on $\flow$ and $\overline{\u}$ 
in Model~\ref{model:turb_inhom} 
and standard regularity results for ordinary differential equations 
ensure that the mapping $(s,\x,t) \mapsto \flow(s;\x,t)$ is 
continuous, continuously partially differentiable \wrt $\x$, and such that 
\begin{equation} \label{eq:partial_deriv_bxi}
\partial_{x_j}  \flow(s;\x,t) = \bm{e}_j + \int_t^s ( D_\x \overline{\u}) \bigl( \flow(r;\x,t), r \bigr)\cdot \partial_{x_j} \flow(r;\x,t) \,\d r;
\end{equation}
see, e.g., \cite[Lemma~4.8]{JLP19}. 
Here $(\bm{e}_j)_{j=1}^3$ denotes the standard basis in $\R^3$, 
and for every $(\y,r) \in \R^3 \times \R$ the notation $(D_\x \overline{\u})(\y,r)$ 
refers to the Jacobian matrix \wrt the spatial argument of $\overline{\u}$, evaluated at $(\y,r)$.

We first verify the mean-square continuity of $\u'$ in $(\x,t)$. 
To this end, consider the integrand 
\begin{equation}\label{eq:integrand_G}
\bm{G}(\kap,s;\x,t) :=  \etabb(t-s;\x,t) \exp\Bigl\{ i \frac{1}{\delta}\, \kap\cdot\flow(s;\x,t) \Bigr\}  \;\!\fxsbb(\kap;\x,t)  \Pk
\end{equation}
appearing in the white noise integral \eqref{eq:u'_inhom_bb_eta_f} defining $\u'$, 
and note that the isometry property 
\eqref{eq:isometric_property_real_part} 
of the stochastic integral 
implies that it is sufficient to verify the continuity of  $(\x,t) \mapsto \bm{G}(\kap,s;\x,t)$  in $L^2(\d\kap,\d s;\C^{3\times 3})$. 
This continuity, in turn, is guaranteed by the product structure of $\bm{G}$ in combination with \wen 
the boundedness and continuity of $(\kap,s;x,t) \mapsto \exp\{ i \delta^{-1}\kap\cdot \flow(s;\x,t) \}$, the continuity of $(\x,t) \mapsto \etabb(t-s;\x,t)$ in $L^2(\d s;\R)$, and the continuity of $(\x,t) \mapsto \fxsbb(\kap;\x,t)$ in $L^2(\d\kap;\R)$. 
The latter two 
properties follow from 
the continuity of $(\x,t)\mapsto\etabb(t-s;\x,t)$ and $(\x,t)\mapsto\fxsbb(\kap;\x,t)$ for fixed 
$s\in\R$, $\kap\in\R^3$ and 
the fact that $\| \etabb(t-s;\x,t) \|_{L^2(\d s;\R)} = \| \fxsbb(\kap;\x,t) \|_{L^2(\d\kap;\R)} = 1$ for all $(\x,t)\in\R^3\times\R$ 
according to \eqref{eq:basic_integral_prop_fx}, \eqref{eq:basic_integral_prop_eta}; 
compare, e.g., \cite[Theorem~12.10]{Sch05}. 

Next we demonstrate that $\x \mapsto \u'(\x,t)$ is partially mean-square differentiable 
and establish the formula for $\partial_{x_j} \u'(\x,t) $. 
By Lemma \ref{lem:ms-diff} 
in Appendix~\ref{app:white_noise} 
it is sufficient to 
show that for all $(\x,t) \in \R^3 \times \R$,  $j\in\{1,2,3\}$   
the difference quotient $(\bm{G}(\kap,s;\x + h\bm{e}_j,t) - \bm{G}(\kap,s;\x,t))/h$ converges to $\partial_{x_j} \bm{G}(\kap,s;\x,t)$ in $L^2(\d\kap,\d s;\C^{3\times 3})$ as $h \to 0$. 
Combining this, the product structure of $\bm{G}$, 
the boundedness of 
$(\kap,s) \mapsto \exp\{ i \delta^{-1} \kap\cdot\flow(s;\x,t) \}$ and $\kap \mapsto \bm{P}(\kap)$, 
and the continuity of $\x \mapsto \etabb(s-t;\x,t)$ in $L^2(\d s;\R)$ 
further 
reveals that it is sufficient to prove that
\wen 
\begin{align}\nonumber
& \lim_{h\to 0} \frac{1}{h}\Bigl( \exp\Bigl\{ i \frac{1}{\delta}\, \kap\cdot\flow(s;\x + h\bm{e}_j,t) \Bigr\} - \exp\Bigl\{ i \frac{1}{\delta}\, \kap\cdot\flow(s;\x,t) \Bigr\}\Bigr)\etabb(t-s;\x,t) \;\!\fxsbb(\kap;\x,t) 
\\ \label{eq:diffquotient_exp}
 & \qquad \qquad = \partial_{x_j}\Bigl(\exp\Bigl\{ i \frac{1}{\delta}\, \kap\cdot\flow(s;\x,t) \Bigr\} \Bigr) \etabb(t-s;\x,t) \;\! \fxsbb(\kap;\x,t) \qquad \text{in } L^2(\d\kap,\d s;\C),
\\ \label{eq:diffquotient_eta}
 & \lim_{h\to 0} \frac{1}{h}\bigl( \etabb(t-s;\x + h\bm{e}_j,t) - \etabb(t-s;\x,t) \bigr)  = \partial_{x_j} \etabb(t-s;\x,t) \qquad \text{in } L^2(\d s;\R),
\\ \label{eq:diffquotient_fxs}
 & \lim_{h\to 0} \frac{1}{h}\bigl( \fxsbb(\kap;\x + h\bm{e}_j,t) - \fxsbb(\kap;\x,t) \bigr) = \partial_{x_j} \fxsbb(\kap;\x,t) \qquad \text{in } L^2(\d \kap;\R).
\end{align}
To verify the convergence in \eqref{eq:diffquotient_exp},
we are going to apply the dominated convergence theorem. 
For this 
note that \eqref{eq:partial_deriv_bxi} and 
Gronwall's inequality yield
\begin{align}\label{eq:estimate_partial_bxi}
\bigl\| \partial_{x_j} \flow(s;\x,t) \bigr\| \leq \exp\Bigl\{ \| \nabla_\x \overline{\u} \|_\infty |t-s| \Bigr\},
\end{align}
where we recall that $\nabla_\x \overline{\u}$ is the transpose of the Jacobian matrix $D_\x \overline{\u}$. 
Moreover, observe that \eqref{eq:estimate_partial_bxi} and \eqref{eq:assumption_eta} imply 
\begin{align*}
& \Bigl\| \partial_{x_j}\Bigl(\exp\Bigl\{ i \frac{1}{\delta}\, \kap\cdot\flow(s;\x,t) \Bigr\} \Bigr) \Bigr\| \leq \frac{1}{\delta} \nkap \exp\Bigl\{ \| \nabla_\x \overline{\u} \|_\infty |t-s| \Bigr\},
\\ &\Bigl\| \frac{1}{h}\Bigl( \exp\Bigl\{ i \frac{1}{\delta}\, \kap\cdot \flow(s;\x + h\bm{e}_j,t) \Bigr\} - \exp\Bigl\{ i \frac{1}{\delta}\, \kap\cdot \flow(s;\x,t) \Bigr\}\Bigr) \Bigr\| \leq \frac{1}{\delta} \nkap \exp\Bigl\{ \| \nabla_\x \overline{\u} \|_\infty |t-s| \Bigr\},
\end{align*}
and
\begin{align*}
\int_{\R} \exp\Bigl\{ 2 \| \nabla_\x \overline{\u} \|_\infty |t-s| \Bigr\} \etabb^2(t-s;\x,t) \,\d s = \int_{\R} \exp\Bigl\{ 2 \| \nabla_\x \overline{\u} \|_\infty \delta \st(\x,t) |s| \Bigr\} \eta^2(s) \,\d s < \infty.
\end{align*} 
This and the fact that $\int_{\R^3} \nkap^2 \;\!\fxbb(\kap;\x,t) \,\d\kap < \infty$ ensure that the convergence in \eqref{eq:diffquotient_exp} follows from the corresponding pointwise convergence and the dominated convergence theorem.
Regarding \eqref{eq:diffquotient_eta} observe that for $j = 1$ we have 
\begin{align*}
& \frac{1}{h}\bigl( \etabb(t-s;\x + h\bm{e}_1,t) - \etabb(t-s;\x,t) \bigr)  - \partial_{x_1} \etabb(t-s;\x,t) 
\\ & \qquad 
= \frac{1}{h} \int_{x_1}^{x_1+h} \partial_{x_1} \etabb\bigl(t-s;(y,x_2,x_3),t\bigr) - \partial_{x_1} \etabb(t-s;\x,t) \,\d y,
\end{align*}
and similarly for $j \in \{2,3\}$. The term on the right hand side converges in $L^2(\d s;\R)$ as $h \to 0$ given that $\x \mapsto \partial_{x_j} \etabb(t-s;\x,t)$ is continuous in $L^2(\d s;\R)$. This, in turn, 
is implied by the continuity of 
$\x \mapsto \partial_{x_j} \etabb(t-s;\x,t)$ for fixed $s \in \R$  and the continuity of $\x \mapsto \| \partial_{x_j} \etabb(t-s;\x,t) \|_{L^2(\d s;\R)}$.  
The latter properties follow from items b), d) and \eqref{eq:assumption_eta} in Model \ref{model:turb_inhom}, 
noting that 
\begin{align}\label{eq:norm_partial_deriv_etabb}
\bigl\| \partial_{x_j} \etabb(t-s;\x,t) \bigr\|_{L^2(\d s;\R)}^2 & = \Bigl| \frac{\partial_{x_j} \st(\x,t)}{\st(\x,t)} \Bigr|^2 \int_\R \Bigl| \frac{1}{2}\eta(s) + s \etad(s) \Bigr|^2 \d s.
\end{align}  
The 
convergence in \eqref{eq:diffquotient_fxs} is a consequence of the corresponding pointwise convergence in combination with 
the integrability assumption 
\eqref{eq:assumption_deriv_fxsbb} and the dominated convergence theorem.

Finally, we note that the mean-square continuity of $\partial_{x_j} \u'$ in $(\x,t)$ is established using analogous arguments to those employed for the mean-square continuity $\u'$.  
\end{proof}

We are now able to specify and verify the characteristic turbulent flow and incompressibility properties of our inhomogeneous model. 

\begin{theorem}[Characteristic flow properties]\label{thm:kin_diss_div}
Let $\u' = (\u'(\x,t))_{(\x,t) \in \R^3 \times \R}$ be an inhomogeneous turbulence field in the sense of Model~\ref{model:turb_inhom}. 
Then for all $(\x,t) \in \R^3 \times \R$ it holds that
\begin{align}\label{eq:kinetic_energy_inhom}
\frac{1}{2}\,\E \Bigl[ \bigl\| \u'(\x,t) \bigr\|^2 \Bigr] & = k(\x,t) = \su^2(\x,t),
\\ \label{eq:dissipation_inhom}
\lim_{\delta \to 0} \frac12 \,\delta ^2 z  \,\E \Bigl[ \bigl\| \nabla_{\x} \u'(\x,t) + \bigl( \nabla_{\x} \u'(\x,t)\bigr){\vphantom{)}}^{\!\top} \bigr\|^2 \Bigr] & = \frac{\eps(\x,t)}{\nu(\x,t)} = \frac{\su^2(\x,t)}{\sx^2(\x,t) \sz(\x,t)},
\\ \label{eq:divergence_inhom}
\lim_{\delta \to 0}  \E \Bigl[ \bigl| \delta \, \nabla_{\x} \cdot\u'(\x,t) \bigr|^2 \Bigr] & = 0.
\end{align} 
In addition and consistent with \eqref{eq:kinetic_energy_inhom}, 
the one-point velocity correlations satisfy 
\begin{equation}\label{eq:one-point_correlation}
\begin{aligned}
\E\bigl[\u'(\x,t)\otimes\u'(\x,t)\bigr] 
= k(\x,t)\Bigl[\frac{7}{15}\,\bm{L}(\x,t)\cdot\bm{L}(\x,t){\vphantom{)}}^{\!\top} + \frac{1}{5}\,\bm{I}\Bigr]. 
\end{aligned} 
\end{equation}
\end{theorem}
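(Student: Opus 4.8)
Before turning to the author's argument, here is the route I would take.

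The plan is to handle the four assertions in three stages: obtain the one-point correlation \eqref{eq:one-point_correlation} first, deduce the kinetic-energy identity \eqref{eq:kinetic_energy_inhom} from it by taking the trace, and then reduce \eqref{eq:dissipation_inhom} and \eqref{eq:divergence_inhom} to the analysis of the covariance matrix of the mean-square spatial gradient of $\u'$ in the limit $\delta\to0$.

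\emph{One-point correlations and kinetic energy.} I would evaluate the covariance formula \eqref{eq:cov_inhom} at $(\xalt,\talt)=(\x,t)$, where the cosine factor equals $1$, and use Fubini's theorem together with $\int_\R\etabb(t-s;\x,t)^2\,\d s=\int_\R\eta^2(\sigma)\,\d\sigma=1$ (see \eqref{eq:def_etabb_fxbb}, \eqref{eq:basic_integral_prop_eta}) to get $\E[\u'(\x,t)\otimes\u'(\x,t)]=\su^2(\x,t)\int_{\R^3}\fxbb(\kap;\x,t)\,\Pk\cdot\bm{L}(\x,t)\cdot\bm{L}(\x,t)^\top\cdot\Pk\,\d\kap$, with $\fxbb(\kap;\x,t)=\sx^3(\x,t)\fx(\sx(\x,t)\nkap;\sz(\x,t)z)$. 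Substituting $\bm q=\sx(\x,t)\kap$ --- which leaves $\Pk$ unchanged, since the projector depends only on the direction of its argument --- and passing to spherical coordinates $\bm q=\rho\,\btheta$, the radial factor becomes $\int_0^\infty\rho^2\fx(\rho;\sz(\x,t)z)\,\d\rho=(4\pi)^{-1}$ by the first identity in \eqref{eq:basic_integral_prop_fx}, while the angular factor $\int_{S^2}\Pt\cdot\bm{M}\cdot\Pt\,\d\sigma(\btheta)$ ($\d\sigma$ the surface measure, $\bm{M}=\bm{L}(\x,t)\cdot\bm{L}(\x,t)^\top$) is computed from the standard second- and fourth-order moment identities on $S^2$, giving $4\pi\bigl(\tfrac{7}{15}\bm{M}+\tfrac{1}{15}\trace(\bm{M})\bm{I}\bigr)$ for symmetric $\bm{M}$. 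Since $\trace\bm{M}=\|\bm{L}(\x,t)\|^2=3$ and $\su^2(\x,t)=k(\x,t)$, this is exactly \eqref{eq:one-point_correlation}; taking the trace and using $\trace(\bm{L}\cdot\bm{L}^\top)=\trace\bm{I}=3$ yields $\E[\|\u'(\x,t)\|^2]=2k(\x,t)$, i.e.\ \eqref{eq:kinetic_energy_inhom}.

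\emph{The gradient covariance as $\delta\to0$.} By Lemma~\ref{lem:ms_diff_u'}, $\partial_{x_j}\u'(\x,t)$ is a white noise integral against the same $\wn$ with integrand $\partial_{x_j}\bm G$, where $\bm G$ is the integrand in \eqref{eq:u'_inhom_bb_eta_f}; Corollary~\ref{cor:covmain} then gives $\E[\delta\,\partial_{x_j}\u'(\x,t)\otimes\delta\,\partial_{x_l}\u'(\x,t)]=\Re\int_{\R^3\times\R}(\delta\,\partial_{x_j}\bm G)\cdot\overline{(\delta\,\partial_{x_l}\bm G)}^\top\,\d(\kap,s)$, both integrands evaluated at $(\x,t)$. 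Applying the product rule to $\partial_{x_j}\bm G$, the only contribution that survives multiplication by $\delta$ without a remaining prefactor $\delta$ is the one in which $\partial_{x_j}$ hits the plane wave, producing the factor $i\,\kap\cdot\partial_{x_j}\flow(s;\x,t)$ (with $\partial_{x_j}\flow$ as in \eqref{eq:partial_deriv_bxi}). After the substitution $\sigma=(t-s)/(\delta\st(\x,t))$ --- which removes the $\delta$-singularity of $\etabb$, since $\etabb(t-s;\x,t)^2\,\d s=\eta(\sigma)^2\,\d\sigma$ --- the $\delta$-prefactored remainders are seen to have $L^2(\d\kap\,\d\sigma)$-norm of order $\delta$, using \eqref{eq:partial_deriv_etabb}, \eqref{eq:norm_partial_deriv_etabb}, items b), d) of Model~\ref{model:turb_inhom}, the first part of \eqref{eq:assumption_eta}, \eqref{eq:assumption_deriv_fxsbb}, the bound $\|\Pk\|^2=2$ and $\int_{\R^3}\fxbb(\kap;\x,t)\,\d\kap=1$, so that by Cauchy--Schwarz they contribute nothing in the limit. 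For the leading term I would invoke dominated convergence: since $\flow(t;\x,t)=\x$ and hence $\partial_{x_j}\flow(t;\x,t)=\bm{e}_j$, the integrand $\eta(\sigma)^2\bigl(\kap\cdot\partial_{x_j}\flow(t-\delta\st(\x,t)\sigma;\x,t)\bigr)\bigl(\kap\cdot\partial_{x_l}\flow(t-\delta\st(\x,t)\sigma;\x,t)\bigr)$ converges pointwise to $\kappa_j\kappa_l\,\eta(\sigma)^2$, with dominating function built from the Gronwall bound \eqref{eq:estimate_partial_bxi}, the second part of \eqref{eq:assumption_eta} (valid for one reference value of $\delta$, hence with a smaller exponent for all smaller $\delta$) and $\int_{\R^3}\nkap^2\fxbb(\kap;\x,t)\,\d\kap<\infty$; together with $\int_\R\eta^2=1$ this yields
\begin{align*}
\lim_{\delta\to0}\delta^2\,\E\bigl[\partial_{x_j}\u'(\x,t)\otimes\partial_{x_l}\u'(\x,t)\bigr]
=\int_{\R^3}\kappa_j\kappa_l\,\fxbb(\kap;\x,t)\,\Pk\cdot\bm{L}(\x,t)\cdot\bm{L}(\x,t)^\top\cdot\Pk\,\d\kap=:\bm A_{jl}.
\end{align*}
The same change of variables and spherical coordinates as in the first stage, now with the second identity in \eqref{eq:basic_integral_prop_fx} in the form $\int_0^\infty\rho^4\fx(\rho;\sz(\x,t)z)\,\d\rho=(8\pi\sz(\x,t)z)^{-1}$, give $\bm A_{jl}=\tfrac{\su^2(\x,t)}{8\pi\sx^2(\x,t)\sz(\x,t)z}\int_{S^2}\theta_j\theta_l\,\Pt\cdot\bm{M}\cdot\Pt\,\d\sigma(\btheta)$.

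\emph{Assembly, and the expected main obstacle.} With $g_{ij}:=\partial_{x_i}u'_j$ one has $\|\nabla_\x\u'+(\nabla_\x\u')^\top\|^2=2\sum_{i,j}g_{ij}^2+2\sum_{i,j}g_{ij}g_{ji}$ and $|\nabla_\x\cdot\u'|^2=\sum_{i,j}g_{ii}g_{jj}$, so the three remaining assertions follow from the limit above by summing $\bm A_{jl}$ over the relevant indices. Using $\sum_j\theta_j^2=1$, $\Pt\cdot\btheta=\bm0$, $\trace(\Pt\cdot\bm{M}\cdot\Pt)=\trace(\Pt\cdot\bm{M})=\trace\bm{M}-\btheta\cdot\bm{M}\cdot\btheta$, $\int_{S^2}\btheta\cdot\bm{M}\cdot\btheta\,\d\sigma(\btheta)=\tfrac{4\pi}{3}\trace\bm{M}$ and $\trace\bm{M}=3$, one gets $\lim_{\delta\to0}\delta^2\E[\|\nabla_\x\u'(\x,t)\|^2]=\sum_j\trace\bm A_{jj}=\su^2(\x,t)/(\sx^2(\x,t)\sz(\x,t)z)$, while $\sum_{i,j}(\bm A_{ij})_{ji}$ and $\sum_{i,j}(\bm A_{ij})_{ij}$ both vanish because $\Pt\cdot\bm{M}\cdot\Pt\cdot\btheta=\bm0$. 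Hence $\lim_{\delta\to0}\tfrac12\delta^2z\,\E[\|\nabla_\x\u'+(\nabla_\x\u')^\top\|^2]=\su^2/(\sx^2\sz)=\eps/\nu$ by \eqref{eq:sigma_x_t_u_z}, which is \eqref{eq:dissipation_inhom}, and $\lim_{\delta\to0}\E[|\delta\,\nabla_\x\cdot\u'(\x,t)|^2]=0$, which is \eqref{eq:divergence_inhom}. I expect the real difficulty to be the passage to the limit $\delta\to0$ in the gradient covariance --- rigorously setting up the dominated-convergence argument for the matrix-valued leading term and controlling the $L^2$-decay of the several $\delta$-prefactored remainder terms --- which is precisely where the Gronwall estimate \eqref{eq:estimate_partial_bxi} and the technical integrability hypotheses in item g) of Model~\ref{model:turb_inhom} are needed; the $S^2$-moment identities and the $\kap$-scaling are then routine.
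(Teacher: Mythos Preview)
Your proposal is correct and follows essentially the same approach as the paper: the one-point correlation via the covariance formula, factoring out the time integral, passing to spherical coordinates, and the $S^2$-moment identity $\int_{S^2}\Pt\cdot\bm S\cdot\Pt\,U_{S^2}(\d\btheta)=\tfrac{7}{15}\bm S+\tfrac{1}{15}\trace(\bm S)\bm I$ is exactly what the paper does; the gradient analysis via the product rule, isolating the term where $\partial_{x_j}$ hits the plane wave as the only non-$\delta$-prefactored contribution, controlling the rest through \eqref{eq:assumption_eta}, \eqref{eq:assumption_deriv_fxsbb}, and the Gronwall bound \eqref{eq:estimate_partial_bxi}, and finally exploiting $\Pk\cdot\kap=\bm0$ for the cross term and the divergence, all match the paper's argument.

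The only organizational difference is that the paper computes $\delta^2\E[\|\nabla_\x\u'\|^2]$ and $\delta^2\E[\nabla_\x\u':(\nabla_\x\u')^\top]$ separately, handling the time-integral in the leading term by expanding $\nabla_\x\flow(s;\x,t)\cdot\kap=\kap+\int_t^s(\cdots)\,\d r$ and bounding the integral correction directly (so that $I_{1,1}=\eps/(\nu z)$ exactly, before taking any limit), whereas you compute the full block matrix $\bm A_{jl}$ at once via the substitution $\sigma=(t-s)/(\delta\st)$ and dominated convergence with the pointwise limit $\partial_{x_j}\flow(t;\x,t)=\bm e_j$, and assemble the three scalar quantities at the end. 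Both routes use the same ingredients and the same technical hypotheses; yours is slightly more unified, the paper's slightly more explicit about the exact leading constant.
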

\medskip

We remark that the supplementary 
assertion \eqref{eq:one-point_correlation} 
allows to 
encode additionally given information of the 
Reynolds stress tensor $\bm{R}(\x,t)$ 
fulfilling the compatibility condition 
$\trace\bm{R}(\x,t)=2k(\x,t)$ \wrt the turbulent kinetic energy $k(\x,t)$  
into the model. 
Indeed, 
adapting a strategy from \cite{ADDK21,ADDK22}, 
one may choose the anisotropy factor $\bm{L}(\x,t)$ 
such that 
$\bm{L}(\x,t)\cdot\bm{L}(\x,t){\vphantom{)}}^{\!\top}= (15/7)\, k(\x,t)^{-1}\bm{R}(\x,t)- (3/7)\,\bm{I}$ 
in order to ensure that 
the right hand side of \eqref{eq:one-point_correlation} amounts to 
a prescribed value for
$\bm{R}(\x,t)$. 
In the isotropic case, $\bm{R}(\x,t)=(2/3)\, k(\x,t)\, \bm{I}$ and $\bm{L}(\x,t)=\bm{I}$ apply in particular. 

\begin{proof}[Proof of Theorem~\ref{thm:kin_diss_div}]
The four assertions are verified one by one. 
As before, we employ the shorthand notation 
$\etabb$, $\fxsbb$, and $\Pbb$
introduced in \eqref{eq:def_etabb_fxbb} to simplify the presentation. 
With regard to  
the first assertion \eqref{eq:kinetic_energy_inhom} 
and the supplementary statement \eqref{eq:one-point_correlation},  
note that the covariance formula \eqref{eq:cov_inhom} in combination with 
the integral assumptions \eqref{eq:basic_integral_prop_fx}, \eqref{eq:basic_integral_prop_eta} and a spherical coordinate transformation 
ensures that
\begin{equation}\label{eq:one-point_correlation:proof}
\E\bigl[\u'(\x,t) \otimes \u'(\x,t)\bigr]  = \int_{S^2} \Pbb(\btheta;\x,t) \cdot \Pbb(\btheta;\x,t)^\top U_{S^2}(\d \btheta),
\end{equation} 
where $U_{S^2}$ denotes the uniform distribution on the unit sphere $S^2$. 
Next observe that the projector $\Pt$ 
enjoys specific properties when integrated over the unit sphere. 
In particular, for any symmetric matrix $\bm{S} \in \R^{3\times 3}$ careful calculations result in 
the identity 
$\int_{S^2} \Pt \cdot \bm{S} \cdot \Pt  \, U_{S^2}(\d \btheta) = (7/15)\:\bm{S} + (1/15)\:\trace\!(\bm{S})\:\bm{I}$. 
Choosing 
$\bm{S}=k(\x,t)\: \bm{L}(\x,t)\cdot\bm{L}(\x,t){\vphantom{)}}^{\!\top}$ 
and taking into account 
the fact 
that $\trace(\bm{L}(\x,t)\cdot\bm{L}(\x,t){\vphantom{)}}^{\!\top})=\|\bm{L}(\x,t)\|^2=3$ 
establishes 
\eqref{eq:one-point_correlation} as a consequence of \eqref{eq:one-point_correlation:proof}. 
Taking 
the trace of both sides in \eqref{eq:one-point_correlation} readily implies \eqref{eq:kinetic_energy_inhom}.

In order to prove the second assertion \eqref{eq:dissipation_inhom}, we are going to show that
\begin{align}\label{eq:norm_gradient_u'}
\lim_{\delta \to 0} \delta^2 z \,\E \Bigl[ \bigl\| \nabla_\x \u'(\x,t) \bigr\|^2 \Bigr] = \lim_{\delta \to 0} \delta^2 z\, \E \Bigl[ \bigl\| \bigl( \nabla_\x \u'(\x,t) \bigr){\vphantom{)}}^{\!\top} \bigr\|^2 \Bigr] = \frac{\eps(\x,t)}{\nu(\x,t)}
\end{align}
and
\begin{align}\label{eq:scalar_product_gradient_u'}
\lim_{\delta \to 0} \delta^2 z \,\E \bigl[ \nabla_\x \u'(\x,t) : \bigl( \nabla_\x \u'(\x,t) \bigr){\vphantom{)}}^{\!\top}\bigr] = 0.
\end{align} 
Regarding \eqref{eq:norm_gradient_u'}, first note that Lemma \ref{lem:ms_diff_u'} and the isometric property \eqref{eq:isometric_property_real_part} of the stochastic integral ensure that $\u'$ is mean-square differentiable in $\x$ and the mean-square partial derivatives satisfy
\begin{align*}
\E \Bigl[ \bigl\| \partial_{x_j} \u'(\x,t) \bigr\|^2 \Bigr] & = \int_{\R^3 \times \R}  \biggl\|\partial_{x_j} \Bigl( \etabb(t-s;\x,t) \exp\Bigl\{ i \frac{1}{\delta}\, \kap\cdot \flow(s;\x,t) \Bigr\} \fxsbb(\kap;\x,t) \, \Pbb(\kap;\x,t) \Bigr)\biggr\|^2 \d(\kap, s),
\end{align*}
which leads to
\begin{equation}\label{eq:exp_value_nabla_u'}
\begin{aligned}
 & \delta^2 \E \Bigl[ \bigl\| \nabla_\x \u'(\x,t) \bigr\|^2 \Bigr] =  \sum_{j=1}^3 \delta^2 \E \Bigl[ \bigl\| \partial_{x_j} \u'(\x,t) \bigr\|^2 \Bigr]
\\ & \qquad = \int_{\R^3 \times \R} \bigl| \etabb(t-s;\x,t) \;\!\fxsbb(\kap;\x,t) \bigr|^2 \bigl\| \Pbb(\kap;\x,t)\bigr\|^2 \bigl\| \nabla_\x  \flow(s;\x,t) \cdot \kap \bigr\|^2  \d(\kap,s)
\\ &  \qquad \qquad +  \delta^2 \int_{\R^3 \times \R} \Bigl\| \nabla_\x \bigl(  \etabb(t-s;\x,t) \;\!\fxsbb(\kap;\x,t)\, \Pbb(\kap;\x,t) \bigr) \Bigr\|^2 \d(\kap,s)
\\ & \qquad =: I_1 + \delta^2 I_2.
\end{aligned}
\end{equation}
In view of 
the term $I_1$ in \eqref{eq:exp_value_nabla_u'}, observe that the identity \eqref{eq:partial_deriv_bxi} in the proof of Lemma~\ref{lem:ms_diff_u'} above implies that
\begin{align}\label{eq:gradient_bxi}
\nabla_\x \flow(s;\x,t) \cdot \kap = \kap + \int_t^s \nabla_\x \flow(r;\x,t) \cdot (\nabla_\x \overline{\u}) \bigl( \flow(r;\x,t),r\bigr) \cdot \kap \, \d r
\end{align}
and consequently
\begin{equation}\label{eq:I_1}
\begin{aligned}
 I_1 &= \int_{\R^3 \times \R}   \bigl| \etabb(t-s;\x,t) \;\!\fxsbb(\kap;\x,t) \bigr|^2  \bigl\| \Pbb(\kap;\x,t)\bigr\|^2 
 \\& \qquad\qquad
 \biggl\{ \nkap^2  
 + 2  \biggl[ \int_t^s \! \kap \cdot  \nabla_\x \flow(r;\x,t) \cdot (\nabla_\x \overline{\u}) \bigl( \flow(r;\x,t),r\bigr) \cdot \kap \, \d r \biggr] \\
 & \qquad \qquad\; + \biggl\| \int_t^s \nabla_\x \flow(r;\x,t) \cdot (\nabla_\x \overline{\u}) \bigl( \flow(r;\x,t),r\bigr) \cdot \kap \, \d r \biggr\|^2 \, \biggr\} \;\d(\kap,s) 
\\ & =: I_{1,1} + I_{1,2} + I_{1,3}.
\end{aligned}
\end{equation}
Next note that 
an application of a spherical coordinate transformation 
in combination with the second identity in \eqref{eq:basic_integral_prop_fx} and
the fact that $\int_{S^2} \| \Pt \cdot \bm{L}(\x,t) \|^2 \,U_{S^2}(\d\btheta) = 2$ 
yields
\begin{align}\label{eq:identity_L_and_fxbb}
\int_{\R^3} \nkap^2 \;\! \fxbb(\kap;\x,t) \;\! \bigl\| \Pbb(\kap;\x,t)\bigr\|^2 \,\d\kap = \frac{\su^2(\x,t)}{\sx^2(\x,t) \sz(\x,t) z} =  \frac{\eps(\x,t)}{\nu(\x,t) z}.
\end{align}
This and \eqref{eq:basic_integral_prop_eta} ensure that
the first summand on the right-hand side of \eqref{eq:I_1} equals
\begin{equation}\label{eq:I_1_1}
I_{1,1} = \int_{\R} \etabb^2(t-s;\x,t) \,\d s \, \int_{\R^3} \nkap^2 \;\!\fxbb(\kap;\x,t)\;\! \bigl\| \Pbb(\kap;\x,t)\bigr\|^2 \,\d\kap  = \frac{\eps(\x,t)}{\nu(\x,t) z}.
\end{equation}
Regarding the term $I_{1,2}$ in \eqref{eq:I_1}, we employ the Gronwall estimate \eqref{eq:estimate_partial_bxi} in the proof of Lemma~\ref{lem:ms_diff_u'} above to obtain that
\begin{align*}
\biggl| \int_t^s \kap \! \cdot  \nabla_\x \flow(r;\x,t) \cdot (\nabla_\x \overline{\u}) \bigl( \flow(r;\x,t),r\bigr) \cdot \kap \, \d r \biggr| 
& \leq 
 |t-s| \;\!\nkap^2 \;\! \sqrt{3}\;\!   \exp\Bigl\{ \| \nabla_\x \overline{\u} \|_\infty |t-s| \Bigr\} \| \nabla_\x \overline{\u} \|_\infty.
\end{align*}
Using this, 
the definition of $\etabb$ in 
\eqref{eq:def_etabb_fxbb}, and \eqref{eq:identity_L_and_fxbb} we arrive at 
\begin{align}
I_{1,2} & \leq  4\;\!  \| \nabla_\x \overline{\u} \|_\infty \! \int_{\R} |t-s| \exp\Bigl\{ \| \nabla_\x \overline{\u} \|_\infty \! |t-s| \Bigr\} \;\!\etabb^2(t-s;\x,t) \, \d s    
\int_{\R^3} \! \nkap^2 \;\!\fxbb(\kap;\x,t) \bigl\| \Pbb(\kap;\x,t)\bigr\|^2 \d\kap
\notag
\\ & =   4 \;\!   \delta \;\!  \st(\x,t) \frac{\eps(\x,t)}{\nu(\x,t)z} \;\! \| \nabla_\x \overline{\u} \|_\infty  \int_{\R} |s| \exp\Bigl\{ \| \nabla_\x \overline{\u} \|_\infty \delta \st(\x,t) |s| \Bigr\}\;\! \eta^2(s) \, \d s.
\label{eq:I_1_2}   
\end{align}
The term $I_{1,3}$ in \eqref{eq:I_1} can be handled analogously to $I_{1,2}$, resulting in the estimate  
\begin{equation*}
I_{1,3}  \leq  3\;\!  \delta^2 \st^2(\x,t) \frac{\eps(\x,t)}{\nu(\x,t)z} \;\!   \| \nabla_\x \overline{\u} \|_\infty^2  \int_{\R} s^2 \exp\Bigl\{ 2 \| \nabla_\x \overline{\u} \|_\infty \delta \st(\x,t) |s| \Bigr\} \;\! \eta^2(s) \, \d s. 
\end{equation*} 
Combining this with \eqref{eq:I_1}, \eqref{eq:I_1_1}, \eqref{eq:I_1_2}, and 
the integrability assumption 
\eqref{eq:assumption_eta} establishes that
\begin{align}\label{eq:I_1_limit}
\lim_{\delta \to 0} I_1 = \frac{\eps(\x,t)}{\nu(\x,t)z} .
\end{align}
In view of  
the integral $I_2$ in \eqref{eq:exp_value_nabla_u'}, note that 
the integral identities 
\eqref{eq:basic_integral_prop_fx}, \eqref{eq:basic_integral_prop_eta} and the fact
that $\int_{S^2} \| \Pt \cdot \bm{L}(\x,t) \|^2 \,U_{S^2}(\d\btheta) = 2$ and $\|\Pk\|^2 = 2$ for $\kap \neq \bm{0}$ 
imply 
\begin{equation}\label{eq:I_2_estimate_1} 
I_2 \leq  6 \;\! k(\x,t) \biggl[ \int_{\R} \bigl\| \nabla_\x \etabb(t-s;\x,t) \bigr\|^2 \d s +  \int_{\R^3} \bigl\| \nabla_\x \fxsbb(\kap;\x,t) \bigr\|^2 \d\kap \biggr] + 6 \;\! \Bigl\| \nabla_\x \bigl(\su(\x,t)\bm{L}(\x,t)\bigr) \Bigr\|^2.
\end{equation} 
The identity \eqref{eq:norm_partial_deriv_etabb} in the proof  of Lemma \ref{lem:ms_diff_u'} above
and the integrability assumptions \eqref{eq:assumption_eta}, \eqref{eq:assumption_deriv_fxsbb} 
therefore ensure that 
$\sup_{\delta > 0} I_2 < \infty$. 
Combining this with \eqref{eq:exp_value_nabla_u'} and \eqref{eq:I_1_limit}  
establishes
the assertion \eqref{eq:norm_gradient_u'}. 
Regarding \eqref{eq:scalar_product_gradient_u'}, observe that Lemma \ref{lem:ms_diff_u'},
a summand-wise application of Lemma \ref{lem:covmain}, and the Cauchy-Schwarz inequality
yield
\begin{align}\label{eq:equality_of_scalar_product_gradient_u'}
& \delta^2 \Bigl| \E \bigl[ \nabla_\x \u'(\x,t) : \bigl( \nabla_\x \u'(\x,t) \bigr){\vphantom{)}}^{\!\top}\bigr] \Bigr| 
= \delta^2 \Bigl| \sum_{j,l=1}^3 \E \bigl[ \partial_{x_j}  u'_l(\x,t) \;  \partial_{x_l} u'_j(\x,t) \bigr] \Bigr| 
\\ & \quad \leq \delta^2  \int_{\R^3 \times \R} \Bigl\|  \Pk \cdot \nabla_\x \Bigl(  \su(\x,t) \etabb(t-s;\x,t) \exp\Bigl\{ i \frac{1}{\delta} \kap \cdot \flow(s;\x,t)\Bigr\} \fxsbb(\kap;\x,t)  \bm{L}(\x,t) \Bigr) \Bigr\|^2 \d(\kap,s). \notag
\end{align}
Taking into account that $\Pk \cdot \kap = \bm{0}$, 
the upper bound in \eqref{eq:equality_of_scalar_product_gradient_u'} can be estimated by 
techniques similar to those employed before, 
leading to a term of order $\delta^2$. This implies \eqref{eq:scalar_product_gradient_u'}.

It remains to verify the third assertion \eqref{eq:divergence_inhom}. 
To this end, note that Lemma \ref{lem:ms_diff_u'} 
and the isometric property \eqref{eq:isometric_property_real_part} of the stochastic integral ensure 
that the divergence of $\u'$ in the mean-square sense satisfies 
\begin{equation*}
\E \Bigl[ \bigl| \delta \,\nabla_\x \cdot \u'(\x,t) \bigr|^2 \Bigr]
= \delta^2 \int_{\R^3 \times \R} \Bigl\|  \nabla_\x \cdot \Bigl( \etabb(t-s;\x,t) \exp\Bigl\{ i \frac{1}{\delta} \kap \cdot \flow(s;\x,t)\Bigr\} \fxsbb(\kap;\x,t) \;\! \Pbb(\kap;\x,t) \Bigr) \Bigr\|^2 \d(\kap,s).
\end{equation*} 
As 
the term on the right-hand side 
is less than or equal to 
three times the upper bound in 
\eqref{eq:equality_of_scalar_product_gradient_u'}, 
it follows that \eqref{eq:divergence_inhom} is established in the same way as \eqref{eq:scalar_product_gradient_u'}. 
\end{proof}

In extension of the 
homogeneous ergodicity result
in Proposition~\ref{prop:ergodicity_hom}, the following theorem shows that local characteristic values of the inhomogeneous fluctuation field can be recovered from suitable local averages in time and space. Loosely speaking, the asymptotic statement requires that both the turbulence scale ratio $\delta$ and the size of the domain of integration tend to zero, but the latter with a slower rate than the former. We note that the 
definition and the formal interpretation of the average integrals 
and the convergence assertions in Theorem~\ref{thm:ergodicity_inhom} 
are completely analogous to Proposition~\ref{prop:ergodicity_hom}.

\begin{theorem}[Ergodicity]\label{thm:ergodicity_inhom}
Let $\u' = (\u'(\x,t))_{(\x,t) \in \R^3 \times \R}$ be an inhomogeneous turbulence field in the sense of Model \ref{model:turb_inhom} and let $L,L' \in [0,\infty]$ be such that $L = \infty \,\vee \, L' = \infty$.
Then, considering radii of integration $R,R' \in \R_0^+$ and the turbulence scale ratio $\delta \in \R^+$, for every $(\x,t)\in\R^3\times\R$ we have the following convergences in the mean-square sense as $(R/\delta,R'/\delta,R,R')\to(L,L',0,0)$ in $\overline{\R}^4$: 
\begin{align}
& \mint{-}_{B_R^{(1)}(t)}\mint{-}_{B_{R'}^{(3)}(\flow(s;\x,t))} \u'(\y,s) \,\d\y\,\d s 
\;\; \xrightarrow{\;L^2\;}\;\; \mathbf 0 
\label{eq:ergodicity_thm_inhom_A}\\
\frac12\,&  \mint{-}_{B_R^{(1)}(t)}\mint{-}_{B_{R'}^{(3)}(\flow(s;\x,t))} \bigl\|\u'(\y,s) \bigr\|^2 \,\d\y\,\d s 
\;\; \xrightarrow{\;L^2\;}\;\; k(\x,t)  
\label{eq:ergodicity_thm_inhom_B}\\
\frac12 \, \delta^2z\,& \mint{-}_{B_R^{(1)}(t)}\mint{-}_{B_{R'}^{(3)}(\flow(s;\x,t))} \bigl\| \nabla_\y\u'(\y,s) + \bigl(\nabla_\y\u'(\y,s)\bigr){\vphantom{)}}^{\!\top}\bigr\|^2 \,\d\y\,\d s 
\;\; \xrightarrow{\;L^2\;}\;\; \frac{\eps(\x,t)}{\nu(\x,t)} 
\label{eq:ergodicity_thm_inhom_C}
\end{align}
In addition and consistent with \eqref{eq:ergodicity_thm_inhom_B}, 
the average integral obtained by replacing 
the integrand 
$\|\u'(\y,s)\|^2/2$ 
on the left-hand side of \eqref{eq:ergodicity_thm_inhom_B} 
with $\u'(\y,s) \otimes \u'(\y,s)$ 
converges to the 
one-point velocity correlation tensor 
$k(\x,t)\,[(7/15)\,\bm{L}(\x,t)\cdot\bm{L}(\x,t){\vphantom{)}}^{\!\top}\! + (1/5)\;\!\bm{I}]$.   
\end{theorem}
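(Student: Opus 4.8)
The plan is to combine a localization argument with the homogeneous ergodicity result. Fix $(\x,t)\in\R^3\times\R$. The crucial first step is the substitution $\y=\flow(s;\x,t)+\tilde\y$, which turns the moving spatial balls $B_{R'}^{(3)}(\flow(s;\x,t))$ into a fixed ball and hence the domains in \eqref{eq:ergodicity_thm_inhom_A}--\eqref{eq:ergodicity_thm_inhom_C} into the Cartesian product $B_R^{(1)}(t)\times B_{R'}^{(3)}(\bm{0})$ in the variables $(s,\tilde\y)$, with integrands $\phi\bigl(\u'(\flow(s;\x,t)+\tilde\y,s)\bigr)$ where $\phi(\u)=u_l$ ($l\in\{1,2,3\}$) for \eqref{eq:ergodicity_thm_inhom_A}, $\phi(\u)=\tfrac12\|\u\|^2$ for \eqref{eq:ergodicity_thm_inhom_B}, $\phi(\u)=\u\otimes\u$ for the supplementary Reynolds-stress statement, and, for \eqref{eq:ergodicity_thm_inhom_C}, $\phi$ applied to the $\delta$-scaled mean-square gradient $\delta\nabla_\y\u'$ from Lemma~\ref{lem:ms_diff_u'} (via the identification $\R^{3\times3}\cong\R^9$, as in the proof of Proposition~\ref{prop:ergodicity_hom}). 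I then introduce the \emph{frozen} homogeneous turbulence field $\w^{(\x,t)}$ obtained from \eqref{eq:u'_inhom_bb_eta_f} by freezing all coefficient functions $\sx,\st,\su,\sz,\bm{L}$ at $(\x,t)$, replacing $\flow(\sigma;\cdot,\cdot)$ by its first-order expansion about the pathline through $(\x,t)$ (so that its spatial dependence reduces, up to an $(s,\tilde\y)$-independent phase, to the plane wave $\exp\{i\delta^{-1}\kap\cdot\tilde\y\}$), and rescaling to the micro scale; as in the proof of Proposition~\ref{prop:ergodicity_hom}, $\w^{(\x,t)}$ is then a homogeneous, mean-square continuous, centered Gaussian random field, mean-square differentiable in its spatial argument, and it is arranged so that $\u'(\flow(s;\x,t)+\tilde\y,s)$ equals $\w^{(\x,t)}(\tilde\y/\delta,(s-t)/\delta)$ up to an error treated below.

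With this decomposition each left-hand side is an average of $\phi(\w^{(\x,t)}(\tilde\y/\delta,(s-t)/\delta))$ over $B_R^{(1)}(t)\times B_{R'}^{(3)}(\bm{0})$ plus a remainder. For the first term I would apply Proposition~\ref{prop:mean_ergo_thm} exactly as in the proof of Proposition~\ref{prop:ergodicity_hom}, with integration radii $R/\delta,R'/\delta$ and trivial space--time centers; since $L\vee L'=\infty$ (which, incidentally, forces $\delta\to0$ along any sequence realizing $(R/\delta,R'/\delta,R,R')\to(L,L',0,0)$), this yields mean-square convergence to $\E[\phi(\w^{(\x,t)}(\bm{0},0))]$. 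The covariance computations in the proof of Theorem~\ref{thm:kin_diss_div}, which at coincident points are insensitive to the advection and to the particular time-weight, identify this limit as $\bm{0}$, $k(\x,t)$, and $k(\x,t)\bigl[\tfrac7{15}\bm{L}(\x,t)\cdot\bm{L}(\x,t)^\top+\tfrac15\bm{I}\bigr]$ in the first three cases; for \eqref{eq:ergodicity_thm_inhom_C} the same computations (cf.\ the treatment of $I_1$ around \eqref{eq:norm_gradient_u'}) give $\tfrac12 z\,\E[\|\nabla\w^{(\x,t)}(\bm{0},0)+(\nabla\w^{(\x,t)}(\bm{0},0))^\top\|^2]=\eps(\x,t)/\nu(\x,t)$, the cross term vanishing because $\w^{(\x,t)}$ is divergence free in its spatial argument. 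It then remains to show that the remainder tends to $\bm{0}$ in $L^2(\Pr)$ as $(R,R',\delta)\to(0,0,0)$.

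Since $\u'(\flow(s;\x,t)+\tilde\y,s)-\w^{(\x,t)}(\tilde\y/\delta,(s-t)/\delta)$ is centered Gaussian, it suffices to bound its second moment uniformly over $(s,\tilde\y)\in B_R^{(1)}(t)\times B_{R'}^{(3)}(\bm{0})$, and by the isometry property \eqref{eq:isometric_property_real_part} of the white-noise integral this reduces to an $L^2(\d\kap,\d\sigma)$-estimate of the difference of the two integrands. That difference splits into (i) the contribution of freezing the coefficient functions $\sx,\st,\su,\sz,\bm{L}$ (and, for \eqref{eq:ergodicity_thm_inhom_C}, also $\nabla_\x\flow(s;\x,t)=\bm{I}+O(|s-t|)$ and the subdominant derivative terms), controlled uniformly by the moduli of continuity of these objects over a ball of radius $O(R+R')$ about $(\x,t)$ and hence $\to0$ as $R,R'\to0$ independently of $\delta$; and (ii) the contribution of replacing $\flow(\sigma;\flow(s;\x,t)+\tilde\y,s)$ by $\flow(\sigma;\x,t)+\tilde\y$ in the exponential, which — using the semigroup property of $\flow$, the identities \eqref{eq:partial_deriv_bxi}, \eqref{eq:partial_deriv_etabb} and the Gronwall bound \eqref{eq:estimate_partial_bxi}, after passing to the $\eta$-scaled time variable — is bounded by $\|\tilde\y\|=O(R')$ times a quantity uniform for bounded $\delta$ owing to \eqref{eq:assumption_eta}, \eqref{eq:assumption_deriv_fxsbb} and $\int_{\R^3}\nkap^2\fx(\nkap;\sz(\x,t)z)\,\d\kap<\infty$ from \eqref{eq:basic_integral_prop_fx}; a standard tightness step (truncating the $\eta$-scaled variable at a level $M$, sending $R,R',\delta\to0$, then $M\to\infty$) finishes (ii), and the $L^2$-triangle inequality then yields all the asserted convergences. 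I expect (ii) to be the main obstacle: because $L\vee L'=\infty$, the averaging domain measured on the micro scale is a product of balls of radii $R/\delta,R'/\delta$ that may be unbounded, so the per-point comparison bound must not deteriorate as $R/\delta$ or $R'/\delta\to\infty$; it does not, because on the macro scale the whole domain still shrinks to $(\x,t)$, so the freezing errors depend only on $R+R'$, and the advection-linearization error carries the factor $\|\tilde\y\|\le R'$ together with an $\eta$-weighted, $\delta$-uniformly integrable tail. Working in the co-moving frame $\tilde\y=\y-\flow(s;\x,t)$ — rather than linearizing the pathline over the whole time interval — is precisely what keeps this error free of a problematic $\tau$-dependent translation in the argument of $\w^{(\x,t)}$, and is the structural point that makes the comparison go through.
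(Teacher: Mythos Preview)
Your approach is correct and closely parallels the paper's. Both proofs change to the co-moving frame $\tilde\y=\y-\flow(s;\x,t)$, replace the inhomogeneous field by a homogeneous approximant coupled to the same white noise, invoke a mean ergodic theorem for the main term, and control the remainder via the white-noise isometry together with a truncation of the $\eta$-scaled time variable to handle the flow-linearization error. The difference is organizational: the paper splits the comparison into (a) a pointwise flow-linearization $\u'\to\u'_{\textrm{app}}$ (your step~(ii)), (b) an equal-in-distribution time rescaling $\u'_{\textrm{app}}\to\u'_{\textrm{mod}}$, and (c) the purpose-built Corollary~\ref{cor:ergo}, which packages the parameter freezing (your step~(i)) into the ergodic statement itself by working with a parametrized homogeneous family $\w(\cdot,\cdot,\xbb,\tbb)$. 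Your route freezes all macroscopic coefficients at once and appeals directly to Proposition~\ref{prop:mean_ergo_thm}; this avoids Corollary~\ref{cor:ergo} at the cost of a slightly longer remainder estimate, but the two arguments are equivalent in substance.

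One point you pass over: your coupled field $\w^{(\x,t)}$, being built from the same white noise $\wn$ as $\u'$, necessarily depends on $\delta$ --- either through the $(s,\tilde\y)$-independent phase $\exp\{i\delta^{-1}\kap\cdot\flow(r;\x,t)\}$ or, equivalently, through the time rescaling that brings the $\eta$-argument to micro scale. Hence Proposition~\ref{prop:mean_ergo_thm} cannot literally be applied to a single fixed field along your sequence with $\delta_j\to0$. The fix is immediate: the \emph{law} of $\w^{(\x,t)}$ is $\delta$-independent, and since the ergodic limit is deterministic, the $L^2$-distance to it is a distributional quantity and transfers. The paper handles exactly the same issue via the equal-in-law passage $\u'_{\textrm{app}}\to\u'_{\textrm{mod}}$; you should make the analogous step explicit.
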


\begin{proof} 
We first focus on the assertions \eqref{eq:ergodicity_thm_inhom_A},
\eqref{eq:ergodicity_thm_inhom_B} and 
the last statement 
concerning the one point correlations.
As in the proof of Proposition \ref{prop:ergodicity_hom} we adopt a unified perspective and represent the average integrals in the form $\mint{-}\mint{-} \phi(\u'(\y,s)) \,\d\y\,\d s$  with suitably chosen functions $\phi \colon \R^3 \to \R$, taking $\phi(\u) = u_l$ with $l \in \{1,2,3\}$ in case \eqref{eq:ergodicity_thm_inhom_A},
$ \phi(\u) = \|\u\|^2/2 $ in case $\eqref{eq:ergodicity_thm_inhom_B}$,
and $\phi(\u) = u_j u_l$ with $j,l \in \{1,2,3\}$ 
for the one-point correlations,
$\u = ( u_1,u_2,u_3) \in \R^3$.
Note that the mean-square continuity of $\u'$ established in Lemma \ref{lem:ms_diff_u'} entails the continuity of $\R^3 \times \R \ni (\y,s) \mapsto \phi(\u'(\y,s)) \in L^2(\Pr;\R)$, so that all appearing Bochner integrals are defined. 
In the non-trivial cases $\phi(\u) = \|\u\|^2/2$ 
and $\phi(\u) = u_j u_l$,
this can be seen, e.g., by employing the Kahane-Khinchin inequality for Gaussian random variables, cf.~\cite[Theorem~6.2.6]{HvNVW17}. The main idea of the proof is to reformulate and simplify the claimed convergences in a suitable way in order to be able to apply the ergodicity result from Corollary~\ref{cor:ergo} in Appendix~\ref{sec:ergodicity} to the random field $\w = ( \w(\x,t,\xbb,\tbb))_{(\x,t),(\xbb,\tbb) \in \R^3 \times \R}$ defined by
\begin{equation}\label{eq:ergo_inhom_def_w}
\begin{aligned}
\w(\x,t,\xbb,\tbb)  = 
\Re\! &\int_{\R^3 \times \R} \Bigl(\frac1{\st(\xbb,\tbb)}\Bigr)^{1/2}\eta\Bigl(\frac{1}{\st(\xbb,\tbb)} ( t - r )\Bigr) 
\exp\bigl\{ i \, \kap\cdot \x \bigr\} \,\fxsbb(\kap; \xbb,\tbb)\, \Pbb(\kap;\xbb,\tbb) \cdot \wn(\d\kap,\d r)
\end{aligned}
\end{equation}
with $\fxsbb$ and $\Pbb$ as introduced in \eqref{eq:def_etabb_fxbb}. For this purpose, fix $(\x,t) \in \R^3 \times \R$ and note that the turbulence field $\u' = (\u'(\y,s))_{(\y,s) \in \R^3 \times \R}$ is approximated near $(\x,t)$ by the field $\u'_{\textrm{app}} = (\u'_{\textrm{app}}(\y,s))_{(\y,s) \in \R^3 \times \R}$ given by
\begin{equation}\label{eq:def_u_app}
\begin{aligned}
\u'_{\textrm{app}}(\y,s) = 
\Re\! & \int_{\R^3 \times \R}\Bigl(\frac1{\delta\st(\y,s)}\Bigr)^{1/2}\eta\Bigl(\frac{1}{\delta\st(\y,s)}(s-r)\Bigr) 
\\ & \vphantom{\int} \exp\Bigl\{ i \frac{1}{\delta}\, \kap\cdot \Bigl( \y + \int_s^r \overline{\u}\bigl( \flow(\tau;\x,t),\tau \bigr) \,\d \tau  \Bigr) \Bigr\} 
\,\fxsbb(\kap; \y, s) \, \Pbb(\kap;\y,s) \cdot \wn(\d\kap,\d r)
\end{aligned}
\end{equation}  
in the sense that
\begin{equation}\label{eq:ergo_approx_u_app}
\begin{aligned}
\lim_{\delta,R,R' \to 0} 
\sup\Bigl\{ \bigl\|\u'(\y,s)-\u'_{\textrm{app}}(\y,s) \bigr\|_{L^2(\Pr;\R^3)}\colon s\in B_R^{(1)}(t) ,\, \y\in B_{R'}^{(3)}\bigl( \flow(s;\x,t) \bigr)\Bigr\}
=0.
\end{aligned}
\end{equation}
We remark that the term to the right of the integral sign in the first line of \eqref{eq:def_u_app} is nothing but $\etabb(s-r;\y,s)$ as introduced in \eqref{eq:def_etabb_fxbb}, written out in full for the sake of comparability with \eqref{eq:def_u_mod} below. The approximation \eqref{eq:ergo_approx_u_app} is essentially due to the regularity properties of the flow function $\flow$, in particular the continuity of $ (r,\y,s) \mapsto \flow(r;\y,s)$ from $\R \times \R^3 \times \R$ to $\R^3$, which can be exploited, e.g., by expanding the error $\| \u'(\y,s) - \u'_{\textrm{app}}(\y,s) \|_{L^2(\Pr;\R^3)}$ by means of a suitable telescopic sum involving a truncation of the integrand $\eta$ of the form $\ind_{[-N,N]}\;\!\eta$, where $N \in \N$. Moreover, observe that \eqref{eq:ergo_approx_u_app} and the  Kahane-Khinchin inequality for Gaussian random variables imply an analogous approximation of $\phi(\u'(\y,s))$ by $\phi(\u'_{\textrm{app}}(\y,s))$ in $L^2(\Pr;\R)$. As a first simplification, we thus obtain that it is sufficient to verify \eqref{eq:ergodicity_thm_inhom_A}, \eqref{eq:ergodicity_thm_inhom_B},
and the supplementary assertion
with $\u'_\textrm{app}(\y,s)$ in place of $\u'(\y,s)$. Next observe that the covariance formula in Corollary \ref{cor:covmain} in combination with the change of variables formula yields that, for fixed $(\x,t)$ and $\delta$, the distribution of the random field $\u'_{\textrm{app}}$ is identical to the distribution of the modified field $\u'_{\textrm{mod}} = (\u'_{\textrm{mod}}(\y,s))_{(\y,s) \in \R^3 \times \R}$ defined by
\begin{equation}\label{eq:def_u_mod}
\begin{aligned}
\u'_{\textrm{mod}}(\y,s)  = 
\Re\! &\int_{\R^3 \times \R} \Bigl(\frac1{\st(\y,s)}\Bigr)^{1/2}\eta\Bigl(\frac{1}{\st(\y,s)} \Bigl( \frac{s}{\delta} - r \Bigr)\Bigr) 
\\ & \vphantom{\int} \exp\Bigl\{ i \frac{1}{\delta}\, \kap\cdot \Bigl( \y + \int_s^t \overline{\u}\bigl( \flow(\tau;\x,t),\tau\bigr)\,\d\tau \Bigr) \Bigr\} 
\,\fxsbb(\kap; \y,s) \, \Pbb(\kap;\y,s) \cdot \wn(\d\kap,\d r)
\end{aligned}
\end{equation}
As a second simplification, the fact that the limit values in
the claimed convergences
are non-random thus ensures that it is sufficient to verify 
the
assertions with $\u'_{\textrm{mod}}(\y,s)$ in place of $\u'(\y,s)$. To this end, note that the change of variables formula implies that
\begin{equation}\label{eq:ergo_inhom_transf}
\begin{aligned}
\mint{-}_{B_{R}^{(1)}(t)} & \mint{-}_{B_{R'}^{(3)}( \flow(s;\x,t))} \phi\bigl( \u'_{\textrm{mod}}(\y,s) \bigr) \,\d \y \, \d s
\\ &  = \mint{-}_{B_{R/\delta}^{(1)}(0)} \mint{-}_{B_{R'/\delta}^{(3)}(\bm{0})} \phi\bigl( \u'_{\textrm{mod}}( \flow(t+\delta s;\x,t) + \delta \y,\;\! t+\delta s) \bigr) \,\d \y \, \d s.
\end{aligned}
\end{equation}
In view of the random field $\w$ introduced in \eqref{eq:ergo_inhom_def_w} above and the fact that $\flow(t+\delta s;\x,t) = \x + \int_t^{t+\delta s} \overline{\u}(\flow(\tau;\x,t),\tau) \,\d \tau$, the argument of $\phi$ appearing in the integrand on the right-hand side of \eqref{eq:ergo_inhom_transf} can be rewritten as
\begin{equation*}
\u'_{\textrm{mod}}\bigl( \flow(t+\delta s;\x,t) + \delta \y , \;\! t+\delta s\bigr) = \w\Bigl( \frac{1}{\delta} \x + \y ,\;\! \frac{t}{\delta} + s ,\;\! \flow(t+\delta s;\x,t) + \delta \y ,\;\! t+\delta s \Bigr).
\end{equation*}
Moreover, an application of the transformation formula in Proposition \ref{prop:linear_isometry_on_white_noise_integral} similar to the reasoning in the proof of Lemma \ref{lem:transformation2} in combination with the covariance formula in Lemma \ref{lem:covmain} show that $\w$ satisfies the covariance condition \eqref{eq:cov_pert_ergo} in Corollary \ref{cor:ergo}. In addition, arguments similar to the ones in the proof of Lemma \ref{lem:ms_diff_u'} and the Kahane-Khinchin inequality for Gaussian random variables yield that $\w$ and $\phi$ fulfill the mean-square continuity assumptions in Corollary \ref{cor:ergo}. 
Considering arbitrary sequences $(R_j)_{j\in\N}, (R_j')_{j\in\N} \subset \R_0^+$, $(\delta_j)_{j\in\N}\subset \R^+$ such that $\lim_{j\to\infty} R_j = \lim_{j\to\infty} R_j' = 0$, $\lim_{j\to\infty} R_j/\delta_j = L$, and $\lim_{j\to\infty} R'_j/\delta_j = L'$, 
the convergences 
in \eqref{eq:ergodicity_thm_inhom_A}, \eqref{eq:ergodicity_thm_inhom_B}, 
and the supplementary assertion 
thus follow by applying Corollary \ref{cor:ergo} (with $\mathcal{R}_j = R_j/\delta_j$, $\mathcal{R}'_j = R'_j/\delta_j$, $\x_j = \x/\delta_j$, $t_j = t/\delta_j$, 
$\xbb=\x$, $\tbb=t$, 
$\bm{\alpha}_j(\y,s) = \frac{1}{\delta_j} \int_t^{t+\delta_j s} \overline{\u}(\flow(\tau;\x,t),\tau) \,\d \tau+\y$,
and $\beta_j(\y,s) = s$ for $(\y,s) \in \R^3 \times \R$ in the notation of Corollary \ref{cor:ergo}) and taking into account that $\E[\w(\bm{0},0,\x,t)] = \bm{0}$, $\E\|\w(\bm{0},0,\x,t)\|^2/2 = k(\x,t)$ and 
$\E[ \w(\bm{0},0,\x,t) \otimes \w(\bm{0},0,\x,t) ] 
= k(\x,t)\,[(7/15)\,\bm{L}(\x,t)\cdot\bm{L}(\x,t){\vphantom{)}}^{\!\top} + (1/5)\;\!\bm{I}]$.

The strategy of the proof of the previous three statements based on Corollary \ref{cor:ergo} can be adapted in order to establish assertion \eqref{eq:ergodicity_thm_inhom_C}. A suitable substitute for the approximation step \eqref{eq:ergo_approx_u_app} above is obtained by employing the field $(\delta \nabla_\y \u')_{\textrm{app}} = ( (\delta \nabla_\y \u')_{\textrm{app}}(\y,s))_{(\y,s) \in \R^3 \times \R}$ defined by
\begin{equation*}\label{eq:def_delta_partial_u_app}
\begin{aligned}
(\delta \nabla_\y \u')_{\textrm{app}}(\y,s) = 
\Re\! &\int_{\R^3 \times \R}\Bigl(\frac1{\delta\st(\y,s)}\Bigr)^{1/2}\eta\Bigl(\frac{1}{\delta\st(\y,s)}(s-r)\Bigr) \,\fxsbb(\kap; \y, s)
\\ & \vphantom{\int} i \exp\Bigl\{ i \frac{1}{\delta}\, \kap\cdot\Bigl( \y + \int_s^r \overline{\u}\bigl( \flow(\tau;\x,t),\tau \bigr) \,\d \tau \Bigr) \Bigr\} 
 \, \kap \otimes \Pbb(\kap;\y,s) \cdot \wn(\d\kap,\d r)
\end{aligned}
\end{equation*}
as an approximation of $\delta \nabla_\y \u'$ near $(\x,t)$. Choosing a suitable $\R^{3\times 3}$-valued random field $\w = ( \w(\x,t,\xbb,\tbb))_{(\x,t),(\xbb,\tbb) \in \R^3 \times \R}$ and using an identification mapping $\bm{\pi}\colon \R^{3\times 3} \to \R^9$, an application of Corollary \ref{cor:ergo} with $\phi \colon \R^9 \to \R$, $\phi(\u) = \| \bm{\pi}^{-1}(\u) + (\bm{\pi}^{-1}(\u))^\top\|^2$, establishes the convergence in \eqref{eq:ergodicity_thm_inhom_C} similarly to the first part of the proof.
\end{proof}


\section{Outlook}\label{sec:outlook}

The 
analytical derivations and results 
presented 
in this article are 
complemented 
in 
the accompanying paper \cite{AKLMW25}, 
which concerns the numerical approximation of Model~\ref{model:turb_inhom} 
and discusses 
the key model features 
by means of a variety of 
simulation results. 
In particular, we 
establish a suitable discretization scheme 
combining a randomized quadrature method for stochastic integrals 
with a local linearization of the  non-uniform mean flow advection. 
Convergence of the scheme towards the continuous model ist proven, 
and an efficient algorithmic implementation is 
described 
that allows for a flexible localized synthesis of the inhomogeneous field. 
The presented numerical simulations 
are based on 
the model spectrum and the time integration kernel from Examples \ref{ex:energy_spectrum}, \ref{ex:temporal_cor}, and \ref{ex:energy_spectrum_correlation_function} 
and illustrate, among others, 
the influence of the inhomogeneous scaling factors $\su$, $\sx$, $\st$, $\sz$ and the mean flow function $\flow$ 
on the generated fluctuations, 
the characteristic flow and ergodicity properties established in 
Theorems~\ref{thm:kin_diss_div} and \ref{thm:ergodicity_inhom},  
and the 
effect of the local turbulence Reynolds number on 
the validity of Komogorov's two-thirds law. 



\appendix
\renewcommand{\theequation}{\thesection.\arabic{equation}}

\section{Auxiliary results on integration with respect to white noise}\label{app:white_noise}

Here we collect various auxiliary results on stochastic integration \wrt Gaussian white noise. 
Starting with a recap of  the construction of stochastic integrals in Subsection \ref{subsec:appendix_white_noise}, Subsections~\ref{subsec:appendix_covariance_formulas}, \ref{subsec:appendix_transformation_results}, and \ref{subsec:appendix_differentiability} concern covariance formulas, integral transformations, and differentiability properties, respectively. 

\subsection{Complex-valued white noise and associated stochastic integral}
\label{subsec:appendix_white_noise}

We give a definition of com\-plex-valued Gaussian white noise and recall the construction of the associated stochastic integral for deterministic integrands.

\begin{definition}[Gaussian white noise]\label{def:gaussian_white_noise}
Let $U \subset \R^m$ be a Borel set, let $\mu \colon \B(U) \to [0,\infty]$ be a $\sigma$-finite measure, and 
set $\B_0(U) := \{ A \in \B(U) \colon \mu(A) < \infty\}$. 
\begin{enumerate}[label= \emph{\textbf{\alph*)}}]
\item A mapping $\wnr \colon \B_0(U) \to L^2(\Pr;\R)$ is called a  \emph{(real-valued) Gaussian white noise on $U$ with structural measure $\mu$} if \vspace{1mm}
\hspace{-5mm}
\begin{itemize}[leftmargin=8mm]
\item for all $A \in \B_0(U)$ the random variable $\wnr(A)$ has a (possibly degenerate) Gaussian distribution with mean zero and variance $\mu(A)$; \vspace{1mm}
\item for all disjoint sets $A,B \in \B_0(U)$ the random variables $\wnr(A)$, $\wnr(B)$ are independent and satisfy 
$\wnr(A \cup B) = \wnr(A) + \wnr(B)$ $\Pr$-almost surely.
\end{itemize}
\vspace{1mm}
\item A mapping $\wnr \colon \B_0(U) \to L^2(\Pr;\C)$ is called a \emph{(complex-valued) Gaussian white noise on $U$ with structural measure $\mu$} if $\Re\;\!\wnr$ and $\Im \;\! \wnr$ are independent real-valued Gaussian white noises on $U$ with structural measure $\mu/2$. A mapping $\wn = (\wnr_{1},\ldots,\wnr_{\ell}) \colon \B_0(U) \to L^2(\Pr;\C^\ell)$ is called a 
\emph{($\C^\ell$-valued) Gaussian white noise on $U$ with structural measure $\mu$} if $\wnr_{1},\ldots,\wnr_{\ell}$ are independent complex-valued Gaussian white noises on $U$ with structural measure $\mu$.
\end{enumerate}
\end{definition}

The above notion of Gaussian white noise is a special instance of more general concepts known under various names such as \emph{stochastic orthogonal measures} or  \emph{random pseudo-measures} \cite{GS74,Bre14}. 
Note that the assumption that the real and imaginary parts of a complex-valued Gaussian white noise are independent and share the same structural measure $\mu/2$ is consistent with the definition of complex-valued Gaussian random variables in the literature \cite{Hida80,LPS14}.

The stochastic integral \wrt a complex-valued Gaussian white noise $\wnr$ on a Borel set $U \subset \R^m$ with structural measure $\mu$ is constructed as follows. For simple functions $g$, the integral is defined as
\begin{equation*}
\int_{U} g(\x) \,\wnr(\d \x) := \sum_{j=1}^n a_j \;\!\wnr(A_j), \qquad g(\x) = \sum_{j=1}^n a_j \;\! \indicator{A_j}(\x), 
\end{equation*} 
where $a_1,\ldots,a_n \in \C$ and $A_1,\ldots,A_n \in \B_0(U)$. It is straightforward to check that this integral satisfies the isometric property
\begin{align}\label{eq:isometric_property_0}
\E \biggl[ \Bigl| \int_U g(\x) \,\wnr(\d\x) \Bigr|^2 \biggr]  = \int_U \bigl| g(\x) \bigr|^2 \mu(\d\x).
\end{align}
As a consequence, the integral mapping $g\mapsto \int_U g(\x) \,\wnr(\d\x)$ can be extended linearly and continuously to an isometric mapping from $L^2(\mu;\C)$ to  $L^2(\Pr;\C)$; cf., e.g., \cite[Section~3.2.3]{Bre14}. The resulting real-valued random variables $\Re\!\!\: \int_U g(\x) \,\wnr(\d\x)$, $\Im\!\!\: \int_U g(\x) \,\wnr(\d\x)$,  $g\in L^2(\mu;\C)$, are centered and constitute a jointly Gaussian family. 
 
This construction is further extended to the multidimensional case in a canonical way. 
In particular, if $\wn = (\wnr_{1},\ldots,\wnr_{\ell})$ is a $\C^\ell$-valued Gaussian white noise on $U$ with structural measure $\mu$, then an isometric property analogous to \eqref{eq:isometric_property_0} holds true for $\C^{d\times \ell}$-valued integrands $\bm{G} \in L^2(\mu;\C^{d\times \ell})$, namely
\begin{align}\label{eq:isometric_property_1}
\E \biggl[ \Bigl\| \int_U \bm{G}(\x) \cdot \wn(\d\x) \Bigr\|^2 \biggr] = \int_U \bigl\| \bm{G}(\x) \bigr\|^2 \mu(\d\x).
\end{align}
While the specific form of the multidimensional isometry property \eqref{eq:isometric_property_1} is due to the independence of the components $\wnr_{1},\ldots,\wnr_{\ell}$ of $\wn$, the additional assumption in Definition~\ref{def:gaussian_white_noise} of independence and identical distributions of the real and imaginary parts entails the further isometric property
\begin{align}\label{eq:isometric_property_real_part}
\E \biggl[ \Bigl\| \, \Re \! \int_U \bm{G}(\x) \cdot \wn(\d\x) \Bigr\|^2 \biggr] 
=  \E \biggl[ \Bigl\| \, \Im \! \int_U \bm{G}(\x) \cdot \wn(\d\x) \Bigr\|^2 \biggr] 
=  \frac{1}{2}\int_U \bigl\| \bm{G}(\x) \bigr\|^2 \mu(\d\x).
\end{align}
The identities in \eqref{eq:isometric_property_real_part} are verified in the same way as \eqref{eq:isometric_property_0}, \eqref{eq:isometric_property_1} by following the single steps of the construction of the stochastic integral.

\subsection{Covariance formulas}\label{subsec:appendix_covariance_formulas}

The following covariance formula for white noise integrals is frequently used throughout this article. 

\begin{lemma}\label{lem:covmain}
Let $U \subset \R^m$ be a Borel set, let $\mu \colon \B(U) \to [0,\infty]$ be a $\sigma$-finite measure, let $\wn$ be a $\C^\ell$-valued Gaussian white noise on $U$ with structural measure $\mu$, and let $\bm{G}, \bm{H} \in L^2(\mu;\C^{d\times \ell})$. Then it holds that
\begin{align*}
\E\Bigl[ \;\!\Re\! \int_{U} \bm{G}(\y) \cdot \wn(\d \y) \otimes \Re\! \int_{U} \bm{H}(\y) \cdot \wn(\d \y) \Bigr] &  = \frac{1}{2} \, \Re\! \int_U \bm{G}(\y) \cdot \overline{\bm{H}(\y)}^\top \mu(\d \y). 
\end{align*}
\end{lemma}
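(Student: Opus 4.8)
The plan is to reduce the matrix-valued identity to scalar covariance identities, one for each pair of row indices, and to obtain each of these from the isometric property \eqref{eq:isometric_property_real_part} by polarization.

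First I would fix $a,b\in\{1,\dots,d\}$ and write $\bm{g}_a,\bm{h}_b\in L^2(\mu;\C^{1\times\ell})$ for the $a$-th row of $\bm{G}$ and the $b$-th row of $\bm{H}$, so that $X_a := \Re\!\int_U \bm{g}_a(\y)\cdot\wn(\d\y)$ and $Y_b := \Re\!\int_U \bm{h}_b(\y)\cdot\wn(\d\y)$ are the $a$-th and $b$-th components of $\Re\!\int_U\bm{G}\cdot\wn$ and $\Re\!\int_U\bm{H}\cdot\wn$, respectively. Both lie in $L^2(\Pr;\R)$, so the elementary polarization identity $\E[X_aY_b] = \tfrac14\bigl(\E[(X_a+Y_b)^2]-\E[(X_a-Y_b)^2]\bigr)$ applies. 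By linearity of the white noise integral in its integrand and additivity of $\Re$, one has $X_a\pm Y_b = \Re\!\int_U(\bm{g}_a\pm\bm{h}_b)\cdot\wn$, and the $d=1$ instance of \eqref{eq:isometric_property_real_part} gives $\E[(X_a\pm Y_b)^2] = \tfrac12\int_U\|\bm{g}_a(\y)\pm\bm{h}_b(\y)\|^2\,\mu(\d\y)$.

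Next I would combine these two contributions using the identity $\|\bm{u}+\bm{v}\|^2-\|\bm{u}-\bm{v}\|^2 = 4\,\Re(\bm{u}\cdot\overline{\bm{v}})$ for $\bm{u},\bm{v}\in\C^\ell$, which yields $\E[X_aY_b] = \tfrac12\,\Re\!\int_U\bm{g}_a(\y)\cdot\overline{\bm{h}_b(\y)}\,\mu(\d\y)$; since $\bm{g}_a\cdot\overline{\bm{h}_b}$ is precisely the $(a,b)$ entry of $\bm{G}\cdot\overline{\bm{H}}^\top$, collecting these identities over all $a,b$ gives the claimed formula. Here all integrals are legitimate, since $\bm{G},\bm{H}\in L^2(\mu;\C^{d\times\ell})$ together with the Cauchy--Schwarz inequality imply $\bm{G}\cdot\overline{\bm{H}}^\top\in L^1(\mu;\C^{d\times d})$, and $X_a,Y_b\in L^2(\Pr)$ ensures $\Re\!\int_U\bm{G}\cdot\wn\otimes\Re\!\int_U\bm{H}\cdot\wn\in L^1(\Pr;\R^{d\times d})$.

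I do not expect a genuine obstacle here: the argument is essentially bookkeeping, the only mildly delicate points being the passage from the matrix statement to the entrywise scalar statements (keeping the conventions $\bm{a}\cdot\bm{b}=\sum_j a_jb_j$ and $\bm{a}\otimes\bm{b}=(a_jb_k)_{j,k}$ consistent) and the verification of integrability so that every expectation and every Lebesgue integral appearing in the polarization is finite. Alternatively, one could avoid polarization by decomposing $\wn=\wn_1+i\wn_2$ into independent real-valued Gaussian white noises with structural measure $\mu/2$, splitting $\bm{G}$ and $\bm{H}$ into real and imaginary parts, expanding the tensor product, and discarding the vanishing cross terms by independence; this leads to the same computation but with more terms to track.
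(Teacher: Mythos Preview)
Your proposal is correct and follows essentially the same approach as the paper: reduce to scalar entries via row vectors $\bm{g}_a$, $\bm{h}_b$, then polarize the isometric identity \eqref{eq:isometric_property_real_part}. The only cosmetic difference is that the paper uses the variant $2XY=(X+Y)^2-X^2-Y^2$ of the polarization identity rather than your $4XY=(X+Y)^2-(X-Y)^2$, leading to the same conclusion in one fewer term.
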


\begin{proof}
The expected value of the $\R^{\ell \times \ell}$-valued random variable on the left-hand side will be calculated in a componentwise manner.
To abbreviate calculations, let $\bm{g}_j = (G_{j,l})_{l = 1}^{\ell} \in \C^\ell$ and $\bm{h}_j = (H_{j,l})_{l = 1}^{\ell} \in \C^\ell$ be the $j$th row of $\bm{G}$ and $\bm{H}$, respectively, for any $j \in \{1,\ldots,d\}$. Using the isometric property \eqref{eq:isometric_property_real_part}, 
we find for any $j,k \in \{1,\ldots,\ell\}$ that
\begin{align*}
& \E\biggl[ \Bigl( \Re\! \int_{U} \bm{G}(\y) \cdot \wn(\d \y) \Bigr)_j  \, \Bigl( \Re\! \int_{U} \bm{H}(\y) \cdot \wn(\d \y) \Bigr)_k \,\biggr]\\ 
& \;\; = \frac{1}{2} \biggl( \E\biggl[ \Bigl| \Re\! \int_{U} \bigl(\bm{g}_j(\y) + \bm{h}_k(\y) \bigr) \cdot \wn(\d \y) \Bigr|^2 \biggr] -\E\biggl[ \Bigl| \Re\! \int_{U} \bm{g}_j(\y) \cdot \wn(\d \y) \Bigr|^2 \biggr] 
- \E\biggl[ \Bigl| \Re\! \int_{U} \bm{h}_k(\y) \cdot \wn(\d \y) \Bigr|^2\biggr] \biggr)\\ 
& \;\; = \frac{1}{4} \int_U \bigl\| \bm{g}_j(\y) + \bm{h}_k(\y) \bigr\|^2 -  \bigl\| \bm{g}_j(\y) \bigr\|^2 -  \bigl\| \bm{h}_k(\y) \bigr\|^2 \mu(\d \y)
\;=\; 
\frac{1}{2} \, \Re\! \int_U \bm{g}_j(\y) \cdot \overline{\bm{h}_k(\y)} \,\mu(\d \y),  
\end{align*}
which concludes the proof.
\end{proof}

As a direct consequence of Lemma \ref{lem:covmain} we obtain the following result concerning stochastic Fourier-type integrals.

\begin{corollary}\label{cor:covmain}
In the setting of Lemma \ref{lem:covmain} we consider the case
\begin{align*}
\bm{G}(\y) = \exp\bigl\{ i\phi(\y) \bigr\} \bm{R}(\y), \qquad \bm{H}(\y) = \exp\bigl\{ i\psi(\y) \bigr\} \bm{Q}(\y), 
\end{align*}
$\y \in U$, for measurable functions $\phi, \psi \colon U \to \R$ and $\bm{R},\bm{Q} \in L^2(\mu;\R^{d\times \ell})$. Then
\begin{align*}
\E\Bigl[ \Re\! \int_{U} \bm{G}(\y) \cdot \wn(\d \y) \otimes \Re\! \int_{U} \bm{H}(\y) \cdot \wn(\d \y) \Bigr] = \frac{1}{2}\int_{U} \cos\bigl( \phi(\y) - \psi(\y) \bigr) \bm{R}(\y) \cdot \bm{Q}(\y)^\top \mu(\d \y).
\end{align*}
\end{corollary}

\smallskip

\subsection{Transformation and representation results}\label{subsec:appendix_transformation_results}

This subsection is devoted to the impact of different kinds of transformations on Gaussian white noises and stochastic integrals.

We start with a simple result regarding transformations  involving weight functions. 

\begin{lemma}\label{lem:multiplication}
Let $U \subset \R^m$ be a Borel set, let $\mu \colon \B(U) \to [0,\infty]$ be a $\sigma$-finite measure, let $\wn$ be a $\C^\ell$-valued Gaussian white noise on $U$ with structural measure $\mu$, and let $\rho\colon U\to \R$ be a measurable function such that the measure $\rho^2 \mu\colon \B(U) \to [0,\infty]$ given by $(\rho^2\mu)(A):=\int_A \rho^2(\x)\,\mu(\d\x)$ is $\sigma$-finite. 
Then the mapping $\rho\;\!\wn$ that assigns to every $A\in\B(U)$ with $(\rho^2\mu)(A)<\infty$ the random vector
\begin{align*}
 (\rho \;\! \wn)(A) := \int_A \rho(\x) \,\wn(\d \x)
 \end{align*}
is a $\C^\ell$-valued Gaussian white noise on $U$ with structural measure $\rho^2 \mu$. 
Moreover, for every $\bm{G} \in L^2(\rho^2\mu ;\C^{d\times \ell})$ it holds that
\begin{align*}
\int_U \bm{G}(\x) \cdot (\rho\;\!\wn)(\d \x) = \int_U \rho(\x) \bm{G}(\x) \cdot \wn(\d \x).
\end{align*}
\end{lemma}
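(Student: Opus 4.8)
The plan is to prove Lemma~\ref{lem:multiplication} in two stages, first establishing that $\rho\,\wn$ is a Gaussian white noise with the claimed structural measure, and then verifying the integral-transformation identity by the standard ``simple functions first, then density'' argument. Throughout, I would work componentwise, reducing the $\C^\ell$-valued statement to the scalar complex-valued case, and then further to the real-valued case, exactly as in Definition~\ref{def:gaussian_white_noise}.

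First I would check that $\rho\,\wn$ is well-defined: for $A\in\B(U)$ with $(\rho^2\mu)(A)<\infty$ the function $\rho\,\indicator{A}$ lies in $L^2(\mu;\C)$, so the stochastic integral $\int_A\rho(\x)\,\wn(\d\x)$ makes sense as an element of $L^2(\Pr;\C^\ell)$. For the Gaussian white noise property, fix a single component $\wnr$ of $\wn$ (a complex-valued Gaussian white noise on $U$ with structural measure $\mu$) and the corresponding component of $\rho\,\wn$. Gaussianity of $(\rho\,\wnr)(A)$ follows since it is a limit in $L^2(\Pr;\C)$ of Gaussian random variables (integrals of simple functions approximating $\rho\,\indicator{A}$), and the relevant mean-zero/variance structure is inherited from $\wnr$. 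The variance identity $\E[|(\rho\,\wnr)(A)|^2]=\int_A\rho^2\,\d\mu=(\rho^2\mu)(A)$ is immediate from the isometric property \eqref{eq:isometric_property_0}. For disjointness: if $A,B\in\B(U)$ are disjoint with finite $(\rho^2\mu)$-measure, then $\rho\,\indicator{A}$ and $\rho\,\indicator{B}$ have disjoint supports, so $\Re$ and $\Im$ of $\int_A\rho\,\d\wnr$ and $\int_B\rho\,\d\wnr$ form a jointly Gaussian family with pairwise covariances given via \eqref{eq:isometric_property_0} (polarized) by integrals of products $\rho^2\,\indicator{A}\indicator{B}=0$; hence they are uncorrelated, and jointly Gaussian plus uncorrelated gives independence. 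Additivity $(\rho\,\wnr)(A\cup B)=(\rho\,\wnr)(A)+(\rho\,\wnr)(B)$ is linearity of the integral applied to $\rho\,\indicator{A\cup B}=\rho\,\indicator{A}+\rho\,\indicator{B}$. Finally, to confirm that the real and imaginary parts are independent with common structural measure $(\rho^2\mu)/2$, I would note that $\Re(\rho\,\wnr)$ and $\Im(\rho\,\wnr)$ are obtained from $\Re\wnr$ and $\Im\wnr$ — which are independent real Gaussian white noises with structural measure $\mu/2$ — by the same real weighting, reducing to the real-valued case already treated; independence of the two families is preserved because they are built from the independent pair $(\Re\wnr,\Im\wnr)$. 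Doing this for each of the $\ell$ independent components, and noting independence across components is preserved, completes the first part.

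For the integral identity, I would proceed by the usual approximation scheme. For a simple function $\bm{G}=\sum_{j=1}^n \bm{A}_j\,\indicator{B_j}$ with $\bm{A}_j\in\C^{d\times\ell}$ and $B_j\in\B(U)$ of finite $(\rho^2\mu)$-measure, both sides reduce by linearity to a finite sum, and the equality $\int_{B_j}\rho(\x)\,\wn(\d\x)=(\rho\,\wn)(B_j)$ holds by the very definition of $\rho\,\wn$; hence the identity holds for simple $\bm{G}$. For general $\bm{G}\in L^2(\rho^2\mu;\C^{d\times\ell})$, pick simple functions $\bm{G}_k\to\bm{G}$ in $L^2(\rho^2\mu;\C^{d\times\ell})$; then $\rho\,\bm{G}_k\to\rho\,\bm{G}$ in $L^2(\mu;\C^{d\times\ell})$, so by the isometry \eqref{eq:isometric_property_1} applied to $\wn$ the right-hand sides converge in $L^2(\Pr;\C^d)$ to $\int_U\rho\,\bm{G}\cdot\d\wn$, while by the isometry for the white noise $\rho\,\wn$ (structural measure $\rho^2\mu$, just established) the left-hand sides converge to $\int_U\bm{G}\cdot\d(\rho\,\wn)$. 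Passing to the limit in the identity for simple functions yields the claim.

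I do not expect a genuine obstacle here; the result is essentially a bookkeeping exercise. The only point requiring mild care is the very last density step: one must ensure the approximating simple functions can be chosen so that both $\bm{G}_k\to\bm{G}$ in $L^2(\rho^2\mu)$ and $\rho\,\bm{G}_k\to\rho\,\bm{G}$ in $L^2(\mu)$ simultaneously — but since $\|\rho\,(\bm{G}_k-\bm{G})\|_{L^2(\mu)}=\|\bm{G}_k-\bm{G}\|_{L^2(\rho^2\mu)}$, a single approximating sequence works for both. A secondary subtlety is verifying $\sigma$-finiteness is genuinely used so that $L^2(\rho^2\mu)$ and the white noise $\rho\,\wn$ are well-behaved; this is exactly the hypothesis imposed, so no extra work is needed.
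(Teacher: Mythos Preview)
Your proposal is correct and follows essentially the same approach as the paper's proof, which is very terse: the paper simply declares the white noise property ``obvious,'' states the integral identity is ``trivial for simple functions,'' and invokes the isometry \eqref{eq:isometric_property_1} together with the equivalence $\bm{G}_n\to\bm{G}$ in $L^2(\rho^2\mu)$ iff $\rho\,\bm{G}_n\to\rho\,\bm{G}$ in $L^2(\mu)$ for the density step. You have spelled out in detail exactly what the paper compresses into two sentences, including the key norm identity $\|\rho(\bm{G}_k-\bm{G})\|_{L^2(\mu)}=\|\bm{G}_k-\bm{G}\|_{L^2(\rho^2\mu)}$ that makes the approximation work simultaneously on both sides.
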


\begin{proof}
Obviously, $\rho\:\!\wn$ is a $\C^\ell$-valued Gaussian white noise with structural measure $\rho^2\mu$ and the integral identity is trivial for simple functions $\bm{G}$. The general case is a consequence of the isometric property \eqref{eq:isometric_property_1} of the stochastic integral and the fact that $\bm{G}_n \to \bm{G}$ in $L^2(\rho^2\mu;\C^{d\times \ell})$ if, and only if, $\rho\;\!\bm{G}_n \to  \rho\;\!\bm{G}$ in $L^2(\mu;\C^{d\times \ell})$.
\end{proof}

In classical integration theory the change of variables formula is an effective evaluation method. The next result presents an analogue for white noise integrals.

\begin{proposition}[Change of variables]\label{prop:pushforward}
Let $U \subset \R^m$ and $V \subset \R^n$ be Borel sets, let $\mu \colon \B(U) \to [0,\infty]$ be a $\sigma$-finite measure, let $\wn$ be a $\C^\ell$-valued Gaussian white noise on $U$ with structural measure $\mu$, and let  $\bm{\phi} \colon U \to V$ be a measurable function.  
Then the following assertions hold:
\begin{enumerate}[label= \emph{\textbf{\alph*)}}] 
\item 
The mapping $\wn \circ \bm{\phi}^{-1}$ that assigns to every $A\in\B(V)$ with $\mu(\bm{\phi}^{-1}(A))<\infty$ the random vector $\wn(\bm{\phi}^{-1}(A))$ is a $\C^\ell$-valued Gaussian white noise on $V$ with structural measure $\mu \circ \bm{\phi}^{-1}$. Moreover, for every $\bm{G} \in L^2(\mu \circ \bm{\phi}^{-1};\C^{d\times \ell})$ we have  that
\begin{align*}
\int_V \bm{G}(\y) \cdot (\wn \circ \bm{\phi}^{-1})(\d \y)=\int_{U} \bm{G}(\bm{\phi}(\x)) \cdot \wn(\d \x).
\end{align*}
\item 
Additionally assume that $m=n$, that $\bm{\phi}$ is a $C^1$ diffeomorphism, and that the structural measure of $\wn$ is given by $\mu=\alpha\lambda^m|_U$, where  $\alpha\in\R^+$ is a constant factor and $\lambda^m|_U$ denotes Lebesgue measure on $U$. 
Then the mapping $\wt\wn$ that assigns to every $A\in\B(V)$ with $\lambda^m(A)<\infty$ the random vector 
\begin{align*}
\wt\wn(A) = \int_A \left| \det (D\bm{\phi})(\bm{\phi}^{-1}(\y)) \right|^{1/2}(\wn \circ \bm{\phi}^{-1})(\d \y)
\end{align*} 
is a $\C^\ell$-valued Gaussian white noise on $V$ with structural measure $\alpha\lambda^m|_V$. Moreover, for every $\bm{G} \in L^2(\lambda^m|_V;\C^{d\times \ell})$ we have that  
\begin{align*}
\int_{V} \bm{G}(\y)  \cdot \wt\wn(\d \y)
= \int_U \bigl| \det (D\bm{\phi})(\x)\bigr|^{1/2} \bm{G}(\bm{\phi}(\x))  \cdot \wn(\d \x).
\end{align*}
\end{enumerate}
\end{proposition}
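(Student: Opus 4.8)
\textbf{Proof proposal for Proposition~\ref{prop:pushforward}.}

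The plan is to prove part a) directly from Definition~\ref{def:gaussian_white_noise} and the construction of the stochastic integral recalled in Subsection~\ref{subsec:appendix_white_noise}, and then to deduce part b) by combining part a) with the multiplicative transformation result Lemma~\ref{lem:multiplication}. For part a), I would first check that $\wn\circ\bm{\phi}^{-1}$ is a Gaussian white noise on $V$ with structural measure $\mu\circ\bm{\phi}^{-1}$: by definition of the pushforward measure we have $(\mu\circ\bm\phi^{-1})(A)=\mu(\bm\phi^{-1}(A))$, so $\wn(\bm\phi^{-1}(A))$ has a centered Gaussian distribution with the correct variance; measurability of $\bm\phi$ ensures $\bm\phi^{-1}(A)\in\B(U)$ for $A\in\B(V)$; and if $A,B\in\B(V)$ are disjoint then $\bm\phi^{-1}(A),\bm\phi^{-1}(B)$ are disjoint in $U$, so the corresponding random variables are independent and $\wn\circ\bm\phi^{-1}(A\cup B)=\wn(\bm\phi^{-1}(A)\cup\bm\phi^{-1}(B))=\wn(\bm\phi^{-1}(A))+\wn(\bm\phi^{-1}(B))$ $\Pr$-a.s. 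This verifies all items of Definition~\ref{def:gaussian_white_noise}, first for the real scalar case and then, reducing to real and imaginary parts and to components, for the $\C^\ell$-valued case. For the integral identity in a), I would argue by the standard density/approximation scheme: for a simple function $\bm G=\sum_j \bm A_j \indicator{B_j}$ with $B_j\in\B(V)$ of finite $\mu\circ\bm\phi^{-1}$-measure, both sides equal $\sum_j\bm A_j\cdot\wn(\bm\phi^{-1}(B_j))$ by definition, using that $\indicator{B_j}\circ\bm\phi=\indicator{\bm\phi^{-1}(B_j)}$; then the general case $\bm G\in L^2(\mu\circ\bm\phi^{-1};\C^{d\times\ell})$ follows by approximating $\bm G$ in that space by simple functions, noting that $\bm G_n\to\bm G$ in $L^2(\mu\circ\bm\phi^{-1})$ forces $\bm G_n\circ\bm\phi\to\bm G\circ\bm\phi$ in $L^2(\mu)$ by the change-of-variables formula for Lebesgue integrals, and invoking the isometry \eqref{eq:isometric_property_1} to pass to the limit on both sides.

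For part b), I would apply part a) with the specific diffeomorphism $\bm\phi$ to obtain the white noise $\wn\circ\bm\phi^{-1}$ on $V$ with structural measure $\mu\circ\bm\phi^{-1}$. Since $\mu=\alpha\lambda^m|_U$ and $\bm\phi$ is a $C^1$ diffeomorphism, the classical change of variables formula gives $(\mu\circ\bm\phi^{-1})(A)=\alpha\int_{\bm\phi^{-1}(A)}\d\lambda^m=\alpha\int_A|\det(D\bm\phi)(\bm\phi^{-1}(\y))|^{-1}\,\d\y$, so $\mu\circ\bm\phi^{-1}$ has the density $\rho^2:=\alpha|\det(D\bm\phi)\circ\bm\phi^{-1}|^{-1}$ with respect to $\lambda^m|_V$; equivalently, with $\tilde\rho(\y):=|\det(D\bm\phi)(\bm\phi^{-1}(\y))|^{1/2}$ one has $\tilde\rho^2\cdot(\mu\circ\bm\phi^{-1})=\alpha\lambda^m|_V$. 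Applying Lemma~\ref{lem:multiplication} with the white noise $\wn\circ\bm\phi^{-1}$ and weight function $\tilde\rho$ shows that $\wt\wn=\tilde\rho\cdot(\wn\circ\bm\phi^{-1})$, as defined in the statement, is a $\C^\ell$-valued Gaussian white noise on $V$ with structural measure $\alpha\lambda^m|_V$, and that for $\bm G\in L^2(\lambda^m|_V;\C^{d\times\ell})$
\begin{align*}
\int_V\bm G(\y)\cdot\wt\wn(\d\y)=\int_V\tilde\rho(\y)\,\bm G(\y)\cdot(\wn\circ\bm\phi^{-1})(\d\y).
\end{align*}
Finally, part a) applied to the integrand $\tilde\rho\,\bm G$ (which lies in $L^2(\mu\circ\bm\phi^{-1};\C^{d\times\ell})$ precisely because $\bm G\in L^2(\lambda^m|_V)$ and $\tilde\rho^2\cdot(\mu\circ\bm\phi^{-1})=\alpha\lambda^m|_V$) rewrites the right-hand side as $\int_U\tilde\rho(\bm\phi(\x))\,\bm G(\bm\phi(\x))\cdot\wn(\d\x)$, and $\tilde\rho(\bm\phi(\x))=|\det(D\bm\phi)(\x)|^{1/2}$, which is the claimed identity.

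I do not expect a serious obstacle here; the content is bookkeeping rather than a genuine difficulty. The one point that requires a little care is the consistent handling of the various $L^2$-spaces and the verification that the relevant integrands are square-integrable so that the stochastic integrals are defined — in particular checking that the map $\bm G\mapsto\tilde\rho\,(\bm G\circ\bm\phi)$ (or its inverse-direction analogue) is an isometry between the appropriate $L^2$-spaces, which is exactly the classical change-of-variables formula for the Lebesgue integral. A second minor item is to note at the outset that all identities between random variables are understood as identities in $L^2(\Pr)$, i.e.\ up to $\Pr$-null sets, which makes the approximation argument unambiguous.
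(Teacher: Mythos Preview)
Your proposal is correct and follows essentially the same route as the paper: verify Definition~\ref{def:gaussian_white_noise} for $\wn\circ\bm\phi^{-1}$ via the set-theoretic properties of preimages, establish the integral identity first for indicator/simple functions and then by approximation using the isometry \eqref{eq:isometric_property_1} together with the equivalence $\bm G_n\to\bm G$ in $L^2(\mu\circ\bm\phi^{-1})$ iff $\bm G_n\circ\bm\phi\to\bm G\circ\bm\phi$ in $L^2(\mu)$, and then derive part b) from part a), the classical change of variables formula, and Lemma~\ref{lem:multiplication}. The paper's proof is slightly terser but the logical skeleton is identical.
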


\begin{proof}
Part a) is considered first. By definition, $\wn \circ \bm{\phi}^{-1} \colon \B_0(V) \to L^2(\Pr;\C^d)$ is a centered Gaussian process and its components $\wnr_1 \circ \bm{\phi}^{-1},\ldots,\wnr_\ell\circ \bm{\phi}^{-1}$ are obviously independent with independent real and imaginary parts. The additivity property of the mappings $\Re\;\!\wn \circ \bm{\phi}^{-1}$ and $\Im\;\!\wn \circ \bm{\phi}^{-1}$ follows from the set properties $\bm{\phi}^{-1}(A) \cap \bm{\phi}^{-1}(B) = \bm{\phi}^{-1}(A\cap B)$ and $\bm{\phi}^{-1}(A) \cup \bm{\phi}^{-1}(B) = \bm{\phi}^{-1}(A\cup B)$ for any Borel sets $A,B \subset V$.
For the second claim in part a) we first consider an indicator function $g = \indicator{A}$ for a Borel set $A \subset V$ with $\mu(\bm{\phi}^{-1}(A)) < \infty$. Then it holds that
\begin{align*}
\int_V g(\y) (\wn \circ \bm{\phi}^{-1})(\d \y) & = \wn(\bm{\phi}^{-1}(A)) = \int_U  \indicator{\bm{\phi}^{-1}(A)}(\x) \,\wn(\d \x) =  \int_U g(\bm{\phi}(\x)) \,\wn(\d \x).
\end{align*}
Using the linearity of the integral, this identity follows immediately for simple functions. Since $\bm{G}_n \to \bm{G}$ in $L^2(\mu \circ \bm{\phi}^{-1};\C^{d\times \ell})$ if, and only if, $\bm{G}_n \circ \bm{\phi} \to  \bm{G} \circ \bm{\phi}$ in $L^2(\mu;\C^{d\times \ell})$,
the general case follows by approximation using \eqref{eq:isometric_property_1}.

Regarding part b) observe that
\begin{align*}
\int_A \bigl| \det (D\bm{\phi})(\bm{\phi}^{-1}(\y)) \bigr|  (\mu \circ \bm{\phi}^{-1})(\d \y) = \alpha \int_{\bm{\phi}^{-1}(A)} \bigl| \det (D\bm{\phi})(\x) \bigr| \,\d\x = \alpha \int_A 1 \,\d\y =  \alpha \lambda^m(A),
\end{align*}
where we used the classical change of variables formula on the second equality. In particular, this ensures the existence of $\wt\wn(A)$ for every $A \in \B(V)$ with $\lambda^m(A)<\infty$. The remaining assertions of part b) follow from part a), the classical change of variables formula, and Lemma \ref{lem:multiplication}.
\end{proof}

The following corollary shows the impact of spherical coordinate transformations.

\begin{corollary}[Spherical coordinates]\label{cor:polar}
Let $\wn$ be a $\C^\ell$-valued Gaussian white noise on $\R^3\times\R$ with  structural measure $\lambda^3 \otimes \lambda^1$, 
let $\bm{\phi}_{\mathrm{sph}} \colon (\R^3\setminus\{0\}) \times \R \to \R^+\times S^2 \times \R$ be the spherical coordinate transformation given by 
\begin{align*}
\bm{\phi}_{\mathrm{sph}}(\kap,\omega) = (\kappa, \btheta ,\omega) = \Bigl(\|\kap\|, \frac{\kap}{\|\kap\|},\,\omega\Bigr),
\end{align*}
and let $U_{S^2}$ be the uniform distribution on the $2$-sphere (i.e., the normalized surface measure). 
Then the mapping $\wn_{\mathrm{sph}}$ that assigns to every 
$A\in \B( \R^{+} \times S^2 \times \R)$ with $(\lambda^1|_{\R^+} \otimes U_{S^2} \otimes \lambda^1)(A)<\infty$ 
the random vector 
\begin{align*}
\wn_{\mathrm{sph}}(A) = \int_A \frac{1}{ \sqrt{4\pi \kappa^2}} \,\bigl(\wn \circ \bm{\phi}_{\mathrm{sph}}^{-1}\bigr)(\d\kappa, \d\btheta,\d\omega)
\end{align*} 
is a $\C^\ell$-valued Gaussian white noise with structural measure $\lambda^1|_{\R^+} \otimes U_{S^2} \otimes \lambda^1$. 
Moreover, for every $\bm{G} \in L^2(\lambda^3 \otimes \lambda^1;\C^{d\times \ell})$ we have that 
\begin{align*}
\int_{\R^3\times \R} \bm{G}(\kap,\omega) \cdot \wn(\d\kap,\d\omega) = \int_{\R^+ \times S^2\times \R} \sqrt{4\pi\kappa^2} \, \bm{G}(\kappa\;\! \btheta,\omega) \cdot \wn_{\mathrm{sph}}(\d\kappa, \d\btheta,\d\omega).
\end{align*}
\end{corollary}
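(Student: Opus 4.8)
The plan is to deduce the corollary by combining the change of variables formula for white noise integrals in Proposition~\ref{prop:pushforward}~a) with the weight-function transformation in Lemma~\ref{lem:multiplication}, using the classical change of variables formula in spherical coordinates to identify the relevant structural measures.

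First I would discard the origin. Since $\{\bm0\}\times\R$ is a $\lambda^3\otimes\lambda^1$-null set, the restriction of $\wn$ to $(\R^3\setminus\{\bm0\})\times\R$ is again a $\C^\ell$-valued Gaussian white noise, now with structural measure $\lambda^3|_{\R^3\setminus\{\bm0\}}\otimes\lambda^1$, and for every $\bm{G}\in L^2(\lambda^3\otimes\lambda^1;\C^{d\times\ell})$ the integral $\int_{\R^3\times\R}\bm{G}(\kap,\omega)\cdot\wn(\d\kap,\d\omega)$ coincides with the integral over $(\R^3\setminus\{\bm0\})\times\R$. On this set the map $\bm{\phi}_{\mathrm{sph}}$ is continuous, hence Borel measurable, so Proposition~\ref{prop:pushforward}~a) applies with $U=(\R^3\setminus\{\bm0\})\times\R$, $V=\R^+\times S^2\times\R$, $\mu=\lambda^3\otimes\lambda^1$ and $\bm{\phi}=\bm{\phi}_{\mathrm{sph}}$; it yields that $\wn\circ\bm{\phi}_{\mathrm{sph}}^{-1}$ is a $\C^\ell$-valued Gaussian white noise on $V$ with structural measure $\mu\circ\bm{\phi}_{\mathrm{sph}}^{-1}$ together with the associated integral transformation formula. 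I would then compute $\mu\circ\bm{\phi}_{\mathrm{sph}}^{-1}$ from the classical spherical change of variables formula: for every non-negative Borel function $h$ on $\R^3$ one has $\int_{\R^3}h(\kap)\,\d\kap=\int_0^\infty\!\int_{S^2}h(\kappa\;\!\btheta)\,\kappa^2\,\sigma_{S^2}(\d\btheta)\,\d\kappa=\int_0^\infty\!\int_{S^2}h(\kappa\;\!\btheta)\,4\pi\kappa^2\,U_{S^2}(\d\btheta)\,\d\kappa$, where $\sigma_{S^2}$ denotes the unnormalized surface measure on $S^2$ and $U_{S^2}=\sigma_{S^2}/\sigma_{S^2}(S^2)=\sigma_{S^2}/(4\pi)$. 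Together with Fubini's theorem in the $\omega$-variable this shows that $\mu\circ\bm{\phi}_{\mathrm{sph}}^{-1}$ is the measure on $V$ with density $(\kappa,\btheta,\omega)\mapsto4\pi\kappa^2$ relative to $\nu:=\lambda^1|_{\R^+}\otimes U_{S^2}\otimes\lambda^1$. In particular, $\bm{G}\in L^2(\lambda^3\otimes\lambda^1;\C^{d\times\ell})$ is equivalent to $(\kappa,\btheta,\omega)\mapsto\sqrt{4\pi\kappa^2}\,\bm{G}(\kappa\;\!\btheta,\omega)$ lying in $L^2(\nu;\C^{d\times\ell})$, which guarantees that all integrals appearing in the statement are well-defined.

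Next I would apply Lemma~\ref{lem:multiplication} to the white noise $\wn\circ\bm{\phi}_{\mathrm{sph}}^{-1}$ with the weight function $\rho(\kappa,\btheta,\omega)=(4\pi\kappa^2)^{-1/2}$. Since $\rho^2\,(\mu\circ\bm{\phi}_{\mathrm{sph}}^{-1})=\nu$, this shows that $\wn_{\mathrm{sph}}=\rho\,(\wn\circ\bm{\phi}_{\mathrm{sph}}^{-1})$ is a $\C^\ell$-valued Gaussian white noise with structural measure $\nu$, which is the first assertion. For the integral identity I would chain the two transformation formulas: Lemma~\ref{lem:multiplication} gives $\int_V\sqrt{4\pi\kappa^2}\,\bm{G}(\kappa\;\!\btheta,\omega)\cdot\wn_{\mathrm{sph}}(\d\kappa,\d\btheta,\d\omega)=\int_V\rho(\kappa,\btheta,\omega)\sqrt{4\pi\kappa^2}\,\bm{G}(\kappa\;\!\btheta,\omega)\cdot(\wn\circ\bm{\phi}_{\mathrm{sph}}^{-1})(\d\kappa,\d\btheta,\d\omega)=\int_V\bm{G}(\kappa\;\!\btheta,\omega)\cdot(\wn\circ\bm{\phi}_{\mathrm{sph}}^{-1})(\d\kappa,\d\btheta,\d\omega)$ because $\rho\sqrt{4\pi\kappa^2}=1$, and Proposition~\ref{prop:pushforward}~a) rewrites the last expression as $\int_{(\R^3\setminus\{\bm0\})\times\R}\bm{G}\bigl(\|\kap\|\,\tfrac{\kap}{\|\kap\|},\,\omega\bigr)\cdot\wn(\d\kap,\d\omega)=\int_{\R^3\times\R}\bm{G}(\kap,\omega)\cdot\wn(\d\kap,\d\omega)$.

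I do not expect a genuine obstacle; the argument is essentially bookkeeping. The two points deserving attention are the handling of the excluded point $\bm0\in\R^3$ — dealt with by the null-set argument above, which also relies on the fact that Proposition~\ref{prop:pushforward}~a) only requires measurability of $\bm{\phi}_{\mathrm{sph}}$ rather than the global diffeomorphism property needed in part b) — and the careful tracking of the normalization constants, in particular that $U_{S^2}$ is the \emph{normalized} surface measure, so that the spherical Jacobian $\kappa^2$ turns into $4\pi\kappa^2$ against $U_{S^2}$; this is precisely the source of the factor $\sqrt{4\pi\kappa^2}$ in the statement.
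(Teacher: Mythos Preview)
Your proposal is correct and follows essentially the same route as the paper: both arguments combine Proposition~\ref{prop:pushforward}~a) with Lemma~\ref{lem:multiplication}, using the classical spherical change of variables to identify the pushforward measure $(\lambda^3\otimes\lambda^1)\circ\bm{\phi}_{\mathrm{sph}}^{-1}$ as having density $4\pi\kappa^2$ relative to $\lambda^1|_{\R^+}\otimes U_{S^2}\otimes\lambda^1$. Your treatment is in fact slightly more explicit about excluding the origin and tracking the normalization constant, but the structure of the argument is the same.
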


\begin{proof}
Observe that $\bm{\phi}_{\mathrm{sph}}^{-1} \colon \R^+ \times S^2 \times \R \to (\R^3\setminus\{0\})\times \R$ is given by $\bm{\phi}_{\mathrm{sph}}^{-1}(\kappa, \btheta,\omega) = (\kappa \;\! \btheta,\omega)$. By elementary analytic calculations using the classical change of variables formula it can be proven that the measure
\begin{align*}
\mu_{\mathrm{sph}}(A) = \int_A \frac{1}{4 \pi \kappa^2} \bigl( (\lambda^3 \otimes \lambda^1) \circ \bm{\phi}_{\mathrm{sph}}^{-1}\bigr)(\d \kappa, \d\btheta,\d \omega),
\end{align*}
for $A\in \B( \R^{+} \times S^2 \times \R)$ with $(\lambda^1|_{\R^+} \otimes U_{S^2} \otimes \lambda^1)(A)<\infty$, satisfies $\mu_{\mathrm{sph}} = \lambda^1|_{\R^+} \otimes U_{S^2} \otimes \lambda^1$. 
Thus, by Lemma \ref{lem:multiplication} and Proposition \ref{prop:pushforward} a) $\wn_{\mathrm{sph}}$ is a $\C^\ell$-valued Gaussian white noise with structural measure $\lambda^1|_{\R^+} \otimes U_{S^2} \otimes \lambda^1$, which satisfies the integral identity
\begin{align*}
\int_{\R^3 \times \R} \bm{G}(\kap,\omega) \cdot \wn(\d\kap,\d\omega) & = \int_{\R^+ \times S^2 \times \R} \bm{G}\bigl( \bm{\phi}_{\mathrm{sph}}^{-1}(\kappa,\btheta,\omega)\bigr) \cdot (\wn \circ \bm{\phi}_{\mathrm{sph}}^{-1})(\d \kappa, \d\btheta, \d\omega)
\\ & = \int_{\R^+ \times S^2\times \R} \sqrt{4\pi \kappa^2} \, \bm{G}(\kappa\;\!  \btheta ,\omega) \cdot  \wn_{\mathrm{sph}}(\d \kappa, \d\btheta,\d\omega).\qedhere
\end{align*}
\end{proof}

In Proposition~\ref{prop:linear_isometry_on_white_noise_integral} below we analyze a class of transformations induced by isometric linear operators. As a preparation, we first specify the action of a bounded linear operator on a white noise from the perspective of generalized random fields. To begin with, consider a $\C$-valued Gaussian white noise $\wnr$ on a Borel set $U \subset \R^m$ with structural measure $\mu \colon \B(U) \to [0,\infty]$. 
Observe that $\wnr$ can be interpreted as a generalized random field acting on test functions $g\in L^2(\mu;\C)$ in the sense that $\wnr$ may be identified with the bounded linear mapping $\wnr \colon L^2(\mu;\C) \to L^2(\Pr;\C)$ defined by 
\begin{equation*}
\wnr(g) := \int_U g(\x)\,\wnr(\d\x).
\end{equation*}
Let $V \subset \R^n$ be a further Borel set, let $\nu \colon \B(V) \to [0,\infty]$ be a $\sigma$-finite measure, and let the transformation  $\mathcal{T}' \colon [L^2(\mu;\C)]' \to [L^2(\nu;\C)]'$ be given as the adjoint of a bounded linear operator $\mathcal{T} \colon L^2(\nu;\C) \to L^2(\mu;\C)$. 
In this situation, the action of  $\mathcal{T}' $ is naturally extended to $\wnr$ by defining the generalized random field  $\mathcal{T}'(\wnr) \colon L^2(\nu;\C) \to L^2(\Pr;\C)$ through
\begin{equation*}
\bigl[\mathcal{T}'(\wnr) \bigr](g) : = \wnr\bigl(\mathcal{T}(g)\bigr) = \int_U [\mathcal{T}(g)](\x)\, \wnr(\d\x).
\end{equation*} 
In order to ensure consistency with the classical theory of generalized functions, we do \emph{not} identify the topologigcal dual spaces $[L^2(\mu;\C)]'$, $ [L^2(\nu;\C)]'$ with the spaces $L^2(\mu;\C)$, $L^2(\nu;\C)$ via the Riesz isomorphism at this point. 
The above definitions are extended to the multidimensional case in a componentwise sense. For instance, if  $\wn=(\wnr_1,\ldots,\wnr_\ell)$ is a $\C^\ell$-valued Gaussian white noise on $U$ with structural measure $\mu$, then 
\begin{equation*}
\mathcal{T}'(\wn) :=  (\mathcal{T}'(\wnr_1) ,\ldots,\mathcal{T}'(\wnr_\ell) ).
\end{equation*}

Using  these concepts, we obtain that the transformed generalized random field $\mathcal{T}'(\wn)$ is again a white noise in the sense of Definition~\ref{def:gaussian_white_noise} provided that the linear operator $\mathcal T$ is isometric up to a multiplicative factor. 

\begin{proposition}[Isometric linear transformations]
\label{prop:linear_isometry_on_white_noise_integral}
Let $U \subset \R^m$ and $V \subset \R^n$ be Borel sets, let $\mu \colon \B(U) \to [0,\infty]$ and $\nu\colon \B(V) \to [0,\infty]$ be $\sigma$-finite measures, let $\wn$ be a $\C^\ell$-valued Gaussian white noise on $U$ with structural measure $\mu$, and let $\mathcal{T} \colon L^2(\nu;\C) \to L^2(\mu;\C)$ be a bounded linear operator such that $\alpha\mathcal{T}$ is isometric for some $\alpha\in\R^+$. 
Then the mapping $\mathcal{T}'(\wn)$ that assigns to every  $A \in \B(V)$ with $\nu(A)<\infty$ the random vector 
\begin{align*}
\bigl[\mathcal{T}'(\wn)\bigr](A) := \bigl[\mathcal{T}'(\wn)\bigr](\indicator{A}) = \int_U \bigl[\mathcal{T}(\indicator{A})\bigr](\x)\,\wn(\d\x)
\end{align*} 
is a $\C^\ell$-valued Gaussian white noise on $V$ with structural measure $\alpha^{-2}\nu$. 
Moreover, for every $\bm{G} \in L^2(\nu;\C^{d\times \ell})$ it holds that
\begin{align*}
\int_V \bm{G}(\y)\cdot \bigl[\mathcal{T}'(\wn)\bigr](\d\y) = \int_U \bigl[\mathcal{T}(\bm{G})\bigr](\x) \cdot \wn(\d\x),
\end{align*}
where $\mathcal{T}'(\bm{G}) = \bigl( \mathcal{T}'(G_{j,k})\bigr)_{j,k}$ for $\bm{G}=(G_{j,k})_{j,k}$.
\end{proposition}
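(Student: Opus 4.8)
The plan is to reduce the assertion to the scalar case $\ell=1$, verify the defining properties of a Gaussian white noise for $\mathcal{T}'(\wnr)$ by a direct second-moment computation, and then obtain the integral identity by approximation with simple functions. Since the components $\wnr_1,\dots,\wnr_\ell$ of $\wn$ are independent and $\mathcal{T}'$ acts componentwise, the independence of $\mathcal{T}'(\wnr_1),\dots,\mathcal{T}'(\wnr_\ell)$ will be automatic, and the matrix-valued integral identity will follow from the scalar one by linearity over the rows of $\bm{G}$. So I would first fix a single $\C$-valued Gaussian white noise $\wnr$ on $U$ with structural measure $\mu$ and, for $A\in\B(V)$ with $\nu(A)<\infty$, put $g_A:=\mathcal{T}(\indicator{A})\in L^2(\mu;\C)$, so that $[\mathcal{T}'(\wnr)](A)=\int_U g_A(\x)\,\wnr(\d\x)$ is well defined as a stochastic integral of a deterministic $L^2$-integrand and is, in particular, a centered complex Gaussian variable with jointly Gaussian real and imaginary parts.

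The heart of the argument is to expand $\int_U g_A\,\wnr(\d\x)$ in terms of the independent real-valued white noises $\Re\,\wnr$ and $\Im\,\wnr$ (each with structural measure $\mu/2$) and the real and imaginary parts of $g_A$, and to evaluate the resulting covariances using the scalar isometry~\eqref{eq:isometric_property_0}. This gives that $\Re[\mathcal{T}'(\wnr)](A)$ and $\Im[\mathcal{T}'(\wnr)](A)$ are independent, each of variance $\tfrac12\|g_A\|_{L^2(\mu)}^2$. Because $\alpha\mathcal{T}$ is isometric on the complex Hilbert space $L^2(\nu;\C)$, the polarization identity shows that it preserves inner products, whence $\|g_A\|_{L^2(\mu)}^2=\alpha^{-2}\nu(A)$ and, more generally, $\langle g_A,g_B\rangle_{L^2(\mu)}=\alpha^{-2}\nu(A\cap B)$ for $B\in\B(V)$ with $\nu(B)<\infty$. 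The first identity pins the structural measure of $\mathcal{T}'(\wnr)$ down to $\alpha^{-2}\nu$; the second, fed back into the same covariance expansion, forces all covariances between $\{\Re,\Im\}[\mathcal{T}'(\wnr)](A)$ and $\{\Re,\Im\}[\mathcal{T}'(\wnr)](B)$ to vanish when $A\cap B=\emptyset$, so that $[\mathcal{T}'(\wnr)](A)$ and $[\mathcal{T}'(\wnr)](B)$ are independent by joint Gaussianity. Finite additivity on disjoint sets is immediate from $\indicator{A\cup B}=\indicator{A}+\indicator{B}$ in $L^2(\nu;\C)$ and the linearity of $\mathcal{T}$ and of the stochastic integral. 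Altogether this shows that $\mathcal{T}'(\wnr)$ is a $\C$-valued Gaussian white noise on $V$ with structural measure $\alpha^{-2}\nu$.

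For the integral identity, by construction $\int_V \indicator{A}(\y)\,[\mathcal{T}'(\wnr)](\d\y)=[\mathcal{T}'(\wnr)](A)=\int_U[\mathcal{T}(\indicator{A})](\x)\,\wnr(\d\x)$, hence by linearity $\int_V g(\y)\,[\mathcal{T}'(\wnr)](\d\y)=\int_U[\mathcal{T}(g)](\x)\,\wnr(\d\x)$ for all simple $g$. For arbitrary $g\in L^2(\nu;\C)$ I would approximate by simple $g_n\to g$; since the $L^2(\alpha^{-2}\nu;\C)$-norm differs from the $L^2(\nu;\C)$-norm only by the factor $\alpha^{-1}$, the isometry~\eqref{eq:isometric_property_0} for $\mathcal{T}'(\wnr)$ yields convergence of the left-hand sides in $L^2(\Pr;\C)$, while boundedness of $\mathcal{T}$ gives $\mathcal{T}(g_n)\to\mathcal{T}(g)$ in $L^2(\mu;\C)$ and hence convergence of the right-hand sides, so the identity survives the limit. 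The matrix-valued identity for $\bm{G}=(G_{j,k})_{j,k}$ then follows by applying the scalar version to each entry $G_{j,k}$ with the corresponding component $\wnr_k$ and summing over $k$ in every row, together with~\eqref{eq:isometric_property_1}. I expect the only genuine subtlety to be the bookkeeping in the scalar step: the test functions $\mathcal{T}(\indicator{A})$ are genuinely complex-valued even though the $\indicator{A}$ are real, so one must track real and imaginary parts carefully to confirm the correct complex Gaussian structure (independent real and imaginary parts with equal variance), and one must invoke polarization to turn the hypothesis ``$\alpha\mathcal{T}$ is isometric'' into inner-product preservation; everything else is routine approximation together with the isometry properties recalled in Subsection~\ref{subsec:appendix_white_noise}.
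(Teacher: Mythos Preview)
Your proposal is correct and follows essentially the same route as the paper's proof: reduce to the scalar case, expand $[\mathcal{T}'(\wnr)](A)$ in terms of $\Re\,\wnr$, $\Im\,\wnr$ and $\Re\,\mathcal{T}(\indicator{A})$, $\Im\,\mathcal{T}(\indicator{A})$, compute the covariances using the isometry of $\alpha\mathcal{T}$, and then pass from simple to general integrands by approximation. Your explicit invocation of polarization to turn the isometry hypothesis into inner-product preservation is exactly what the paper uses (written out slightly differently), and your treatment of the independence structure is, if anything, a bit more explicit than the paper's compressed ``Similarly''.
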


\begin{proof}
We first treat the one-dimensional case. Observe that $\mathcal{T}' (\wnr) \colon \B_0(V) \to L^2(\Pr;\C)$ is obviously centered and Gaussian. The additivity property of $\Re[\mathcal{T}' (\wnr)]$ and $\Im[\mathcal{T}' (\wnr)]$ follows immediately from the linearity of $\mathcal{T}$. Moreover, for any sets $A,B \in \B_0(V)$ we use the fact that
\begin{align*}
\Re\bigl[\mathcal{T}' (\wnr)\bigr](A) & = \int_U \Re \;\!\mathcal{T}(\indicator{A})(\x) \,\Re\;\!\wnr(\d\x) - \int_U \Im\;\! \mathcal{T}(\indicator{A})(\x) \,\Im\;\!\wnr(\d\x),
\\ \Im\bigl[\mathcal{T}' (\wnr)\bigr](B) & = \int_U \Re \;\!\mathcal{T}(\indicator{B})(\x) \,\Im\;\!\wnr(\d\x) + \int_U \Im \;\!\mathcal{T}(\indicator{B})(\x) \,\Re\;\!\wnr(\d\x),
\end{align*}
as well as the isometric property \eqref{eq:isometric_property_1} of the stochastic integral \wrt $\Re\;\!\wnr$ and $\Im\;\!\wnr$, and the isometric property of $\alpha\mathcal{T}$ combined with the independence of $\Re\;\!\wnr$ and $\Im\;\!\wnr$ to obtain
\begin{align*}
& \E \Bigl[\Re\bigl[\mathcal{T}' (\wnr)\bigr](A) \, \Im\bigl[\mathcal{T}' (\wnr)\bigr](B) \Bigr]  
\\ & \qquad = \frac12 \int_U \Re \;\!\mathcal{T}(\indicator{A})(\x) \,\Im \;\!\mathcal{T}(\indicator{B})(\x) - \Im \;\!\mathcal{T}(\indicator{A})(\x) \,\Re \;\!\mathcal{T}(\indicator{B})(\x) \, \mu(\d\x)
\\ & \qquad = - \frac{1}{2\alpha^2}\Im\! \int_U \alpha\mathcal{T}(\indicator{A})(\x) \,\overline{\alpha\mathcal{T}(\indicator{B})}(\x)  \,\mu(\d\x) = -\frac{1}{2\alpha^2}\Im \!\int_V \indicator{A}(\y) \, \overline{\indicator{B}}(\y)  \,\nu(\d\y) = 0.
\end{align*}
Since $\mathcal{T}' (\wnr)$ is a Gaussian process, this implies the independence of $\Re[\mathcal{T}' (\wnr)]$ and $\Im[\mathcal{T}' (\wnr)]$. Similarly
\begin{equation}\label{eq:E_ReA_ReB}
 \E \Bigl[ \bigl| \Re\bigl[\mathcal{T}' (\wnr)\bigr](A) \bigr|^2 \Bigr] = \E \Bigl[ \bigl| \Im\bigl[\mathcal{T}' (\wnr)\bigr](A) \bigr|^2 \Bigr] = \frac{1}{2\alpha^2} \nu(A).
\end{equation}
Using that $\alpha\mathcal{T}$ is linear and isometric, the claimed integral identity can be proven in the usual way by first showing the identity for simple functions and then apply an approximation argument for the general case, where we employ the fact that $f_n \to f$ in $L^2(\nu;\C)$ if, and only if, $\mathcal{T}(f_n) \to \mathcal{T}(f)$ in $L^2(\mu;\C)$.

In the multidimensional case a similar argument shows that $\mathcal{T}' (\wn) \colon \B_0(V) \to L^2(\Pr;\C^{\ell})$ is a centered Gaussian process with independent components and independent real and imaginary parts satisfying the additivity property and \eqref{eq:E_ReA_ReB}. The integral identity follows by a componentwise application of the one-dimensional case.
\end{proof}

\subsection{Differentiability properties}\label{subsec:appendix_differentiability}

Here we briefly recall the concepts of mean-square continuity and mean-square differentiability before presenting results concerning the mean-square differentiability of general second-order random fields and of random fields given in terms of white noise integrals. 

Let $\rf = (\rf(\x))_{\x\in\R^n}$ be an $\R^d$-valued, second-order random field and let $(\bm{e}_j)_{j=1}^n$ denote the standard basis of $\R^n$. The random field $\rf$ is said to be \emph{mean-square continuous} if for all $\x \in \R^n$ it holds that $\lim_{\y \to \bm{0}} \E [\| \rf(\x+\y) - \rf(\x) \|^2] = 0$. 
Moreover, $\rf$ is called \emph{partially mean-square differentiable} in direction $\bm{e}_j$ at the point $\x \in \R^n$ if there exists an $\R^d$-valued, square-integrable random variable $\partial_{x_j} \rf(\x)$ such that
\begin{align}\label{eq:def_ms_diff}
\lim_{h \rightarrow 0}\E \bigg[ \Bigl\|	\frac{1}{h}\bigl(\rf(\x+h\bm{e}_j) - \rf(\x)\bigr) - \partial_{x_j}\rf(\x) \Bigr\|^2\bigg] = 0.
\end{align}
The random variable $\partial_{x_j} \rf(\x)$ is uniquely determined within the space $L^2(\Pr;\R^d)$, i.e., up to $\Pr$-almost sure equality, and is called the \emph{partial mean-square derivative} of $\rf$ in direction $\bm{e}_j$ at the point $\x$.
Furthermore, $\rf$ is said to be \emph{continuously mean-square differentiable} if for all $j \in \{1,\ldots,n\}$  the partial mean-square derivative $\partial_{x_j} \rf(\x)$ exists at every point $\x \in \R^n$ and the random field $\partial_{x_j} \rf=(\partial_{x_j} \rf(\x))_{\x\in\R^n}$ is mean-square continuous.

Partial mean-square differentiability can be characterized in terms of the covariance function.

\begin{lemma}\label{lem:cov_ms-diff}
Let $\rf = (\rf(\x))_{\x\in\R^n}$ be an $\R^d$-valued, centered, second-order random field and let $\bm{C}\colon \R^n \times \R^n \rightarrow \R^{d\times d}$ be the covariance function given by $\bm C(\x,\xalt)=\E[\rf(\x) \otimes \rf(\xalt)]$. 
Then the following assertions hold: 
\begin{enumerate}[label= \emph{\textbf{\alph*)}}]
\item The random field $\rf$ is mean-square differentiable in direction $\bm{e}_j$ at the point $\x \in \R^n$ if, and only if, the limit
\begin{align}\label{eq:specific_derivative_of_cov_function}
\lim_{h,\tilde h \to 0} \frac{1}{h \tilde h} \bigl( \bm{C}(\x + h \bm{e}_j, \x + \tilde h \bm{e}_j) - \bm{C}(\x + h \bm{e}_j, \x) - \bm{C}(\x, \x + \tilde h \bm{e}_j) + \bm{C}(\x, \x) \bigr)
\end{align} 
exists in $\R^{d\times d}$. Moreover, if $\rf$ is mean-square differentiable in direction $\bm{e}_j$ at the point $\x \in \R^n$ and mean-square differentiable in direction $\bm{e}_k$ at the point $\xalt \in \R^n$, then the mixed partial derivative $\partial_{x_j} \partial_{\tilde x_k} \bm{C}(\x,\xalt)$ exists and it holds that
\begin{align}\label{eq:derivative_of_cov_function}
\E\bigl[\partial_{x_j}\rf(\x) \otimes \partial_{\tilde x_k}\rf(\xalt)\bigr] = \partial_{x_j} \partial_{\tilde x_k} \bm{C}(\x,\xalt).
\end{align}
\item 
If the mapping $(h,\tilde h)\mapsto \bm{C}(\x + h \bm{e}_j, \x + \tilde h \bm{e}_j)$ is such that its first order partial derivatives and mixed second-order partial derivatives exist and are continuous in a neighborhood of $(0,0)\in\R\times \R$, 
then $\rf$ is mean-square differentiable in direction $\bm{e}_j$ at the point~$\x$.
\end{enumerate}
\end{lemma}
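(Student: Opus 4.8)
The plan is to reduce both assertions to statements about the matrix‑valued covariance function $\bm C$ and to exploit the completeness of $L^2(\Pr;\R^d)$. All computations are carried out componentwise in $\R^{d\times d}$, so the matrix‑valued setting is harmless, and everywhere $\lim_{h,\tilde h\to 0}$ refers to the joint limit $(h,\tilde h)\to(0,0)$.

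For part a) the key observation is that for $h,\tilde h\neq 0$ the second‑order difference in \eqref{eq:specific_derivative_of_cov_function} coincides with the cross moment
\[
\E\Bigl[\tfrac1h\bigl(\rf(\x+h\bm e_j)-\rf(\x)\bigr)\otimes\tfrac1{\tilde h}\bigl(\rf(\x+\tilde h\bm e_j)-\rf(\x)\bigr)\Bigr],
\]
which follows from the bilinearity of $(X,Y)\mapsto\E[X\otimes Y]$ and the definition $\bm C(\x,\xalt)=\E[\rf(\x)\otimes\rf(\xalt)]$. Writing $D_h:=\tfrac1h(\rf(\x+h\bm e_j)-\rf(\x))\in L^2(\Pr;\R^d)$, the identity $\E[\|D_h-D_{\tilde h}\|^2]=\trace\bigl(\E[D_h\otimes D_h]-\E[D_h\otimes D_{\tilde h}]-\E[D_{\tilde h}\otimes D_h]+\E[D_{\tilde h}\otimes D_{\tilde h}]\bigr)$ shows that if the limit \eqref{eq:specific_derivative_of_cov_function} exists in $\R^{d\times d}$, then $\E[\|D_h-D_{\tilde h}\|^2]\to 0$, so by the Cauchy criterion in $L^2(\Pr;\R^d)$ the limit $\lim_{h\to0}D_h$ exists and $\rf$ is mean‑square differentiable in direction $\bm e_j$ at $\x$. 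Conversely, if this derivative exists then $D_h\to\partial_{x_j}\rf(\x)$ in $L^2$, and the elementary Cauchy--Schwarz bound $\|\E[A_n\otimes B_n]-\E[A\otimes B]\|\le\|A_n-A\|_{L^2}\|B_n\|_{L^2}+\|A\|_{L^2}\|B_n-B\|_{L^2}$ forces $\E[D_h\otimes D_{\tilde h}]$ to converge to $\E[\partial_{x_j}\rf(\x)\otimes\partial_{x_j}\rf(\x)]$, so the limit \eqref{eq:specific_derivative_of_cov_function} exists. The same Cauchy--Schwarz bound, applied to the difference quotient of $\bm C$ at $(\x,\xalt)$ and combined with iterating the limits — first $h\to 0$, which produces $\partial_{x_j}\bm C(\x,\y)=\E[\partial_{x_j}\rf(\x)\otimes\rf(\y)]$ from mean‑square differentiability at $\x$, then $\tilde h\to 0$, using mean‑square differentiability at $\xalt$ — yields the existence of $\partial_{x_j}\partial_{\tilde x_k}\bm C(\x,\xalt)$ together with formula \eqref{eq:derivative_of_cov_function}.

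For part b), by part a) it suffices to verify that the limit \eqref{eq:specific_derivative_of_cov_function} exists. Writing $\bm g(h,\tilde h):=\bm C(\x+h\bm e_j,\x+\tilde h\bm e_j)$, the assumed continuity of the first‑order and mixed second‑order partials of $\bm g$ near $(0,0)$ permits two successive applications of the fundamental theorem of calculus, giving
\[
\bm g(h,\tilde h)-\bm g(h,0)-\bm g(0,\tilde h)+\bm g(0,0)=\int_0^h\!\!\int_0^{\tilde h}(\partial_{\tilde h}\partial_h\bm g)(s,u)\,\d u\,\d s .
\]
Dividing by $h\tilde h$ exhibits the expression in \eqref{eq:specific_derivative_of_cov_function} as the average of the continuous function $\partial_{\tilde h}\partial_h\bm g$ over the rectangle with corners $(0,0)$ and $(h,\tilde h)$, which converges to $(\partial_{\tilde h}\partial_h\bm g)(0,0)$ as $(h,\tilde h)\to(0,0)$. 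By part a), $\rf$ is mean‑square differentiable in direction $\bm e_j$ at $\x$.

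The one step genuinely worth care is the first implication of part a): recognizing the second‑order difference quotient of $\bm C$ as the cross moment $\E[D_h\otimes D_{\tilde h}]$ and extracting $L^2$‑convergence of $(D_h)$ from the convergence of its (trace of the) matrix of cross moments via the Cauchy criterion. The remaining ingredients — the Cauchy--Schwarz moment bound, the iterated FTC representation, and the shrinking‑rectangle averaging — are routine real analysis.
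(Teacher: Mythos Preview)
Your proof is correct and follows essentially the same route as the paper's. The paper identifies the second-order difference quotient of $\bm C$ with $\E[D_h\otimes D_{\tilde h}]$ just as you do, then cites \cite[Lemma~3 in Section~I.1]{GS74} for the Cauchy-criterion step and \cite[Corollary~2.10]{Kru01} for the cross-covariance formula \eqref{eq:derivative_of_cov_function}; you instead spell both of these out directly, which is a modest expository difference rather than a different approach. Your argument for part b) via the iterated fundamental theorem of calculus and averaging over a shrinking rectangle is identical to the paper's.
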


We remark that the existence of the limit \eqref{eq:specific_derivative_of_cov_function} in item a) of Lemma~\ref{lem:cov_ms-diff} ensures the existence of the mixed partial derivative $\partial_{x_j} \partial_{\tilde x_j} \bm{C}(\x,\xalt)|_{\tilde\x=\x}$. 
In order to conclude the former from the latter,  additional regularity assumptions are imposed in item b). 

\begin{proof}[Proof of Lemma~\ref{lem:cov_ms-diff}]
The first assertion of part a) is a consequence of an $\R^{d\times d}$-valued version of \cite[Lemma~3 in Section~I.1]{GS74} and the identity
\begin{equation*}
\begin{aligned}
& \E \Bigl[\frac{1}{h}\bigl(\rf(\x+h\bm{e}_j) - \rf(\x)\bigr) \otimes \frac{1}{\tilde h}\bigl(\rf(\x+\tilde h\bm{e}_j) - \rf(\x)\bigr)\Bigr] 
\\ & \qquad = \frac{1}{h \tilde h} \bigl( \bm{C}(\x + h \bm{e}_j, \x + \tilde h \bm{e}_j) - \bm{C}(\x + h \bm{e}_j, \x) - \bm{C}(\x, \x + \tilde h \bm{e}_j) + \bm{C}(\x, \x) \bigr).
\end{aligned}
\end{equation*}
Assuming that $\rf$ is mean-square differentiable in direction $\bm{e}_j$ at the point $\x \in \R^n$ and mean-square differentiable in direction $\bm{e}_k$ at the point $\xalt \in \R^n$, property \eqref{eq:derivative_of_cov_function} follows immediately from 
the definition of partial mean-square differentiability \eqref{eq:def_ms_diff}; cf., e.g., \cite[Corollary~2.10]{Kru01}. 

Regarding assertion b) observe that
\begin{equation*}\label{eq:proof_lemma_differentiability}
\begin{aligned}
& \frac{1}{h \tilde h} \bigl( \bm{C}(\x + h \bm{e}_j, \x + \tilde h \bm{e}_j) - \bm{C}(\x + h \bm{e}_j, \x) - \bm{C}(\x, \x + \tilde h \bm{e}_j) + \bm{C}(\x, \x) \bigr)
\\ & \qquad = \frac{1}{h \tilde h} \int_0^h \int_0^{\tilde h} \partial_{s\,} \partial_{\tilde s\,}  \bm{C}(\x + s \bm{e}_j, \x + \tilde s \bm{e}_j) \,\d \tilde s \,\d s.
\end{aligned}
\end{equation*}
Due to the continuity of the mixed partial derivative, the term on the right hand side converges to $\partial_{x_j} \partial_{\tilde x_j} \bm{C}(\x,\tilde\x)|_{\tilde \x=\x}$ as $h,\tilde h\to 0$, hence part~b) follows from part a).
\end{proof}

In the special case of a random field given in terms of white noise integrals, partial differentiation and stochastic integration can be interchanged under certain assumptions. The next lemma specifies these assumptions.

\begin{lemma}\label{lem:ms-diff}
Let $U \subset \R^m$ be a Borel set, let $\mu \colon \B(U) \to [0,\infty]$ be a $\sigma$-finite measure, and let $\wn$ be a $\C^\ell$-valued Gaussian white noise on $U$ with structural measure $\mu$. 
Moreover, let the $\R^d$-valued random field $\rf=(\rf(\x))_{\x\in\R^n}$ be such that for every $\x\in\R^n$ it holds $\Pr$-almost surely that
\begin{align*}
\rf(\x) = \Re\! \int_{U} \bm{G}(\x,\y) \cdot \wn(\d \y), 
\end{align*}
where $\bm{G} \colon \R^n \times U \to \C^{d\times \ell}$ is a measurable function satisfying the following conditions for every $x\in\R^n$, $j\in\{1,\ldots,n\}$: 
\begin{enumerate}[label= \emph{\textbf{\alph*)}}]
\item 
It holds that $\bm{G}(\x,\bdot) \in L^2(\mu;\C^{d\times \ell})$.
\item 
For $\mu$-almost all $\y \in U$  the partial derivative $\partial_{x_j}\bm{G}(\x,\y)$ exists and it holds that $\partial_{x_j}\bm{G}(\x,\bdot) \in L^2(\mu;\C^{d\times \ell})$. 
\item 
It holds that 
\begin{align*}
\lim_{h \rightarrow 0} \int_{U} \Bigl\| \frac{1}{h}\bigl(\bm{G}(\x + h\bm{e}_j,\y)-\bm{G}(\x,\y)\bigr) - \partial_{x_j} \bm{G}(\x,\y)\Bigr\|^2 \mu(\d \y) = 0.
\end{align*}
\end{enumerate}
Then for every $\x\in\R^n$, $j \in \{1,...,n\}$ the partial mean-square derivative $\partial_{x_j}\rf(\x)$ exists and satisfies
\begin{align*}
\partial_{x_j} \rf(\x)  =  \Re \!\int_{U} \partial_{x_j} \bm{G}(\x,\y) \cdot \wn(\d \y).
\end{align*}
\end{lemma}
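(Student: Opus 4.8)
The plan is to reduce the asserted mean-square convergence to the $L^2(\mu)$-convergence postulated in item c) by means of the isometry of the white-noise integral. First I would fix $\x\in\R^n$ and $j\in\{1,\ldots,n\}$. By items a) and b) the integrands $\bm{G}(\x,\bdot)$, $\bm{G}(\x+h\bm{e}_j,\bdot)$ and $\partial_{x_j}\bm{G}(\x,\bdot)$ all belong to $L^2(\mu;\C^{d\times\ell})$, so all stochastic integrals occurring below are well-defined elements of $L^2(\Pr;\R^d)$; in particular $\Re\!\int_U \partial_{x_j}\bm{G}(\x,\y)\cdot\wn(\d\y)$ is a square-integrable $\R^d$-valued random variable, the natural candidate for $\partial_{x_j}\rf(\x)$. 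Using linearity of the stochastic integral together with $\R$-linearity of $\Re$, the centered error
\begin{align*}
E_h := \frac1h\bigl(\rf(\x+h\bm{e}_j)-\rf(\x)\bigr) - \Re\!\int_U \partial_{x_j}\bm{G}(\x,\y)\cdot\wn(\d\y)
\end{align*}
can be written as the single white-noise integral $E_h = \Re\!\int_U \bm{H}_h(\y)\cdot\wn(\d\y)$ with $\bm{H}_h(\y) := h^{-1}\bigl(\bm{G}(\x+h\bm{e}_j,\y)-\bm{G}(\x,\y)\bigr) - \partial_{x_j}\bm{G}(\x,\y) \in L^2(\mu;\C^{d\times\ell})$.

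Next I would apply the isometric identity \eqref{eq:isometric_property_real_part} to $\bm{H}_h$, which yields
\begin{align*}
\E\bigl[\|E_h\|^2\bigr] = \frac12\int_U \bigl\|\bm{H}_h(\y)\bigr\|^2\,\mu(\d\y).
\end{align*}
Item c) then forces the right-hand side to converge to $0$ as $h\to 0$, so $h^{-1}(\rf(\x+h\bm{e}_j)-\rf(\x))$ converges in $L^2(\Pr;\R^d)$ to $\Re\!\int_U \partial_{x_j}\bm{G}(\x,\y)\cdot\wn(\d\y)$. This is precisely the defining property \eqref{eq:def_ms_diff} of partial mean-square differentiability, and by uniqueness of the partial mean-square derivative in $L^2(\Pr;\R^d)$ it identifies $\partial_{x_j}\rf(\x)$ with the stated expression.

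I do not expect a genuine obstacle: the argument is a short bookkeeping computation resting entirely on the isometry \eqref{eq:isometric_property_real_part} and hypothesis c). The only point requiring a little care is the decomposition of $E_h$ into a single stochastic integral in the first step, which presupposes that the three individual integrals are simultaneously defined in $L^2(\Pr;\R^d)$ — guaranteed by items a) and b) — before their linear combination may be formed. Running the same computation with $\Im$ in place of $\Re$ would give the analogous statement for imaginary parts, but this is not needed for the lemma as stated.
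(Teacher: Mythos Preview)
Your proposal is correct and follows essentially the same approach as the paper: define the candidate derivative as the white-noise integral of $\partial_{x_j}\bm{G}$, use linearity to write the error as a single stochastic integral, apply the isometric property \eqref{eq:isometric_property_real_part}, and conclude via condition c). The paper's proof is structurally identical, differing only in notation.
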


\begin{proof}
Throughout this proof let $j \in \{1,...,n\}$ be fixed and let the random field $\v=(\v(\x))_{\x\in\R^n}$ be such that for every $\x\in\R^n$ it holds $\Pr$-almost surely that $\v(\x) = \Re\! \int_U \partial_{x_j}\bm{G}(\x,\y) \cdot \wn(\d\y)$. 
Note that conditions a) and b) ensure that the white noise integrals defining the random fields $\rf$ and $\v$ exist. 
Furthermore, observe that the linearity of the stochastic integral implies for every $\x\in\R^n$, $h\in\R^+$ that
\begin{align*}
\frac{1}{h} \bigl( \rf(\x + h \bm{e}_j) - \rf(\x) \bigr) - \v(\x) = \Re \!\int_U \Bigl( \frac{1}{h} \bigl(\bm{G}(\x + h\bm{e}_j,\y)-\bm{G}(\x,\y)\bigr) - \partial_{x_j} \bm{G}(\x,\y)\Bigr) \cdot \wn(\d\y).
\end{align*}
This and the isometric property \eqref{eq:isometric_property_real_part} of the white noise integral yield 
\begin{align*}
\E \biggl[ \Bigl\|\frac{1}{h} \;\!\! \bigl( \rf(\x + h \bm{e}_j) - \rf(\x) \bigr) - \v(\x)\Bigr\|^2\biggr]	= \frac{1}{2} \;\!\! \int_{U} \;\!\! \Big\| \frac{1}{h} \bigl(\bm{G}(\x + h\bm{e}_j,\y)-\bm{G}(\x,\y)\bigr) - \partial_{x_j} \bm{G}(\x,\y)\Big\|^2 \!\mu(\d \y).
\end{align*}
Condition c) and the definition of partial mean-square differentiability \eqref{eq:def_ms_diff} thus imply that the partial mean-square derivative $\partial_{x_j} \rf(\x)$ exists and equals $\v(x)$ in $L^2(\Pr;\R^d)$. 
\end{proof}


\section{Ergodicity of Gaussian random vector fields}
\label{sec:ergodicity}

In this section we establish in Proposition \ref{prop:mean_ergo_thm} and Corollary \ref{cor:ergo} suitable variants of Wiener's multiparameter mean ergodic theorem \cite{Wie39} which are tailor-made for the homogeneous and inhomogeneous turbulence models considered in this article and lay the ground for the ergodicity results presented in Proposition \ref{prop:ergodicity_hom} and Theorem \ref{thm:ergodicity_inhom}. Related results can be found, e.g., in \cite{BE72, Bec81, Tay18}.

\begin{lemma}[Abstract mean ergodic theorem]\label{lem:abstract_mean_ergo_thm}
Let $H$ be a (real or complex) Hilbert space, let $(\U(\z))_{\z\in\R^n} \subset L(H)$ be a strongly continuous group of unitary operators, 
set $K = \{ \varphi \in H \colon \U(\z)\varphi = \varphi$ for all $\z \}$, and let  $P_K \in L(H)$ denote the orthogonal projection in $H$ onto the closed subspace $K$. Let $\varphi_j\in H$, $j\in\N\cup\{\infty\}$, be such that $\lim_{j\to\infty} \varphi_j = \varphi_\infty$ and let $B_j \subset \R^n$, $j \in \N$, be Borel sets which satisfy that $0<\lambda^n(B_j)<\infty$ and
\begin{equation}\label{eq:assump_ergo_abstract}
\forall\, \z \in \R^n \colon \lim_{j\to\infty} \frac{\lambda^n\bigl( B_j \triangle (\z + B_j)
\bigr)}{\lambda^n(B_j)} = 0,
\end{equation}
where  $B_j \triangle (\z + B_j)$ denotes the symmetric difference of the sets $B_j$ and $\z+B_j=\{\z+\bm{b}\colon \bm{b}\in B_j\}$. Then it holds that
\begin{equation*}
\lim_{j\to\infty} \mint{-}_{B_j} \U(\z)\varphi_j \,\d\z = P_K \varphi_\infty.
\end{equation*}
\end{lemma}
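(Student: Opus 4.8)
The plan is to reduce the statement to the classical (von Neumann) mean ergodic theorem for one-parameter unitary groups by a standard splitting argument, handling the $j$-dependence of the vectors $\varphi_j$ separately from the averaging. First I would split the average as
\begin{equation*}
\mint{-}_{B_j} \U(\z)\varphi_j \,\d\z = \mint{-}_{B_j} \U(\z)\varphi_\infty \,\d\z + \mint{-}_{B_j} \U(\z)\bigl(\varphi_j - \varphi_\infty\bigr)\,\d\z.
\end{equation*}
The second term is bounded in norm by $\|\varphi_j - \varphi_\infty\|$, since each $\U(\z)$ is unitary and $\mint{-}_{B_j}$ is an average; hence it vanishes as $j\to\infty$ by hypothesis. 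So it remains to show $\mint{-}_{B_j}\U(\z)\varphi_\infty\,\d\z \to P_K\varphi_\infty$, i.e., the assertion for the \emph{fixed} vector $\varphi_\infty$.

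For the fixed-vector statement I would use the orthogonal decomposition $H = K \oplus K^\perp$ and write $\varphi_\infty = P_K\varphi_\infty + \psi$ with $\psi \in K^\perp$. On $K$ the group acts trivially, so $\mint{-}_{B_j}\U(\z)(P_K\varphi_\infty)\,\d\z = P_K\varphi_\infty$ for every $j$. It therefore suffices to prove $\mint{-}_{B_j}\U(\z)\psi\,\d\z \to 0$ for $\psi \in K^\perp$. The key structural input here is the identification of $K^\perp$: by the multiparameter analogue of the standard argument, $K^\perp$ is the closed linear span of $\{\U(\bm{h})\chi - \chi : \bm{h}\in\R^n,\ \chi\in H\}$. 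Indeed, if $\varphi$ is orthogonal to all such differences then $\langle \U(\bm{h})\chi - \chi,\varphi\rangle = 0$ for all $\chi$, i.e. $\langle \chi, \U(-\bm{h})\varphi - \varphi\rangle = 0$ for all $\chi$, whence $\U(\bm{h})\varphi = \varphi$ for all $\bm{h}$, so $\varphi \in K$; this gives $(\,\overline{\mathrm{span}}\{\U(\bm{h})\chi-\chi\}\,)^\perp = K$ and hence $K^\perp = \overline{\mathrm{span}}\{\U(\bm{h})\chi-\chi\}$.

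By linearity and the uniform bound $\|\mint{-}_{B_j}\U(\z)\,\bdot\,\d\z\|_{L(H)}\le 1$, it is enough to verify $\mint{-}_{B_j}\U(\z)\psi\,\d\z\to 0$ on the dense set of $\psi$ of the form $\psi = \U(\bm{h})\chi - \chi$. For such $\psi$, using the group property $\U(\z)\U(\bm{h}) = \U(\z+\bm{h})$ and a change of variables in the average, one computes
\begin{equation*}
\mint{-}_{B_j}\U(\z)\bigl(\U(\bm{h})\chi-\chi\bigr)\,\d\z = \frac{1}{\lambda^n(B_j)}\int_{(\bm{h}+B_j)\triangle B_j}\pm\,\U(\z)\chi\,\d\z,
\end{equation*}
so its norm is at most $\lambda^n\bigl(B_j\triangle(\bm{h}+B_j)\bigr)/\lambda^n(B_j)\cdot\|\chi\|$, which tends to $0$ by the Følner-type condition \eqref{eq:assump_ergo_abstract}. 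This completes the fixed-vector case and hence the lemma. Throughout one needs the elementary facts that the Bochner integral $\mint{-}_{B_j}\U(\z)\varphi\,\d\z$ is well defined (strong continuity of $\z\mapsto\U(\z)\varphi$ gives measurability, and the integrand is bounded on the finite-measure set $B_j$) and commutes with bounded operators and with taking inner products.

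**Main obstacle.** The only genuinely substantive point is the identification of $K^\perp$ as the closed span of the coboundaries $\U(\bm{h})\chi-\chi$; everything else is a routine $\varepsilon$-argument using unitarity, the Følner condition, and density. The $j$-dependence of $\varphi_j$ is a triviality once the splitting above is in place, so it poses no real difficulty.
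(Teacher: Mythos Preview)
Your proof is correct and follows essentially the same approach as the paper's. Both argue via the orthogonal decomposition $H = K \oplus \overline{\mathrm{span}}\{\U(\bm{h})\chi-\chi\}$, handle the $\varphi_j$-dependence by the unitary bound $\bigl\|\mint{-}_{B_j}\U(\z)(\varphi_j-\varphi_\infty)\,\d\z\bigr\|\le\|\varphi_j-\varphi_\infty\|$, and then kill the coboundary part on a dense set using the F{\o}lner condition~\eqref{eq:assump_ergo_abstract} together with a density argument. Your opening line about reducing to the one-parameter von Neumann theorem is slightly misleading, since what you actually do (and what the paper does) is prove the multiparameter version directly rather than reduce to the one-parameter case; but this is only a wording issue, not a mathematical one.
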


\begin{proof}
Throughout this proof let $R$ be the closure in $H$ of the linear span of $\bigcup_{\z\in\R^n} \text{Range}(I-\U(\z))$. Using that $(\U(\z))_{\z \in \R^n}$ is a group of unitary operators, it is straightforward to show that $K = R^\bot$, which implies the orthogonal decomposition $H = K \oplus {R}$. This yields 
\begin{equation}\label{eq:ergo_thm_sum}
 \mint{-}_{B_j} \U(\z)\varphi_j \,\d\z = \mint{-}_{B_j} \U(\z) P_K \varphi_\infty \,\d\z + \mint{-}_{B_j}  \U(\z) P_{{R}} \varphi_\infty \,\d\z + \mint{-}_{B_j} \U(\z)(\varphi_j - \varphi_\infty) \,\d\z,
\end{equation}
where $P_R \in L(H)$ denotes the orthogonal projection in $H$ onto $R$. 
By definition of $K$ the first summand in \eqref{eq:ergo_thm_sum} equals $\mint{-}_{B_j} P_K \varphi_\infty \,\d\z = P_K \varphi_\infty$. 
Moreover, the fact that the operators $\U(\z)$ are unitary implies that the third summand in \eqref{eq:ergo_thm_sum} satisfies the estimate $\| \mint{-}_{B_j} \U(\z)(\varphi_j - \varphi_\infty) \,\d\z \|_H \leq \| \varphi_j - \varphi_\infty \|_H$ and thus converges to zero 
as $j \to \infty$. 
It remains to show that the second summand in \eqref{eq:ergo_thm_sum} converges to zero as $j \to \infty$. 
To see this, observe that for any $\z_0 \in \R^n$ and any $\varphi \in \text{Range}(I-\U(\z_0))$ there exists a $\psi \in H$ such that $\varphi = (I - \U(\z_0))\psi$, leading to
\begin{equation*}
\begin{aligned}
\Bigl\| \mint{-}_{B_j} \! \U(\z) \varphi \,\d \z \Bigr\|_H 
&= 
\Bigl\| \mint{-}_{B_j} \! \U(\z) \psi \,\d \z - \mint{-}_{B_j} \! \U(\z + \z_0) \psi \,\d \z \Bigr\|_H  
\leq
\frac{1}{\lambda^n(B_j)} \int_{B_j \triangle (\z_0 + B_j)} \!\|\U(\z)  \psi\|_H \, \d \z.
\end{aligned}
\end{equation*}
The fact that the operators $\U(\z)$ are unitary and \eqref{eq:assump_ergo_abstract} hence ensure that $ \mint{-}_{B_j} \U(\z) \varphi \,\d \z$ converges to zero as $j\to\infty$. 
This and a density argument in the subspace $R$ yield the convergence of the second summand in \eqref{eq:ergo_thm_sum}. 
\end{proof}

In the next lemma we prepare the application of the abstract ergodic result from  Lemma~\ref{lem:abstract_mean_ergo_thm} in the context of random fields in Proposition~\ref{prop:mean_ergo_thm} below. 
We point out the notational difference between the random field $\w$ and its possible realizations  $\bom$ in the space $\mathcal O$ appearing in the formulation of the considered setting. 

\begin{lemma}\label{lem:ergo}
Let $\w = (\w(\x,t))_{(\x,t) \in \R^3 \times \R}$ be an $\R^{d}$-valued, centered, mean-square continuous, homogeneous Gaussian random field  
and let $\bm{C} \colon \R^3 \times \R \to \R^{d\times d}$ be the covariance function of $\w$ given by $\bm{C}(\y,s) = \E[\w(\x+\y,t+s) \otimes \w(\x,t)]$. 
Let $\mathcal{O}$ be the set of all functions from $\R^3 \times \R$ to $\R^{d}$, endowed with the $\sigma$-algebra $\mathcal{A}$ generated by the point evaluation mappings $\mathcal O\ni\bom\mapsto\bom(\x,t)\in\R^{d}$, $(\x,t)\in \R^3\times\R$, 
and let $\nu$ be the distribution of $\w$ on the measurable space $(\mathcal{O},\mathcal{A})$, where $\w$ is interpreted as an $\mathcal O$-valued random variable. 
Moreover, let the translation operators $\tau_{(\y,s)} \colon \mathcal{O} \to \mathcal{O}$, $(\y,s) \in \R^3 \times \R$, be defined by $(\tau_{(\y,s)}\bom)(\x,t) = \bom(\x+\y,t+s)$, $\bom \in \mathcal{O}$,
and let $H = L^2(\nu;\R)$. 
Then the following assertions hold: 
\begin{enumerate}[label= \emph{\textbf{\alph*)}}]
\item If we define $\U(\y,s) \colon H \to H$, $(\y,s) \in \R^3 \times \R$, by $\U(\y,s)\varphi = \varphi \circ \tau_{(\y,s)}$, then the family $(\U(\y,s))_{(\y,s) \in \R^3 \times \R}$ constitutes a strongly continuous group of unitary operators on $H$, and in this case the condition
\begin{equation*}
\lim_{\| (\y,s) \| \to \infty} \| \bm{C}(\y,s) \| = 0
\end{equation*}
implies that the space $K = \{ \varphi \in H \colon \U(\y,s) \varphi = \varphi$ for all $(\y,s) \}$ consists precisely of those functions $\varphi$ that are constant $\nu$-almost everywhere.
\item If we define $\U(\y) \colon H \to H$, $\y \in \R^3$, by $\U(\y)\varphi = \varphi \circ \tau_{(\y,0)}$, then the family $(\U(\y))_{\y\in \R^3}$ constitutes a strongly continuous group of unitary operators on $H$, and in this case the condition
\begin{equation*}
\forall\, s \in\R \colon \lim_{\| \y\| \to \infty} \| \bm{C}(\y,s) \| = 0
\end{equation*}
implies that the space $K = \{ \varphi \in H \colon \U(\y) \varphi = \varphi$ for all $\y \}$ consists precisely of those functions $\varphi$ that are constant $\nu$-almost everywhere.
\item If we define $\U(s) \colon H \to H$, $s \in \R$, by $\U(s)\varphi = \varphi \circ \tau_{(\bm{0},s)}$, then the family $(\U(s))_{s \in \R}$ constitutes a strongly continuous group of unitary operators on $H$, and in this case the condition
\begin{equation*}
\forall \, \y \in \R^3 \colon \lim_{|s| \to \infty} \| \bm{C}(\y,s) \| = 0
\end{equation*}
implies that the space $K = \{ \varphi \in H \colon \U(s) \varphi = \varphi$ for all $s \}$ consists precisely of those functions $\varphi$ that are constant $\nu$-almost everywhere.
\end{enumerate}
\end{lemma}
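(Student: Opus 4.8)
The plan is to treat the three parts in parallel, since they differ only in which translation group acts; write $G$ for the acting group ($\R^3\times\R$ in a), $\R^3$ in b), $\R$ in c)) and, abusing notation, $\tau_g$ for the corresponding translation on $\mathcal O$ and $\U(g)\colon\varphi\mapsto\varphi\circ\tau_g$ for the induced operator on $H=L^2(\nu;\R)$. First I would dispose of the structural claims. Homogeneity of $\w$ means that $\tau_g\circ\w$ has the same law as $\w$, so each $\tau_g$ preserves $\nu$ and $\U(g)$ is a linear isometry of $H$ with inverse $\U(-g)$, hence unitary; the relation $\tau_g\circ\tau_{g'}=\tau_{g+g'}$ furnishes the group law. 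For strong continuity, $\|\U(g)\|=1$ uniformly, so it suffices to check $\|\U(g)\varphi-\varphi\|_H\to0$ as $g\to0$ on the dense subspace of bounded continuous \emph{cylinder} functions $\varphi(\bom)=h\bigl(\bom(\x_1,t_1),\dots,\bom(\x_k,t_k)\bigr)$; for those, mean-square continuity of $\w$ gives $\w(\x_i+\y,t_i+s)\to\w(\x_i,t_i)$ in probability as $g\to0$, hence $\varphi\circ\tau_g\to\varphi$ in probability and, $h$ being bounded, in $L^2(\Pr)$, i.e.\ in $H$.

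The core of the argument is the identification of $K$, which is exactly ergodicity of the measure-preserving $G$-action on $(\mathcal O,\nu)$. I would deduce it from the stronger \emph{mixing} property: for all $F,\Psi\in H$ one has $\langle\U(g)F,\Psi\rangle_H\to\bigl(\int F\,\d\nu\bigr)\bigl(\int\Psi\,\d\nu\bigr)$ as $\|g\|\to\infty$ in $G$. Granting this, any $\varphi\in K$ with $\int\varphi\,\d\nu=0$ satisfies $\|\varphi\|_H^2=\langle\U(g)\varphi,\varphi\rangle_H\to0$, so $\varphi=0$; subtracting its mean, an arbitrary $\varphi\in K$ is therefore $\nu$-a.e.\ constant, and conversely constants lie in $K$. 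To prove the mixing property, again by $\|\U(g)\|=1$ it is enough to take $F,\Psi$ bounded continuous cylinder functions, say $F$ a function of the evaluations at $(\x_1,t_1),\dots,(\x_p,t_p)$ and $\Psi$ of those at $(\x_1',t_1'),\dots,(\x_q',t_q')$. The random vector consisting of $\w(\x_i+\y,t_i+s)$, $i\le p$, together with $\w(\x_j',t_j')$, $j\le q$, is jointly Gaussian; by homogeneity its two marginal blocks have laws independent of $g=(\y,s)$, while the cross-covariances equal $\bm C(\x_i+\y-\x_j',\,t_i+s-t_j')$ and tend to $\bm 0$ as $\|g\|\to\infty$ by the assumed decay of $\bm C$. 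Hence the joint law converges to the product of the two marginals, and since $F\circ\tau_g$ and $\Psi$ are bounded continuous functions of the respective blocks, $\langle\U(g)F,\Psi\rangle_H=\E[(F\circ\tau_g)\,\Psi]\to\E[F]\,\E[\Psi]=\bigl(\int F\,\d\nu\bigr)\bigl(\int\Psi\,\d\nu\bigr)$; this is the classical mixing property of a stationary Gaussian field whose covariance vanishes at infinity, cf.~\cite{GS74}.

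Finally I would spell out the three cases. In a), $\|g\|\to\infty$ means $\|(\y,s)\|\to\infty$ and the hypothesis $\|\bm C(\y,s)\|\to0$ makes all cross-covariances $\bm C(\x_i+\y-\x_j',t_i+s-t_j')$ vanish, so the argument above applies verbatim. In b) one fixes $s=0$ and lets $\|\y\|\to\infty$; the cross-covariances are $\bm C(\x_i+\y-\x_j',\,t_i-t_j')$, whose first argument diverges in norm while the finitely many time-arguments $t_i-t_j'$ stay fixed, which is precisely the situation covered by the hypothesis that $\|\bm C(\y,s)\|\to0$ as $\|\y\|\to\infty$ for every fixed $s$. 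Case c) is symmetric: fix $\y=\bm0$, let $|s|\to\infty$, and use that $\|\bm C(\y,s)\|\to0$ as $|s|\to\infty$ for every fixed $\y$. I expect the routine but genuinely technical point to be the approximation bookkeeping underlying Steps 1 and 2 — verifying that bounded continuous cylinder functions are dense in $L^2(\nu;\R)$ (via a monotone-class argument, using that the finite-dimensional cylinder sets form a $\pi$-system generating $\mathcal A$) and that convergence of the covariance matrices of the two Gaussian blocks yields convergence of their joint law and hence of the expectations of bounded continuous functionals of them.
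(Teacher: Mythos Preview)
Your proof is correct and in fact establishes a slightly stronger conclusion than the paper needs: you show that the dynamical system $(\mathcal O,\nu,(\tau_g)_{g\in G})$ is \emph{mixing}, i.e.\ $\langle\U(g)F,\Psi\rangle_H\to(\int F\,\d\nu)(\int\Psi\,\d\nu)$ as $\|g\|\to\infty$, and then read off ergodicity (triviality of $K$) as an immediate consequence. The paper takes a different path: rather than mixing, it verifies only the weaker Ces\`aro-averaged condition
\[
\lim_{R\to\infty}\mint{-}_{\|\y\|\le R}\langle\varphi\circ\tau_{(\y,0)},\psi\rangle_H\,\d\y=\Bigl(\int\varphi\,\d\nu\Bigr)\Bigl(\int\psi\,\d\nu\Bigr),
\]
and deduces triviality of $K$ from this via the abstract mean ergodic theorem (Lemma~\ref{lem:abstract_mean_ergo_thm}). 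To check the averaged condition on cylinder functions the paper writes out the Gaussian characteristic functions explicitly by Fourier inversion and reduces everything to $\lim_{R\to\infty}\mint{-}_{\|\y\|\le R}\|\bm C(\y,s)\|\,\d\y=0$, which follows from the pointwise decay hypothesis by dominated convergence.

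Your route is more conceptual and avoids the explicit characteristic-function bookkeeping; the key fact you invoke---that a sequence of centered Gaussian vectors whose covariance matrices converge also converges in law---is standard and makes the asymptotic independence transparent. The paper's route, in turn, dovetails with the abstract framework built in Appendix~\ref{sec:ergodicity}: Lemma~\ref{lem:abstract_mean_ergo_thm} is stated in a generality that is reused downstream in Proposition~\ref{prop:mean_ergo_thm}, so proving only the averaged condition keeps the argument aligned with that machinery. For the strong-continuity part both proofs proceed identically (density of cylinder functions plus mean-square continuity of $\w$); the paper uses $C_c^\infty$ cylinder functions and a Lipschitz estimate, while you use bounded continuous ones and dominated convergence---either choice works.
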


\begin{proof}
We first consider item a) and verify that $(U(\y,s))_{(\y,s) \in \R^3 \times \R}$ is a strongly continuous group of unitary operators, which implies the corresponding assertions in items b) and c). 
Clearly, the group property is a direct consequence of the definition of the operators $U(\y,s)$ via translation. Moreover, the fact that $\w$ is a homogeneous random field and the uniqueness theorem for measures ensure that the transformations $\tau_{(\y,s)} \colon \mathcal{O} \to \mathcal{O}$ are measure preserving, i.e., $\nu \circ \tau_{(\y,s)}^{-1} = \nu$, hence the operators $U(\y,s)$ are unitary. 
Next observe that an application of the monotone class theorem and standard approximation arguments yield that a dense subspace of $L^2(\nu;\R)$ is given by the class of functions of the form
\begin{equation}\label{eq:ergo_lemma_density}
\varphi(\bom) = g\bigl( \bom(\x_1,t_1), \ldots, \bom(\x_\ell, t_\ell) \bigr), \quad \bom \in \mathcal{O},
\end{equation}
where $\ell \in \N$, $(\x_1,t_1),\ldots,(\x_\ell,t_\ell) \in \R^3 \times \R$, and $g \in C_c^\infty(\R^{{d}\ell},\R)$.
Here and below we identify $\R^{d\ell}$ with $(\R^d)^\ell$. In order to establish the strong continuity of the group of unitary operators $(U(\y,s))_{(\y,s) \in \R^3 \times \R}$, it is thus sufficient to verify the continuity of $(\y,s) \mapsto \varphi \circ \tau_{(\y,s)}$ from $\R^3 \times \R$ to $L^2(\nu;\R)$ for functions $\varphi$ of type \eqref{eq:ergo_lemma_density}. The latter follows from the mean-square continuity of $\w$ and the estimate
\begin{equation}\label{eq:est_B2}
\begin{aligned}
\| \varphi \circ \tau_{(\y,s)} - \varphi \|_{L^2(\nu;\R)} & = \| \varphi(\tau_{(\y,s)}\w) - \varphi(\w) \|_{L^2(\Pr;\R)} 
\\ & \leq C \sum_{j=1}^\ell \| \w(\x_j + \y, t_j + s) - \w(\x_j,t_j) \|_{L^2(\Pr;\R^d)},
\end{aligned}
\end{equation}
where we use the notation $\varphi(\tau_{(\y,s)}\w) := \varphi \circ \tau_{(\y,s)} \circ \w$ and $\varphi(\w) := \varphi \circ \w$ based on interpreting $\w$ as a mapping from $\Omega$ to $\mathcal{O}$, and $C$ is the Lipschitz constant associated with the function $g$ in \eqref{eq:ergo_lemma_density}. 

Next we focus on item b) and prove the statement involving the covariance condition and the subspace $K = \{ \varphi \in H \colon U(\y) \varphi = \varphi$ for all $\y \}$ of $H$, where $U(\y)\varphi = \varphi \circ\tau_{(\y,0)}$. The corresponding assertions in items a) and c) are proven analogously. To begin with, observe that Lemma \ref{lem:abstract_mean_ergo_thm} implies that $K$ consists only of $\nu$-almost everywhere constant functions provided that
\begin{align}\label{eq:ergo_lem_sufficient_condition}
\forall\,\varphi,\psi \in L^2(\nu;\R)\colon\,
\lim_{R\to \infty} \mint{-}_{\| \y \| \leq R} \bigl\langle \varphi \circ \tau_{(\y,0)}, \psi \bigr\rangle_{L^2(\nu;\R)} \d \y = \Bigl( \int_\mathcal{O} \varphi \, \d\nu \Bigr) \Bigl( \int_\mathcal{O} \psi \,\d\nu \Bigr).
\end{align}
As the dependence of the integrals in \eqref{eq:ergo_lem_sufficient_condition} on $\varphi$ and $\psi$ is continuous, it suffices to verify the identity in \eqref{eq:ergo_lem_sufficient_condition} for all $\varphi, \psi$ in some dense subspace of $L^2(\nu;\R)$, such as the class of functions of the form \eqref{eq:ergo_lemma_density}. Considering $\varphi(\bom) = g( \bom(\x_1,t_1), \ldots, \bom(\x_\ell, t_\ell) )$ and $\psi(\bom) = h( \bom(\x_1,t_1), \ldots, \bom(\x_\ell, t_\ell) )$ with $g,h \in C_c^\infty(\R^{d\ell},\R)$, the Fourier inversion formula implies the representation
\begin{align*}
( \varphi \circ \tau_{(\y,0)} )(\bom) = \int_{\R^{d\ell}} \exp \Bigl\{ i \sum_{j = 1}^\ell \bom(\x_j + \y, t_j) \cdot \bxi_j \Bigr\} (\mathcal{F}g)(\bxi) \,\d\bxi
\end{align*}
and an analogous identity for $\psi$. Using Fubini's theorem and the dominated convergence theorem, we thus obtain that \eqref{eq:ergo_lem_sufficient_condition} holds if
for all $\ell \in \N$, $(\x_1,t_1), \ldots, (\x_\ell, t_\ell) \in \R^3 \times \R$, $\bxi = (\bxi_1,\ldots,\bxi_\ell) \in \R^{d\ell}$, and $\bm{\eta} = (\bm{\eta}_1,\ldots,\bm{\eta}_\ell) \in \R^{d\ell}$ we have
\begin{equation}\label{eq:ergo_lem_sufficient_after_Fubini}
\begin{aligned}
\lim_{R\to \infty} \mint{-}_{\| \y \| \leq R} &\E \Bigl[ \exp \Bigl\{ i \sum_{j = 1}^\ell \w(\x_j + \y, t_j) \cdot \bxi_j \Bigr\} \, \exp \Bigl\{ i \sum_{k = 1}^\ell \w(\x_k, t_k) \cdot \bm{\eta}_k \Bigr\}  \Bigr]\, \d\y
\\ &\; = \E \Bigl[ \exp \Bigl\{ i \sum_{j = 1}^\ell \w(\x_j, t_j) \cdot \bxi_j \Bigr\} \Bigr] \; \E \Bigl[ \exp \Bigl\{ i \sum_{k = 1}^\ell \w(\x_k, t_k) \cdot \bm{\eta}_k \Bigr\}  \Bigr].
\end{aligned}
\end{equation}
Furthermore, note that the specific structure of characteristic functions of Gaussian random vectors ensures that \eqref{eq:ergo_lem_sufficient_after_Fubini} is implied by the condition that
for all $\ell \in \N$, $(\x_1,t_1), \ldots, (\x_\ell, t_\ell) \in \R^3 \times \R$, $\bxi = (\bxi_1,\ldots,\bxi_\ell) \in \R^{d\ell}$, and $\bm{\eta} = (\bm{\eta}_1,\ldots,\bm{\eta}_\ell) \in \R^{d\ell}$ it holds that
\begin{equation}\label{eq:ergo_lem_convergence}
\lim_{R \to \infty} \mint{-}_{\| \y \| \leq R} \exp \Bigl\{ -\sum_{j,k = 1}^\ell \bxi_j \cdot \bm{C}(\x_j + \y - \x_k, t_j - t_k) \cdot \bm{\eta}_k \Bigr\} \,\d \y = 1.
\end{equation}
Finally, observe that the estimates $|e^r - 1| \leq |r|e^C$ and $\| \bm{C}(\y,s) \| \leq \sqrt{3} \| \bm{C}(\bm{0},0)\|$ for $C \in \R_0^+$, $r \in [-C,C]$, and $(\y,s) \in \R^3 \times \R$ establish that \eqref{eq:ergo_lem_convergence} holds provided that
\begin{equation*}
\forall\,s \in \R\colon\,
\lim_{R \to \infty} \mint{-}_{\| \y \| \leq R} \| \bm{C}(\y,s) \| \,\d \y = 0.
\end{equation*}
This, the change of variables formula, and the dominated convergence theorem lead to the covariance condition in item b).
\end{proof}

\begin{proposition}[Mean ergodic theorem]\label{prop:mean_ergo_thm}
Assume the setting of Lemma~\ref{lem:ergo} and suppose that the covariance function $\bm{C}$ of $\w$ admits a spectral representation of the form 
\begin{equation}\label{eq:cov_mean_ergo}
\bm{C}(\y,s) = \Re\! \int_{\R^3 \times \R} \exp\Bigl\{i \bigl( \kap \cdot \y + \omega s \bigr) \Bigr\} \bm{F}(\kap, \omega) \,\d (\kap,\omega),
\end{equation}
$(\y,s) \in \R^3 \times \R$, for some integrable function $\bm{F} \colon \R^3 \times \R \to \C^{d\times d}$. 
In addition, let $\varphi \in L^2(\nu;\R)$, let $(\mathcal{R}_j)_{j \in \N}$, $(\mathcal{R}'_j)_{j \in \N} \subset \R_0^+$ and $L,L' \in [0,\infty]$ be such that $\lim_{j\to\infty} \mathcal{R}_j = L$ and $\lim_{j\to\infty} \mathcal{R}'_j = L'$, and assume that at least one of the limit values $L, L'$ equals infinity. 
Moreover, let $((\x_j,t_j))_{j\in \N} \subset \R^3 \times \R$ be an arbitrary sequence. 
Then, for every $(\y,s) \in \R^3 \times \R$ the random variable $\varphi(\tau_{(\y,s)} \w) := \varphi \circ \tau_{(\y,s)} \circ \w$ is an element of $L^2(\Pr;\R)$, the mapping $\R^3 \times \R \ni (\y,s) \mapsto \varphi(\tau_{(\y,s)} \w) \in L^2(\Pr;\R)$ is continuous, and we have that
\begin{equation}\label{eq:limit_mean_ergo}
\lim_{j\to \infty} \mint{-}_{B_{\mathcal{R}_j}^{(1)}(0)} \mint{-}_{B_{\mathcal{R}'_j}^{(3)}(\bm{0})} \varphi\bigl( \tau_{(\x_j+\y, t_j + s)} \w \bigr) \,\d \y \, \d s = \E \bigl[\varphi(\w)\bigr]
\end{equation}
as a limit in $L^2(\Pr;\R)$. 
The average integrals are understood as $L^2(\Pr;\R)$-valued Bochner integrals if the radius of the respective domain of integration is non-zero and as point evaluations at $\y=0$ or $s=0$ otherwise.
\end{proposition}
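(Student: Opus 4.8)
The plan is to recast the claimed convergence as an abstract mean ergodic statement on the Hilbert space $H=L^2(\nu;\R)$ and then to eliminate the arbitrary base points $(\x_j,t_j)$ by a distributional argument before invoking Lemma~\ref{lem:abstract_mean_ergo_thm}. Let $\iota\colon H\to L^2(\Pr;\R)$ denote the linear isometry $g\mapsto g\circ\w$ (well defined since $\w$, viewed as an $\mathcal O$-valued random variable, has law $\nu$), and recall from Lemma~\ref{lem:ergo}~a) that $\U(\y,s)g=g\circ\tau_{(\y,s)}$ defines a strongly continuous abelian unitary group on $H$ with $\U(\y,s)\U(\y',s')=\U(\y+\y',s+s')$. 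Since $\varphi(\tau_{(\y,s)}\w)=\iota(\U(\y,s)\varphi)$ and $(\y,s)\mapsto\U(\y,s)\varphi$ is continuous and locally bounded, this already yields the asserted membership in $L^2(\Pr;\R)$, the continuity statement, and the existence of all Bochner integrals. Writing $A_j$ and $T_j$ for the averaging operators $A_jg=\mint{-}_{B_{\mathcal{R}'_j}^{(3)}(\bm0)}\U(\y,\bm0)g\,\d\y$ and $T_jg=\mint{-}_{B_{\mathcal{R}_j}^{(1)}(0)}\U(\bm0,s)g\,\d s$ on $H$ (interpreted as point evaluations when the corresponding radius vanishes), commutativity of the spatial and temporal subgroups and Fubini for Bochner integrals give $A_jT_j=T_jA_j$ and, using $\U(\x_j+\y,t_j+s)=\U(\x_j,t_j)\U(\bm0,s)\U(\y,\bm0)$ together with the fact that the bounded operator $\U(\x_j,t_j)$ pulls out of a Bochner integral,
\[
\mint{-}_{B_{\mathcal{R}_j}^{(1)}(0)}\mint{-}_{B_{\mathcal{R}'_j}^{(3)}(\bm0)}\varphi(\tau_{(\x_j+\y,t_j+s)}\w)\,\d\y\,\d s=\iota\bigl(\U(\x_j,t_j)\,T_jA_j\varphi\bigr).
\]

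Next I would exploit that $\tau_{(\x_j,t_j)}\circ\w$ again has law $\nu$: hence $\iota(\U(\x_j,t_j)g)=g\circ(\tau_{(\x_j,t_j)}\circ\w)$ has the same distribution as $g\circ\w=\iota(g)$ for every $g\in H$. Since $\|X-c\|_{L^2(\Pr;\R)}^2=\E[(X-c)^2]$ with $c:=\E[\varphi(\w)]=\int_{\mathcal O}\varphi\,\d\nu$ depends on $X$ only through its distribution, it suffices to prove that $\iota(T_jA_j\varphi)\to c$ in $L^2(\Pr;\R)$, equivalently --- $\iota$ being isometric --- that $T_jA_j\varphi=A_jT_j\varphi\to c\,\ind$ in $H$, where $\ind$ denotes the constant function $1$. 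This reduction disposes of the base points $(\x_j,t_j)$ and reduces everything to Lemma~\ref{lem:abstract_mean_ergo_thm}.

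To carry out the latter, first note that Riemann--Lebesgue applied to the spectral representation \eqref{eq:cov_mean_ergo} (using Fubini to see that $\kap\mapsto\int_\R e^{i\omega s}\bm F(\kap,\omega)\,\d\omega$ and $\omega\mapsto\int_{\R^3}e^{i\kap\cdot\y}\bm F(\kap,\omega)\,\d\kap$ are integrable) gives $\|\bm C(\y,s)\|\to0$ as $\|\y\|\to\infty$ for each fixed $s$ and as $|s|\to\infty$ for each fixed $\y$. By Lemma~\ref{lem:ergo}~b), c) the subspaces of $H$ fixed by the spatial group $(\U(\y,\bm0))_{\y\in\R^3}$ and by the temporal group $(\U(\bm0,s))_{s\in\R}$ therefore both equal the constants, with orthogonal projection $g\mapsto(\int_{\mathcal O}g\,\d\nu)\,\ind$. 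Now distinguish the two cases. If $L=\infty$, then $A_j\varphi$ converges in $H$: to $c\,\ind$ when $L'=\infty$ (Lemma~\ref{lem:abstract_mean_ergo_thm} for the spatial group, with the F{\o}lner balls $B_{\mathcal{R}'_j}^{(3)}(\bm0)$, $\mathcal{R}'_j\to\infty$, and the constant vector $\varphi$), and to $\phi_\infty:=\mint{-}_{B_{L'}^{(3)}(\bm0)}\U(\y,\bm0)\varphi\,\d\y$ (resp.\ $\varphi$ if $L'=0$) by continuity of $\y\mapsto\U(\y,\bm0)\varphi$ when $L'<\infty$; in either case $\int_{\mathcal O}\phi_\infty\,\d\nu=c$ by translation invariance of $\nu$, and applying Lemma~\ref{lem:abstract_mean_ergo_thm} to the temporal group with the F{\o}lner intervals $B_{\mathcal{R}_j}^{(1)}(0)$, $\mathcal{R}_j\to\infty$, and the convergent sequence $A_j\varphi$ yields $T_jA_j\varphi\to(\int_{\mathcal O}\phi_\infty\,\d\nu)\,\ind=c\,\ind$. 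The case $L<\infty$ (so $L'=\infty$ by hypothesis) is symmetric: $T_j\varphi$ converges in $H$ by continuity of $s\mapsto\U(\bm0,s)\varphi$ and $\mathcal{R}_j\to L$, and Lemma~\ref{lem:abstract_mean_ergo_thm} for the spatial group with $\mathcal{R}'_j\to\infty$ and the convergent sequence $T_j\varphi$ gives $A_jT_j\varphi\to c\,\ind$. In both cases this is the required limit and establishes \eqref{eq:limit_mean_ergo}. (Indices with a vanishing radius, which can occur only finitely often in whichever direction tends to infinity, are simply discarded when applying Lemma~\ref{lem:abstract_mean_ergo_thm}, without affecting the limit.)

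The main obstacle is not the ergodic input itself --- Lemmata~\ref{lem:abstract_mean_ergo_thm} and \ref{lem:ergo} do the heavy lifting --- but organizing the argument so that these lemmas become applicable: the arbitrary base points $(\x_j,t_j)$ must be removed, which is possible only because the quantity to be estimated is a function of the distribution of the average and $\nu$ is invariant under $\tau_{(\x_j,t_j)}$; and the mixed asymptotics $(\mathcal{R}_j,\mathcal{R}'_j)\to(L,L')$ with only one radius forced to infinity must be handled by splitting the double average into commuting spatial and temporal factors, treating the ``finite'' direction by plain continuity of the unitary group and the ``infinite'' one by the mean ergodic theorem with a convergent --- rather than constant --- sequence of test vectors.
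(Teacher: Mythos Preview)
Your proof is correct and follows essentially the same approach as the paper: transfer to $H=L^2(\nu;\R)$ via the isometry, eliminate the base points $(\x_j,t_j)$ by the distributional invariance of $\w$ under $\tau_{(\x_j,t_j)}$ combined with the deterministic limit, verify the covariance decay via Riemann--Lebesgue, and invoke Lemma~\ref{lem:abstract_mean_ergo_thm} with Lemma~\ref{lem:ergo} after a case distinction. The only noteworthy difference is in the case $L=L'=\infty$: the paper applies Lemma~\ref{lem:abstract_mean_ergo_thm} once to the full four-parameter group $(\U(\y,s))_{(\y,s)\in\R^3\times\R}$ with constant $\varphi_j=\varphi$ and F{\o}lner sets $B_{\mathcal R'_j}^{(3)}(\bm0)\times B_{\mathcal R_j}^{(1)}(0)$ (using Lemma~\ref{lem:ergo}~a)), whereas you chain two applications of the one-parameter theorem, first spatial then temporal, feeding the convergent sequence $A_j\varphi$ into the second. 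Your route avoids verifying the F{\o}lner condition for the product sets and unifies the treatment of all subcases, at the mild cost of two invocations instead of one; both are equally valid.
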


\begin{proof}
Observe that the change of variables formula for image measures ensures that the mapping $\psi\mapsto\psi(\w)=\psi\circ\w$ is an isometry from $L^2(\nu;\R)$ to $L^2(\Pr;\R)$. 
This and the strong continuity assertion in Lemma~\ref{lem:ergo}~a) imply that the mapping $(\y,s)\mapsto\varphi(\tau_{(\y,s)}\w)$ is continuous from $\R^3\times\R$ to $L^2(\Pr;\R)$. 
Next note that we may assume without loss of generality that the sequence $((\x_j,t_j))_{j\in\N}\subset\R^3\times\R$ is identically zero. 
Indeed, for every fixed $j\in\N$ the distribution of the double integral in \eqref{eq:limit_mean_ergo} does not depend on the choice of $(\x_j,t_j)$ due to the homogeneity of $\w$, and the limit value on the right hand side of \eqref{eq:limit_mean_ergo} is non-random, so that the claimed $L^2(\Pr;\R)$ convergence is invariant \wrt changing the choice of the sequence $((\x_j,t_j))_{j\in\N}$. 
We are going to establish the convergence  in \eqref{eq:limit_mean_ergo} by applying Lemma~\ref{lem:abstract_mean_ergo_thm} with $H=L^2(\nu;\R)$. 
To this end, observe that \eqref{eq:cov_mean_ergo} and the Riemann-Lebesgue lemma yield that all of the covariance conditions in Lemma~\ref{lem:ergo} are fulfilled. 
With regard to the choice of $n$, $U(\z)$, $\varphi_j$, and $B_j$ in Lemma~\ref{lem:abstract_mean_ergo_thm}, we distinguish the cases 
a) $L=L'=\infty$, b) $L<\infty\,\land\,L'=\infty$, and c) $L=\infty\,\land\,L'<\infty$. 
In case a) we adopt the specifications in Lemma~\ref{lem:ergo}~a) and choose $n=3+1$, $\varphi_j=\varphi$, and $B_j=B_{\mathcal{R}'_j}^{(3)}(\bm{0})\times B_{\mathcal{R}_j}^{(1)}(0)$. 
The convergence in \eqref{eq:limit_mean_ergo} then follows by taking into account the isometry property of the mapping $L^2(\nu;\R)\ni\psi\mapsto\psi(\w)\in L^2(\Pr;\R)$, Fubini's theorem for Bochner integrals, and the identity $P_K\varphi=\int_{\mathcal O}\varphi\,\d\nu= \E [\varphi(\w)]$. 
In case b) we apply Lemma~\ref{lem:abstract_mean_ergo_thm} by adopting the specifications in Lemma~\ref{lem:ergo}~b) and choosing $n=3$, $B_j=B_{\mathcal{R}'_j}^{(3)}(\bm{0})$, and 
\begin{equation*}
\varphi_j = \mint{-}_{B_{\mathcal{R}_j}^{(1)}(0)} \varphi\circ\tau_{(\bm0,s)} \, \d s,
\end{equation*}
where $\mathcal R_j$ is replaced by $L$ if $j=\infty$. 
The average integral is understood as an $L^2(\nu;\R)$-valued Bochner integral whenever the radius $\mathcal R_j$ is non-zero and as a point evaluation at $s=0$ otherwise. 
Case c) is treated analogously, using the specifications in Lemma~\ref{lem:ergo}~c).
\end{proof}

We remark that if the functional $\varphi$ in Proposition \ref{prop:mean_ergo_thm} is a point evaluation of the form $\varphi(\bom) = \phi(\bom(\bm{0},0))$, $\bom \in \mathcal{O}$, with $\phi \in L^2(\Pr_{\w(\bm{0},0)};\R)$, then the integrand in \eqref{eq:limit_mean_ergo} simplifies to $\phi(\w(\x_j+\y, t_j + s))$. 
The following corollary is restricted to point evaluations for ease of exposition. It extends Proposition \ref{prop:mean_ergo_thm} to the inhomogeneous case in the sense that it allows for small perturbations of the distribution parameters of homogeneous fields, represented by the additional  
arguments  
of $\w$.

\begin{corollary}[Inhomogeneous averaging]\label{cor:ergo}
Let $\w = (\w(\x,t,\xbb,\tbb))_{(\x,t),(\xbb,\tbb) \in \R^3 \times \R}$ be an $\R^{d}$-valued, centered, mean-square continuous Gaussian random field with covariance function of the form
\begin{equation}\label{eq:cov_pert_ergo}
\begin{aligned}
\E\bigl[ \w(\x,t,\xbb,\tbb) \otimes \w(\xalt,\talt,\xbbalt,\tbbalt) \bigr]
= \Re\! \int_{\R^3\times\R} \exp\Bigl\{ i \bigl( \kap \cdot (\x-\xalt) + \omega (t-\talt\;\!) \bigr) \Bigr\} \bm{F}\bigl(\kap,\omega;\xbb,\tbb,\xbbalt,\tbbalt\bigr) \,\d(\kap,\omega),
\end{aligned}
\end{equation}
$(\x,t),(\xbb,\tbb),(\xalt,\talt\;\!),(\xbbalt,\tbbalt) \in \R^3 \times \R$, where $\bm{F} \colon (\R^3 \times \R)^3 \to \C^{d \times d}$ is a measurable function satisfying for all $(\xbb,\tbb),(\xbbalt, \tbbalt) \in \R^3 \times \R$ that $\int_{\R^3 \times \R} \| \bm{F}(\kap,\omega;\xbb,\tbb,\xbbalt,\tbbalt) \| \,\d(\kap,\omega) < \infty$. 
In addition, let the function $\phi \in \bigcap_{(\xbb,\tbb) \in \R^3 \times \R} L^2(\Pr_{\w(\bm{0},0,\xbb,\tbb)};\R)$ be such that the random field $( \phi(\w(\x,t,\xbb,\tbb)))_{(\x,t),(\xbb,\tbb) \in\R^3 \times \R}$ is mean-square continuous, 
let $(\mathcal{R}_j)_{j \in \N}$, $(\mathcal{R}'_j)_{j \in \N} \subset \R_0^+$, $L,L' \in [0,\infty]$, and $(\delta_j)_{j\in\N} \subset \R^+$ satisfy that 
$\lim_{j\to\infty} \mathcal{R}_j = L$, $\lim_{j\to\infty} \mathcal{R}'_j = L'$, and 
$\lim_{j\to \infty} \delta_j \mathcal{R}_j = \lim_{j\to \infty} \delta_j \mathcal{R}'_j = 0$, 
and assume that at least one of the limit values $L, L'$ equals infinity. 
Moreover, 
let $((\x_j,t_j))_{j\in \N} \subset \R^3 \times \R$ be an arbitrary sequence and
let $\bm{\alpha}_j \colon \R^3 \times \R \to \R^3$, $\beta_j \colon \R^3 \times \R \to \R$, $j \in \N$, be measurable 
functions of uniform linear growth, i.e., $\sup_{j\in\N}\sup_{(\y,s) \in \R^3 \times \R} (\|\bm{\alpha}_j(\y,s) \| + |\beta_j(\y,s)| ) / ( 1 + \|(\y,s) \| ) < \infty$.
Then, for every $(\xbb,\tbb) \in \R^3 \times \R$ we have that
\begin{equation}\label{eq:limit_pert_ergo}
\begin{aligned}
\lim_{j\to \infty} \;\!\!
\mint{-}_{B_{\mathcal{R}_j}^{(1)}(0)} \mint{-}_{B_{\mathcal{R}'_j}^{(3)}(\bm{0})}
\!\!\phi\Bigl( \w\bigl( \x_j + \y ,\;\! t_j + s ,\;\! \xbb + \delta_j \bm{\alpha}_j(\y,s) ,\;\! \tbb + \delta_j \beta_j(\y,s) \bigr) \Bigr) \;\!\d \y \, \d s 
= \E \Bigl[ \phi\bigl(\w(\bm{0},0,\xbb,\tbb)\bigr)\Bigr]
\end{aligned}
\end{equation}
as a limit in $L^2(\Pr;\R)$. 
The average integrals are understood as $L^2(\Pr;\R)$-valued Bochner integrals if the radius of the respective domain of integration is non-zero and as point evaluations at $\y=0$ or $s=0$ otherwise.
\end{corollary}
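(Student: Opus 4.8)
The plan is to fix $(\xbb,\tbb)\in\R^3\times\R$, freeze the last two arguments of $\w$, and split the claimed convergence into a homogeneous ergodic limit covered by Proposition~\ref{prop:mean_ergo_thm} and a perturbation error that is shown to vanish. Setting $\w_0(\x,t):=\w(\x,t,\xbb,\tbb)$, the representation \eqref{eq:cov_pert_ergo} with $(\xbbalt,\tbbalt)=(\xbb,\tbb)$ shows that $\w_0$ is a centered, mean-square continuous, homogeneous Gaussian random field whose covariance function is of the form \eqref{eq:cov_mean_ergo} with $\bm{F}(\kap,\omega;\xbb,\tbb,\xbb,\tbb)$, which is integrable by hypothesis. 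Applying Proposition~\ref{prop:mean_ergo_thm} to $\w_0$ with the point-evaluation functional $\varphi(\bom)=\phi(\bom(\bm{0},0))$ (recalling $\phi\in L^2(\Pr_{\w(\bm{0},0,\xbb,\tbb)};\R)$ and that at least one of $L,L'$ is infinite) yields
\begin{equation*}
\lim_{j\to\infty}\mint{-}_{B_{\mathcal{R}_j}^{(1)}(0)}\mint{-}_{B_{\mathcal{R}'_j}^{(3)}(\bm{0})}\phi\bigl(\w(\x_j+\y,t_j+s,\xbb,\tbb)\bigr)\,\d\y\,\d s=\E\bigl[\phi(\w(\bm{0},0,\xbb,\tbb))\bigr]
\end{equation*}
in $L^2(\Pr;\R)$. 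It thus remains to show that the difference between this average and the average on the left-hand side of \eqref{eq:limit_pert_ergo} tends to $0$ in $L^2(\Pr;\R)$.

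The crucial structural observation is that the right-hand side of \eqref{eq:cov_pert_ergo} depends on $(\x,t),(\xalt,\talt)$ only through their difference, so the covariance matrix of the centered Gaussian pair $\bigl(\w(\x,t,\xbb',\tbb'),\w(\x,t,\xbb,\tbb)\bigr)$ has blocks $\Re\int\bm{F}(\kap,\omega;\xbb',\tbb',\xbb',\tbb')\,\d(\kap,\omega)$, $\Re\int\bm{F}(\kap,\omega;\xbb',\tbb',\xbb,\tbb)\,\d(\kap,\omega)$, and $\Re\int\bm{F}(\kap,\omega;\xbb,\tbb,\xbb,\tbb)\,\d(\kap,\omega)$, none of which involves $(\x,t)$; hence the joint law of that pair is independent of $(\x,t)$. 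Consequently
\begin{equation*}
g(\xbb',\tbb'):=\bigl\|\phi\bigl(\w(\x,t,\xbb',\tbb')\bigr)-\phi\bigl(\w(\x,t,\xbb,\tbb)\bigr)\bigr\|_{L^2(\Pr;\R)}
\end{equation*}
does not depend on $(\x,t)$, and restricting the mean-square continuity of $(\x,t,\xbb',\tbb')\mapsto\phi(\w(\x,t,\xbb',\tbb'))$ to the slice $(\x,t)=(\bm{0},0)$ gives $g(\xbb,\tbb)=0$ and $\lim_{(\xbb',\tbb')\to(\xbb,\tbb)}g(\xbb',\tbb')=0$.

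It then remains to combine these facts. Since $\bm{\alpha}_j,\beta_j$ are measurable and $\w,\phi$ are mean-square continuous, the integrand $(\y,s)\mapsto\phi(\w(\x_j+\y,t_j+s,\xbb+\delta_j\bm{\alpha}_j(\y,s),\tbb+\delta_j\beta_j(\y,s)))$ is Bochner measurable, and the uniform linear growth of $\bm{\alpha}_j,\beta_j$ keeps the perturbed parameters in a bounded set, so this integrand is bounded in $L^2(\Pr;\R)$ on the bounded integration domain; hence the average integrals in \eqref{eq:limit_pert_ergo} exist, including the point-evaluation cases $\mathcal{R}_j=0$ or $\mathcal{R}'_j=0$. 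By linearity of the Bochner integral, the triangle inequality for Bochner integrals, and the $(\x,t)$-independence established above, the $L^2(\Pr;\R)$-norm of the difference of the two averages is at most
\begin{equation*}
\mint{-}_{B_{\mathcal{R}_j}^{(1)}(0)}\mint{-}_{B_{\mathcal{R}'_j}^{(3)}(\bm{0})}g\bigl(\xbb+\delta_j\bm{\alpha}_j(\y,s),\tbb+\delta_j\beta_j(\y,s)\bigr)\,\d\y\,\d s\le\sup_{\substack{s\in B_{\mathcal{R}_j}^{(1)}(0)\\\y\in B_{\mathcal{R}'_j}^{(3)}(\bm{0})}}g\bigl(\xbb+\delta_j\bm{\alpha}_j(\y,s),\tbb+\delta_j\beta_j(\y,s)\bigr).
\end{equation*}
On this domain $\|(\y,s)\|\le\mathcal{R}_j+\mathcal{R}'_j$, and since at least one of $L,L'$ equals $\infty$ while $\delta_j\mathcal{R}_j,\delta_j\mathcal{R}'_j\to0$ one has $\delta_j\to0$; uniform linear growth then gives $\|\delta_j\bm{\alpha}_j(\y,s)\|+|\delta_j\beta_j(\y,s)|\le C(\delta_j+\delta_j\mathcal{R}_j+\delta_j\mathcal{R}'_j)$ for all $(\y,s)$ in the domain, so the perturbed parameters converge to $(\xbb,\tbb)$ uniformly over the domain. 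Continuity of $g$ at $(\xbb,\tbb)$ with $g(\xbb,\tbb)=0$ then forces the supremum to tend to $0$, which yields \eqref{eq:limit_pert_ergo}. The supplementary statement of Theorem~\ref{thm:ergodicity_inhom} on the one-point correlation tensor follows by the same argument with $\phi(\u)=u_ju_l$. I expect the part requiring the most care to be precisely this uniform-in-$(\y,s)$ control of the perturbation error over the expanding integration domain — which is exactly what the translation invariance of \eqref{eq:cov_pert_ergo} and the coupling $\delta_j\mathcal{R}_j,\delta_j\mathcal{R}'_j\to0$ are designed to provide — whereas the reduction to Proposition~\ref{prop:mean_ergo_thm} and the remaining measurability checks are routine.
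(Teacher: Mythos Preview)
Your proof is correct and follows essentially the same route as the paper: freeze the last two arguments and invoke Proposition~\ref{prop:mean_ergo_thm} for the homogeneous part, then use the translation invariance of \eqref{eq:cov_pert_ergo} in the first two variables to reduce the perturbation error to an average of the function $g(\xbb',\tbb')=\|\phi(\w(\bm0,0,\xbb',\tbb'))-\phi(\w(\bm0,0,\xbb,\tbb))\|_{L^2(\Pr;\R)}$, which vanishes by mean-square continuity and the control $\delta_j(1+\mathcal R_j+\mathcal R'_j)\to0$. The paper states the invariance as $\tau_{(\y,s)}\w\stackrel{\mathrm d}{=}\w$ for the full field, while you extract exactly the pairwise consequence that is needed; your explicit supremum bound is a clean way to make the final step precise. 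One minor point: the closing sentence about the supplementary statement of Theorem~\ref{thm:ergodicity_inhom} does not belong here, since Corollary~\ref{cor:ergo} has no such supplementary assertion.
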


\begin{proof}
Let us first note that the mean-square continuity of $( \phi(\w(\x,t,\xbb,\tbb)))_{(\x,t),(\xbb,\tbb) \in\R^3 \times \R}$ and the measurability and growth assumptions on $\bm{\alpha}$, $\beta$ ensure that the $L^2(\Pr;\R)$-valued Bochner integrals in \eqref{eq:limit_pert_ergo} are defined. 
Moreover, observe that the covariance condition in \eqref{eq:cov_pert_ergo} and the uniqueness theorem for measures imply that $\w$ is homogeneous in the first two variables in the sense that
\begin{align}\label{eq:hom_pert_ergo}
\forall\,(\y,s)\in\R^3\times\R:\, \tau_{(\y,s)}\w \, \stackrel{\mathrm{d}}{=} \, \w,
\end{align}
where $\w$ is interpreted as a random variable with values in the set of all functions from $(\R^3\times\R)^2$ to $\R^d$, the notation $\stackrel{\mathrm{d}}{=}$ stands for equality in distribution, and the translation operators $\tau_{(\y,s)}$ are defined by $(\tau_{(\y,s)}\bom)(\x,t,\xbb,\tbb)=\bom(\x+\y,t+s,\xbb,\tbb)$ for functions $\bom$ from $(\R^3\times\R)^2$ to $\R^d$. 
We point out the notational difference between the random field $\w$ and its possible realizations $\bom$. 
In analogy to the setting of Lemma~\ref{lem:ergo}, the set of all functions from  $(\R^3\times\R)^2$ to $\R^d$ is endowed with the $\sigma$-algebra generated by the point evaluation mappings $\bom\mapsto \bom(\x,t,\xbb,\tbb)$, $(\x,t), (\xbb,\tbb)\in\R^3\times\R$. 
In order to establish the convergence in \eqref{eq:limit_pert_ergo} we decompose the double integral in the form
\begin{equation}\label{eq:dec_pert_ergo}
\begin{aligned}
& \mint{-}\mint{-}
\phi\Bigl( \w\bigl( \x_j + \y ,\;\! t_j + s ,\;\! \xbb ,\;\!\tbb \bigr) \Bigr) \,\d \y \, \d s \\
& + \mint{-}\mint{-} 
\Bigl[\phi\Bigl( \w\bigl( \x_j + \y ,\;\! t_j + s ,\;\! \xbb + \delta_j \bm{\alpha}_j(\y,s) ,\;\! \tbb + \delta_j \beta_j(\y,s) \bigr) \Bigr) - \phi\Bigl( \w\bigl( \x_j + \y ,\;\! t_j + s ,\;\! \xbb ,\;\! \tbb \bigr) \Bigr)\Bigr]  
\,\d \y \, \d s
\end{aligned}
\end{equation}
with domains of integration $B_{\mathcal{R}_j}^{(1)}(0)$, $B_{\mathcal{R}'_j}^{(3)}(\bm{0})$ as in \eqref{eq:limit_pert_ergo}.
For fixed $(\xbb,\tbb)\in\R^3 \times \R$, an application of Proposition~\ref{prop:mean_ergo_thm} to the homogeneous random field $(\w(\x,t,\xbb,\tbb))_{(\x,t) \in \R^3 \times \R}$ and the functional $\varphi$ that assigns to each function $\bom\colon\R^3 \times \R\to\R^d$ the value $\varphi(\bom):=\phi(\bom(\bm0,0))$ ensures the $L^2(\Pr;\R)$ convergence of the first double integral in \eqref{eq:dec_pert_ergo} to $\E [ \phi(\w(\bm{0},0,\xbb,\tbb)) ]$. 
In addition, the homogeneity property \eqref{eq:hom_pert_ergo} yields that the $L^2(\Pr;\R)$ norm of the second double integral in \eqref{eq:dec_pert_ergo} can be estimated from above by
\begin{align*}
\mint{-}_{B_{\mathcal{R}_j}^{(1)}(0)} \mint{-}_{B_{\mathcal{R}'_j}^{(3)}(\bm{0})}
\Bigl\|\phi\Bigl( \w\bigl( \bm0 ,\;\! 0 ,\;\! \xbb + \delta_j \bm{\alpha}_j(\y,s) ,\;\! \tbb + \delta_j \beta_j(\y,s) \bigr) \Bigr) - \phi\Bigl( \w\bigl( \bm0, 0, \xbb, \tbb \bigr) \Bigr)\Bigr\|_{L^2(\Pr;\R)} 
\,\d \y \, \d s,
\end{align*}
which converges to zero as $j\to\infty$ due to the mean-square continuity of $\phi( \w( \bm0, 0, \xbb, \tbb ) )$ in $(\xbb, \tbb)$, the fact that $\lim_{j\to \infty} \delta_j \mathcal{R}_j = \lim_{j\to \infty} \delta_j \mathcal{R}'_j = 0$, and the linear growth property of $\bm{\alpha}_j$ and $\beta_j$.
\end{proof}


\section{Technical calculations concerning the example model}\label{sec:calc_estimates}

Example~\ref{ex:energy_spectrum_correlation_function} presents a closure of our inhomogeneous turbulence model based on the energy spectrum $E$ from Example~\ref{ex:energy_spectrum} and the time integration kernel $\eta$ from Example~\ref{ex:temporal_cor}. 
The following calculations and estimates complete the arguments in Example \ref{ex:energy_spectrum_correlation_function}.
 
First, we analyze the differentiability properties of the energy spectrum $E(\kappa;\zeta)$. 
To this end, set $V=\{(\kappa_1,\kappa_2)\in\R^+\!\times\R^+\colon \kappa_1<\kappa_2\}$ and for every $(\kappa,\kappa_1,\kappa_2)\in\R^+\!\times V$ 
let $\widetilde E(\kappa;\kappa_1,\kappa_2)$ be defined by the right hand side of \eqref{eq:def_E} in Example~\ref{ex:energy_spectrum}. 
Observe that the function $\widetilde E$ is continuously differentiable on $\R^+\!\times V$.
Further recall that the transition wave numbers $\kappa_1=\kappa_1(\zeta)$ and $\kappa_2=\kappa_2(\zeta)$ used in the definition of $E(\kappa;\zeta)$ depend implicitly on $\zeta$ via the integral identities \eqref{eq:energy_spectrum_1}. The integral relations can be reformulated as a nonlinear system for $\kappa_1$ and $\kappa_2$
\begin{align}\label{eq:nonlin_syst_zeta}
\hat{a}_1 \kappa_1^{-2/3}-\hat{b}_1 \kappa_2^{-2/3}= C_\mathrm{K}^{-1}, \quad -\hat{a}_2 \kappa_1^{4/3}+\hat{b}_2 \kappa_2^{4/3}=(2 C_\mathrm{K}\zeta)^{-1}
\end{align}
with positive parameters 
\begin{align*}
\hat{a}_1=\frac{3}{2}+\sum_{j=4}^6\frac{a_j}{j+1}, \quad \hat{a}_2=\frac{3}{4}-\sum_{j=4}^6\frac{a_j}{j+3}, &\quad \hat{b}_1=\frac{3}{2}-\sum_{j=7}^9\frac{b_j}{j-1}, \quad \hat{b}_2=\frac{3}{4}+\sum_{j=7}^9\frac{b_j}{j-3}.
\end{align*}
The condition $0<\zeta < \zeta_\text{crit} =(2 C_\mathrm{K}^3(\hat{b}_2 - \hat{a}_2)(\hat{b}_1-\hat{a}_1)^2)^{-1} \approx 3.86$  ensures that $0<\kappa_1(\zeta) < \kappa_2(\zeta) < \infty$ is fulfilled. Substituting $x=\hat{a}_1 \kappa_1^{-2/3}/ A(\zeta)$ and $y=\hat{b}_1 \kappa_2^{-2/3} /B(\zeta)$,  where $A(\zeta) = \hat{a}_1 (2 C_\mathrm{K}\hat{a}_2 \zeta)^{1/2}$ and
$B(\zeta) = \hat{b}_1 (2 C_\mathrm{K} \hat{b}_2 \zeta)^{1/2}$,  
the system \eqref{eq:nonlin_syst_zeta}  becomes
\begin{align*}
A(\zeta) x - B(\zeta) y = C_\mathrm{K}^{-1}, \quad y^{-2}-x^{-2}=1.
\end{align*}
Hence, the transition wave numbers are 	
\begin{align}\label{eq:func_rel_zeta} 
\kappa_1(\zeta) = \bigg( \frac{\hat{a}_1}{C_\mathrm{K}^{-1} + B(\zeta)y_0(\zeta)}\bigg)^{3/2}, \quad 
\kappa_2(\zeta) =  \bigg( \frac{\hat{b}_1}{B(\zeta)y_0(\zeta)} \bigg)^{3/2},	
\end{align}
where $y_0(\zeta)$ is the unique solution of $( C_\mathrm{K}^{-1} + B(\zeta)y)^2(1-y^2)  - A(\zeta)^2y^2=0$ on $(0,1)$. 
Using the implicit function theorem, it can be shown that the mapping $\zeta \mapsto y_0(\zeta)$ is continuously differentiable on $(0,\zeta_\mathrm{crit})$. 
This in combination with the identity $E(\kappa;\zeta)=\widetilde E(\kappa;\kappa_1(\zeta),\kappa_2(\zeta))$, the differentiability properties of $\widetilde E$, and the functional relation \eqref{eq:func_rel_zeta} establishes the continuous differentiability of $E$ on $\R^+\!\times(0,\zeta_\mathrm{crit})$.

It remains to verify the estimate \eqref{eq:estimate_grad_fxsbb_to_prove_in_app}. 
The definition of $\fxsbb$ in \eqref{eq:def_etabb_fxbb} with the relation $\fx(\kappa;\zeta)= (4 \pi \kappa^2)^{-1}E(\kappa;\zeta)$
implies 
\begin{align*}
\fxsbb(\kap;\y,s) = \frac{\sx^{1/2}(\y,s)}{\sqrt{4\pi} \nkap} E^{1/2} \bigl( \sx(\y,s) \nkap; \sz(\y,s) z \bigr)
\end{align*}
for all $\kap\in\R^3\setminus\{\bm0\}$, $(\y,s)\in\R^3\times\R$.
Observe that the differentiability properties of $E$ investigated above and the regularity assumptions on $\sx$, $\sz$ ensure that the gradient $\nabla_\y\fxsbb(\kap;\y,s)$ exists whenever $(\y,s) \in \R^3 \times \R$.
In this case, extensive but elementary estimates lead to
\begin{align}\nonumber
\bigl\| \nabla_\y \fxsbb(\kap;\y,s) \bigr\|^2 & \leq \frac{3}{16 \pi \nkap^2} \frac{\|\nabla_\y \sx(\y,s)\|^2}{\sx(\y,s)} E \bigl( \sx(\y,s) \nkap; \sz(\y,s) z \bigr) 
\\ \nonumber & \qquad + \frac{3}{16 \pi} \frac{\sx(\y,s) \|\nabla_\y \sx(\y,s)\|^2}{E \bigl( \sx(\y,s) \nkap; \sz(\y,s) z \bigr) } \bigl|(\partial_1 E) \bigl( \sx(\y,s) \nkap; \sz(\y,s) z \bigr)\bigr|^2
\\ \nonumber & \qquad + \frac{3z^2}{16 \pi \nkap^2} \frac{\sx(\y,s) \|\nabla_\y \sz(\y,s)\|^2}{E \bigl( \sx(\y,s) \nkap; \sz(\y,s) z \bigr) } \bigl|(\partial_2 E) \bigl( \sx(\y,s) \nkap; \sz(\y,s) z \bigr)\bigr|^2
\\ \label{eq:estimate_grad_fxsbb} & \leq\frac{1}{4\pi \nkap^2} 
\begin{cases}
c_1(\y,s) \nkap^4, & \nkap < \frac{\kappa_1(\y,s) }{\sx(\y,s)},
\\ c_2(\y,s) \nkap^{-5/3}, & \frac{\kappa_1(\y,s) }{\sx(\y,s)} \leq \nkap \leq \frac{\kappa_2(\y,s) }{\sx(\y,s)},
\\  c_3(\y,s) \nkap^{-7}, & \nkap > \frac{\kappa_2(\y,s) }{\sx(\y,s)},
\end{cases}
\end{align}
with $\kappa_1(\y,s) = \kappa_1(\sz(\y,s)z)$ and $\kappa_2(\y,s) = \kappa_2(\sz(\y,s)z)$. Here,
\begin{align*}
c_1(\y,s) & = \frac{3}{4} C_\mathrm{K} \frac{\sx^3(\y,s)}{\kappa_1^{17/3}(\y,s) } \biggl( \!\! \Bigl( a_4 + 16 a_4^2 \Bigr)  \bigl\| \nabla_\y \sx(\y,s) \bigr\|^2 \!\!+\! \frac{289}{9} a_4^2 z^2 \bigl| \kappa_1'(\y,s) \bigr|^2 \frac{\sx^2(\y,s)}{\kappa_1^2(\y,s)} \bigl\| \nabla_\y \sz(\y,s) \bigr\|^2\biggr),
\\ c_2(\y,s) & = \frac{3}{4} C_\mathrm{K} \frac{34}{9} \sx^{-8/3}(\y,s) \bigl\| \nabla_\y \sx(\y,s) \bigr\|^2,
\\ c_3(\y,s) & = \frac{3}{4} C_\mathrm{K} \frac{\kappa_2^{16/3}(\y,s)}{\sx^{8}(\y,s)} \biggl( \!\! \Bigl( b_7 + 49 b_7^2 \Bigr)  \bigl\| \nabla_\y \sx(\y,s) \bigr\|^2 \!\! + \! \frac{256}{9} b_7^2 z^2 \bigl| \kappa_2'(\y,s) \bigr|^2 \frac{\sx^2(\y,s)}{\kappa_2^2(\y,s)} \bigl\| \nabla_\y \sz(\y,s) \bigr\|^2\biggr) 
\end{align*}
holds with $\kappa_1'(\y,s) = (\frac{\d}{\d \zeta} \kappa_1)(\sz(\y,s)z)$ and $\kappa_2'(\y,s) = (\frac{\d}{\d \zeta} \kappa_2)(\sz(\y,s)z)$. 
Let $(\x,t) \in \R^3 \times \R$ be fixed. Then, by continuity and positivity of $\kappa_1$, $\kappa_2$, $\sz$, and $\sx$ we may choose a radius $r \in \R^+$ such that
\begin{align*}
0 < \inf_{(\y,s) \in B_r(\x,t)} \frac{\kappa_1(\y,s)}{\sx(\y,s)} \leq \sup_{(\y,s) \in B_r(\x,t)} \frac{\kappa_2(\y,s)}{\sx(\y,s)} < \infty.
\end{align*}
For the sake of presentation we introduce the abbreviations 
\begin{align*}
\kappa_1^- = \kappa_1^-(\x,t,r) = \inf_{(\y,s) \in B_r(\x,t)} \frac{\kappa_1(\y,s)}{\sx(\y,s)}, \qquad \kappa_2^+ = \kappa_2^+(\x,t,r) = \sup_{(\y,s) \in B_r(\x,t)} \frac{\kappa_2(\y,s)}{\sx(\y,s)}.
\end{align*}
Observe that the continuity of $\kappa_1$, $\frac{\d}{\d \zeta} \kappa_1$, $\kappa_2$, $\frac{\d}{\d \zeta} \kappa_2$, $\sx$, $\nabla_\x \sx$, $\sz$, and $\nabla_\x \sz$ and the positivity of $\kappa_1$, $\kappa_2$, and $\sx$ imply that the functions $c_1$, $c_2$, and $c_3$ are all bounded on the compact set $B_r(\x,t)$, leading to the finite constants
\begin{align*}
C_1(\x,t,r) = \sup_{(\y,s) \in B_r(\x,t)} c_1(\y,s), \quad  C_3(\x,t,r) = \sup_{(\y,s) \in B_r(\x,t)} c_3(\y,s).
\end{align*}
Further note that the constants $\kappa_1^-$ and $\kappa_2^+$ divide $\R^3$ into the three regions
\begin{equation*}
\{ \nkap < \kappa_1^-\}, \quad \{ \kappa_1^- \leq \nkap \leq \kappa_2^+\}, \quad \{ \nkap > \kappa_2^+\}.
\end{equation*}
For $\kappa_1^- \leq \nkap \leq \kappa_2^+$ and $(\y,s) \in B_r(\x,t)$
it can be shown that $\| \nabla_{\y} \fxsbb(\kap;\y,s) \|^2$ is bounded from above by the constant
\begin{align*}
C_2(\x,t,r) = \max\Bigl\{ C_1(\x,t,r), \sup_{(\y,s) \in B_r(\x,t)} c_2(\y,s),\; C_3(\x,t,r)\Bigr\} \max\bigl\{(\kappa_1^-)^{-7},\, (\kappa_2^+)^4\bigr\}.
\end{align*}
Finally, this and \eqref{eq:estimate_grad_fxsbb} give rise to the estimate
\begin{align}\label{eq:estimate_property_fxsbb}
\sup_{(\y,s) \in B_r(\x,t)} \bigl\| \nabla_{\y} \fxsbb(\kap;\y,s) \bigr\|^2 & \leq\frac{1}{4\pi \nkap^2} 
\begin{cases}
C_1(\x,t,r) \nkap^4, & \nkap < \kappa_1^-,
\\ C_2(\x,t,r), & \kappa_1^- \leq \nkap \leq \kappa_2^+,
\\  C_3(\x,t,r) \nkap^{-7}, & \nkap > \kappa_2^+,
\end{cases}
\end{align}
which proves \eqref{eq:estimate_grad_fxsbb_to_prove_in_app}.


\printbibliography

\end{document}